\documentclass[11pt,letterpaper]{article}

\usepackage{amsmath, amsfonts,amsthm}

\usepackage[english]{babel}

\usepackage{amssymb,graphicx}
\usepackage{verbatim}
\usepackage{mathrsfs}
\usepackage{epsfig}

\usepackage{xcolor}
\usepackage{tikz}
\usepackage{pgfplots}
\usepgfplotslibrary{fillbetween}

\usepackage{url}
\usepackage{bm}
\usepackage[numbers]{natbib}

\usepackage{txfonts,enumitem, comment}

\usepackage[colorlinks,linkcolor=blue,citecolor=blue,anchorcolor=blue]{hyperref}

\newtheorem{theorem}{Theorem}[section]

\newtheorem{lemma}[theorem]{Lemma}
\newtheorem{proposition}[theorem]{Proposition}

\newtheorem{definition}[theorem]{Definition}
\newtheorem{remark}[theorem]{Remark}

\numberwithin{equation}{section}

\DeclareMathOperator*{\esssup}{ess\,sup}

\definecolor{Red}{rgb}{1.00, 0.00, 0.00}

\definecolor{DRed}{rgb}{0.5, 0.00, 0.00}

\definecolor{Blue}{rgb}{0.00, 0.00, 1.00}

\definecolor{Green}{rgb}{0.0, 0.4, 0.0}

\definecolor{Gray}{rgb}{0.7421875,0.7421875,0.7421875}

\definecolor{mycolor1}{rgb}{0.00000,0.44700,0.74100}%
\definecolor{mycolor2}{rgb}{0.85000,0.32500,0.09800}%
\definecolor{mycolor3}{rgb}{0.92900,0.69400,0.12500}%
\definecolor{mycolor4}{rgb}{0.49400,0.18400,0.55600}%
\definecolor{mycolor5}{rgb}{0.46600,0.67400,0.18800}%
\definecolor{mycolor6}{rgb}{0.00000,1.00000,1.00000}%

\allowdisplaybreaks[4]
\pgfplotsset{compat=newest}

\def\beq{\begin{eqnarray}}
\def\eeq{\end{eqnarray}}
\def\be*{\begin{eqnarray*}}
\def\ee*{\end{eqnarray*}}

\def \A{\mathbb{A}}

\def \E{\mathbb{E}}

\def \R{\mathbb{R}}

\newcommand{\Pbb}{\mathbb P}
\newcommand{\1}{\mathbf 1}

\newcommand{\cS}{\mathcal S}
\newcommand{\cT}{\mathcal T}

\newcommand{\dd}{\,\mathrm d}

\def\E{\mathbb{E}}

\def\R{\mathbb{R}}

\def\namedlabel#1#2{\begingroup
	#2%
	\def\@currentlabel{#2}%
	\phantomsection\label{#1}\endgroup
}

\title{\Large \bf
Distribution-constrained Maximum Stopping of Maximum Type
}
\date{\vspace{-5ex}}

\author{Shuoqing Deng
\thanks{
{\small The Hong Kong University of Science and Technology, Department of Mathematics (masdeng@ust.hk). S. Deng is supported by the Hong Kong University of Science and Technology Start-up grant no. R9826 and Hong Kong RGC Early Career Scheme (ECS) under grant no. 26307125. }}%
\and 
Xin Zhang
\thanks{
{\small 
New York University, Department of Finance and Risk Engineering (xz1662@nyu.edu). X. Zhang is supported in part by the National Science
Foundation under grant DMS-2508556. }}
}

\begin{document}


\maketitle

\begin{abstract}


We consider the distribution-constrained optimal stopping problem $\sup_{\tau\sim \mu} \mathbb E[B^*_\tau]$, where $\mu$ is a probability distribution on $\mathbb R_+$, and $(B^*_t)$ denotes the running maximum of a standard Brownian motion. This problem was introduced in \cite{BEES2018}, where a monotonicity principle is used to establish the optimal stopping time as the hitting time of a specific boundary.

In this paper, we characterize this boundary by a variational inequality. 
 In the spirit of \cite{COT}, we provide a \emph{novel} probabilistic representation for the variational inequality as a time-reversed optimal stopping problem. A key ingredient for proving the viscosity solution property and comparison principle is a quantitative estimate near the singular corner of the time-space domain, where the initial and boundary conditions are incompatible. We then prove the optimality of the resulting hitting time through a discrete-time Snell envelope construction and a stability argument for the associated stopping times.

\vspace{0.6 cm}

\noindent{\textbf{Mathematics Subject Classification (2020)}: 60G40,  35R35.}

\vspace{0.2 cm}

\noindent{\textbf{Keywords}: Optimal stopping, Shiryaev's problem, Skorokhod embedding problem, variational inequality, duality.}

\end{abstract}

\section{Introduction}

Let $B=(B_t)_{t\ge0}$ be a standard Brownian motion with $B_0=0$
on a filtered probability space satisfying the usual conditions, and denote its running maximum by 
 $$M_t:=B_t^*:=\sup_{0\le s\le t}B_s.$$  Let  $\mu$ be  an atomless probability measure on $\R_+$, we look for a stopping time $\tau$ for Brownian motion $B$ such that $\tau \sim \mu$ and $\tau$ maximizes $\E[B_\tau^*]$; i.e. we aim to solve
\begin{equation} \label{eq:max_max}
\sup_{\tau \sim \mu}\E[B_\tau^*].
\end{equation}

Such problem was introduced by Beiglb{\"o}ck et al.  \cite{BEES2018}. More generally, it belongs to a  class of the so-called \textit{distribution-constrained optimal stopping problem}, which  was first considered by Bayraktar and Miller \cite{BM2019} when $\mu$ is discrete. It aims at finding  a stopping time $\tau$ for Brownian motion $B$ such that $\tau \sim \mu$ and $\tau$ maximizes or minimizes the objective
$$
\E[U(B_{\cdot \wedge \tau}, \tau)],
$$
for some measurable  function $U$. In the following, we will refer to \eqref{eq:max_max} as the \textit{distribution-constrained maximum stopping of maximum type}, to distinguish with minimization problem $\inf_{\tau \sim \mu}\E[B_t^*]$ which is different in nature, see \cite[Remark 7.1]{BEES2018}.

When $\mu$ is discrete, Bayraktar and Miller \cite{BM2019} solved the distribution-constrained optimal stopping  using the dynamic programming principle (DPP) approach. K\"allblad \cite{K2022} then extended it to general distributions $\mu$ using measure-valued martingale (MVM). The MVM formulation allows the original terminal condition $\tau \sim \mu$ to become an initial condition and  restore the DPP.

Another approach to distribution-constrained optimal stopping was established around the same time by Beiglb\"ock et al. \cite{BEES2018}. They borrowed ideas from the optimal Skorokhod embedding problem (SEP) \cite{BCH2017} and provided a geometric characterization of the optimal stopping time $\tau$. Following their approach, \cite{DengHuang} studied the \textit{Root-type solution} to the distribution-constrained multiple stopping problem, established an time-reversal optimal-stopping representation in the spirit of \cite{COT} and more general optimality results (in terms of both the cost function and the process).

To state our main result, let us introduce
$$Y_t:=B_t-M_t\le0.$$ 
The process $Y$ is a Brownian motion reflected downwards at $0$ on $(-\infty,0]$.
Its generator in the interior is $\frac12\partial_{xx}$, with the
Neumann boundary condition at $x=0$.
Let $\mu$ be an atomless probability measure on $[0,\infty)$, and define 
 $p(t):=\mu((t,\infty))$ for $t\ge0$.
Function $p$ is continuous, nonincreasing with $p(0)=1$, and
$p(t)\to0$ as $t\to\infty$.
The atomlessness assumption ensures that the obstacle $p$ in the
variational inequality is continuous, and  $\Pbb(\tau\ge t)=\Pbb(\tau>t)=p(t)$  for any
stopping time $\tau$ with law $\mu$, which are important for the recovery of the law in Section \ref{sec:rocover_law}.
In \cite{BEES2018}, it was proved that the optimal stopping time is the hitting time of process $Y$, which can also be justified formally by a duality argument as in Remark~\ref{rmk:duality}. For a boundary $b:[0,\infty)\to[-\infty,0]$, let
\begin{equation}\label{eq:tau-b}
 \tau_b:=\inf\{t>0:Y_t\le b(t)\}
       =\inf\{t>0:B_t-B_t^*\le b(t)\},
 \qquad \inf\varnothing:=\infty.
\end{equation}
We are interested in: (i) construct a boundary $\hat b$ from a well-posed variational inequality; (ii) prove $\Pbb(\tau_{\hat b}>t)=p(t)$, hence $\tau_{\hat b}\sim\mu$; (iii) prove that $\tau_{\hat b}$ maximizes $\E[M_\tau]$ among all $\tau\sim\mu$. Actually,  $\tau_{\hat b}$ solves distribution-constrained optimal stopping problems with more general objective functions which are in the form $cM_t+h(B_t-M_t)$, where $c>0$ is a constant and $h$ satisfies suitable conditions; see Remark~\ref{rmk:duality}.

The distribution-constrained optimal stopping problem is closely related to Skorokhod embedding problem (SEP) and Martingale optimal transport (MOT). In particular, it shares the same feature as SEP in that the  solution depends on the cost function: the Root solution \cite{Root1969, COT}, the Rost solution \cite{Rost1976}, the Az{\'e}ma-Yor solution \cite{AY1979,BHR, OS2017}, and the Perkins/Hobson-Pedersen solution \cite{P1985, HP2002} solve different optimal SEP problems. The monotonicity principle of the distribution-constrained stopping in \cite{BEES2018} also share similarity with that of \cite{BCH2017}.  In \cite{DengHuang}, it was established that the \textit{Root-type solution} of distribution-constrained multiple stopping has a similar probabilistic representation as the Root solution of multi-marginal SEP \cite{COT}. 

The distribution-constrained optimal stopping problem can be related to many fundamental problems in mathematical finance, probability, and stochastic control. When first introduced in \cite{BM2019}, Bayraktar and Miller applied the problem to study the model-free super-hedging of a volatility outlook. In \cite{BEES2018}, it was established that the solution to the problem is closely associated to \textit{Shiryaev's problem}, or the inverse first-passage problem. Motivated by the work of Avellaneda and Zhu \cite{AZ2001} on credit-risk modeling, Chen et al.~\cite{CCCS1, CCCS} gave a rigorous characterization of the barrier using a variational inequality. 
In addition, the distribution-constrained stopping problem can also be viewed as an optimal stopping adaptation of the mean-field planning problem of P. L. Lions; see \cite{Lions, RTTY}.


Our main contribution in this work is the following.
First, we analyze the wellposedness of a variational inequality which determines the boundary of the  inverse first passage problem. For proving the existence and uniqueness of the viscosity solution,   inspired by \cite{COT, DengHuang} we introduce a novel probabilistic representation for the variational inequality as a time-reversed optimal stopping problem. In the variational inequality, the initial and boundary conditions are incompatible at the origin.  We address this issue by establishing quantitative bounds that determine an admissible solution class and allow comparison without assigning a value at the origin $(0,0)$.

Second, we give a direct proof of optimality for the constructed stopping time. We first work in discrete time and add a deterministic
function of the stopping time to the reward. The function is chosen in a way so that the corresponding barrier stopping time is optimal for the resulting unconstrained stopping problem. We verify optimality using the Snell envelope and then pass to continuous time through the stability of the associated
stopping times. This provides an alternative to the geometric approach of \cite{BEES2018} without requiring the moment assumption on $\mu$.

\noindent \textbf{Organization of the paper.} In Section \ref{sec:main}, we will first present the main results of this paper. In Section \ref{sec:proof1},  Section \ref{sec:proof2} and Section \ref{sec:proof3}, we will respectively prove the wellposedness of the variational inequality, the recovery of the correct law, and the optimality of the construction.

\section{Main results} \label{sec:main}

\subsection{The  free-boundary problem}

In this section, using similar idea as \cite{AZ2001}, we formally derive the variational inequality related to the boundary of the inverse first passage problem for $Y_t$.

\subsubsection{The PDE formulation}

In this formal derivation, we assume that $b$ is smooth and strictly negative and that
the law of $Y_t$ restricted to $\{\tau_b>t\}$ has a density $u$, namely $\mathbb P(\tau_b>t,Y_t\in\dd x)=u(t,x)\dd x$, for $b(t)<x<0$.
It follows that the density function $u$ satisfies the forward equation 
\begin{equation}\label{eq:density-system}
\begin{cases}
 u_t=\frac12u_{xx},&b(t)<x<0,\\
 u_x(t,0)=0,\quad u(t,b(t))=0,&t>0,\\
 u(0,\cdot)=\delta_0.
\end{cases}
\end{equation}
For $b(t)<x\le0$, define function 
 $F(t,x):=\mathbb P(\tau_b>t,Y_t>x)
        =\int_x^0u(t,y)\dd y$; 
and for $x\le b(t)$ let $F(t,x)=\mathbb P(\tau_b>t)$.
Then, for $b(t)<x<0$, we have
\begin{equation}\label{eq:F-heat}
 F_t(t,x)=\int_x^0u_t(t,y)\dd y
 =\frac12\bigl(u_x(t,0)-u_x(t,x)\bigr)
 =-\frac12u_x(t,x)=\frac12F_{xx}(t,x).
\end{equation}
The boundary conditions are
\[
 F(t,0)=0,\qquad F(t,b(t))=\mathbb P(\tau_b>t),\qquad
 F_x(t,b(t)+)=-u(t,b(t))=0.
\]
Writing the equation for $F$ converts reflection at $0$ into the Dirichlet condition $F(t,0)=0$. When $p(t):=\mu((t,\infty))$ is the target survival function, the
classical free-boundary problem becomes
\begin{equation}\label{eq:classical-FBP}
\begin{cases}
 F_t=\frac12F_{xx},&b(t)<x<0,\ t>0,\\
 F(t,x)=p(t),&x\le b(t),\ t>0,\\
 F(t,0)=0,\quad F_x(t,b(t)+)=0,&t>0,\\
F(0,x)=1,&x<0.
\end{cases}
\end{equation}
The initial and boundary conditions are incompatible at $(0,0)$ because $Y_0=0$.

\subsubsection{Variational inequality formulation}

Similar as \cite{CCCS}, we introduce the variational inequality form of \eqref{eq:classical-FBP}. Since $F=p$ in the stopping region and $p$ is nonincreasing, we have that  $F_t-\frac12F_{xx}=p'(t)\le0$.
 In the continuation region $F<p$ and
\eqref{eq:F-heat} holds.  This motivates the following variational inequality formulation
of the classical free-boundary problem:
\begin{equation}\label{eq:VI-F}
    \max\left\{
       F_t-\frac12F_{xx},\ F-p(t)
    \right\}=0,
    \qquad \mbox{ for all } x<0,\ t>0,
\end{equation}
with the initial and boundary conditions 
\begin{equation}\label{eq:VI-data}
    F(0,x)=1,  
    \qquad
    F(t,0)=0, \qquad \mbox{ for all } x<0,\ t>0.
\end{equation}
The boundary constructed from the variational-inequality solution is denoted by $\hat b$:
\begin{equation}\label{eq:b-stopping-boundary}
    \hat b(t):=\sup\{x\le 0:F(t,x)=p(t)\}\in[-\infty,0],
\end{equation}
and we use the convention $\sup\varnothing=-\infty$.
We distinguish this constructed boundary from a general boundary $b$.
The value $\hat b(t)=-\infty$ means that the stopping region at time $t$ is empty.
We write $\hat\tau:=\tau_{\hat b}$ for the associated first entry time.

\subsection{Wellposedness of the variational inequality}

In this section, we justify the existence and uniqueness of viscosity solution to the system \eqref{eq:VI-F}--\eqref{eq:VI-data}. Inspired by \cite{COT, DengHuang}, we first introduce a candidate probabilistic representation for the solution to the variational inequality, which has the interpretation as the value function of a \textit{time-reversed optimal stopping} problem.
Then we make some estimation of the value function at the singular point $(0,0)$. Finally we justify the backward stopping problem provides the uniques viscosity solution of the variational inequality.

\subsubsection{A time-reversed optimal stopping problem}

Let $W$ be a Brownian motion.  For $x\le 0$, let
\[
    X_s^x:=x+W_s,
    \qquad
    \sigma_0^x:=\inf\{s\ge0:X_s^x=0\}.
\]
The process is sent to a cemetery state after $\sigma_0^x$.  In other words, $X$ is
a Brownian motion absorbed at zero, instead of a reflected Brownian motion.  For $t\ge0$
and $x\le 0$, we introduce: 
\begin{equation}\label{eq:backward-value}
    V(t,x)
    :=
    \inf_{\theta\in\cT,\, 0\le\theta\le t}
    \E\left[
        p(t-\theta) \1_{\{\theta<\sigma_0^x\}}
    \right].
\end{equation}
Here $t$ is the amount of remaining backward time.  Immediate stopping $\theta=0$ costs $p(t)$, and hence $V(t,x) \leq p(t)$. It is clear that $V(0,x)=1$ for any $x<0$, and $V(t,0)=0$ for any $t>0$. Therefore $V$ has initial and boundary conditions \eqref{eq:VI-data}.
\begin{remark}
In the following, we will justify that $V$ is exactly the unique viscosity solution to \eqref{eq:VI-F}--\eqref{eq:VI-data}.
Indeed, it can be seen as an optimal stopping problem with respect to a backward process. The direction of time in \eqref{eq:backward-value} is in consistent with the sign in
\eqref{eq:VI-F}. This probabilistic representation to the inverse first passage problem follows the same spirit as \cite{COT, DengHuang}, see also \cite{CW}.

\end{remark}


\vspace{2mm}

\noindent\textbf{Regularity of $V$ at $(0,0)$}
 
We notice that the initial and boundary conditions are inconsistent at the point $(0,0)$: the initial condition requires $V(0,x)=1$ for $x<0$, whereas the boundary condition gives $V(t,0)=0$ for $t>0$. Consequently, one cannot expect continuity at this point. In the following lemma, we will obtain some estimations of $V$ near $(0,0)$, which will be used later in the comparison argument in the proof of viscosity solution.
For all $t>0,\ x\le 0$, let $H(t,x):=\Pbb(\sigma_0^x>t)
    =2\Phi\!\left(\frac{-x}{\sqrt t}\right)-1$,
where $\Phi$ is the standard normal distribution function.  The function $H$
solves
\[
    H_t=\frac12H_{xx},
    \qquad
    H(t,0)=0,
    \qquad
    \lim_{t\to 0}H(t,x)=1 \mbox{ for all } x<0.
\]
We have the following bounds for the value function $V$, which controls its behavior near  $(0,0)$.
\begin{lemma}\label{lem:corner-bounds}
The value function \eqref{eq:backward-value} satisfies
\begin{equation}\label{eq:corner-bounds}
    p(t)H(t,x)\le V(t,x)\le\min\{p(t),H(t,x)\}.
\end{equation}
Consequently,
\begin{equation}\label{eq:corner-control}
    |V(t,x)-H(t,x)|\le1-p(t)
\end{equation}
whenever $H(t,x)\le p(t)$.
\end{lemma}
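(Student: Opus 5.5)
The bounds in \eqref{eq:corner-bounds} follow by comparing specific stopping times with the structure of the cost in \eqref{eq:backward-value}. Throughout we fix $t>0$ and $x\le 0$.

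\emph{Upper bound.} Two admissible choices suffice. Taking $\theta=0$ gives $V(t,x)\le p(t)$, as already observed after \eqref{eq:backward-value}. Taking $\theta=t$ gives
\[
\E\bigl[p(0)\1_{\{t<\sigma_0^x\}}\bigr]=\Pbb(\sigma_0^x>t)=H(t,x),
\]
using $p(0)=1$. Hence $V(t,x)\le\min\{p(t),H(t,x)\}$.

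\emph{Lower bound.} Let $\theta\in\cT$ with $0\le\theta\le t$. Since $p$ is nonincreasing and $t-\theta\le t$, we have $p(t-\theta)\ge p(t)$. Moreover $\{\theta<\sigma_0^x\}\supseteq\{\sigma_0^x>t\}$, because $\theta\le t$. Therefore
\[
\E\bigl[p(t-\theta)\1_{\{\theta<\sigma_0^x\}}\bigr]\ge p(t)\,\Pbb(\sigma_0^x>t)=p(t)H(t,x).
\]
Taking the infimum over $\theta$ yields $V\ge pH$.

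The only point to check is the convention at the absorption time. If $\sigma_0^x=t$ exactly, this has probability zero for $x<0$, and for $x=0$ both sides vanish, so the inclusion holds up to null sets. The same remark justifies $\Pbb(\sigma_0^x>t)=H(t,x)$.

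\emph{Consequence \eqref{eq:corner-control}.} Assume $H(t,x)\le p(t)$. The upper bound then reads $V\le H$, and the lower bound gives $V\ge pH$. Hence
\[
0\le H-V\le H-pH=(1-p(t))H\le 1-p(t),
\]
since $0\le H\le1$.

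No step is a genuine obstacle. The only care needed is in verifying the measurability and admissibility of the deterministic times $\theta=0$ and $\theta=t$, and the inclusion of events when $\sigma_0^x$ is attained exactly at $t$. Both are routine.
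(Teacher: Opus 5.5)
Your proof is correct and matches the paper's argument: immediate stopping and stopping at the deterministic time $t$ give the upper bound (the paper uses $t\wedge\sigma_0^x$, which yields the same payoff $\mathbf 1_{\{t<\sigma_0^x\}}$), and the inclusion $\{\sigma_0^x>t\}\subseteq\{\theta<\sigma_0^x\}$ together with $p(t-\theta)\ge p(t)$ gives the lower bound. Two minor remarks: the inclusion holds pathwise (since $\theta\le t<\sigma_0^x$), so your null-set discussion is unnecessary; and because $V\le H$ holds unconditionally, your estimate $0\le H-V\le(1-p(t))H\le 1-p(t)$ does not need the hypothesis $H(t,x)\le p(t)$.
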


Motivated by the above lemma, we will restrict in the next section the admissible sub- and supersolutions \(U\) to satisfy
$$
    p(t)H(t,x)\le U(t,x)\le \min\{p(t),H(t,x)\}.
$$
In the comparison argument below, when we consider sub- and supersolutions \(u\) and \(v\) satisfying satisfy the above bounds, it will naturally imply that
\begin{equation}\label{eq:corner-comparison-control}
u(t,x)-v(t,x)
\le (1-p(t))H(t,x)
\le 1-p(t)\to 0
\end{equation}
as \( (t,x) \to (0, 0)\). Consequently, there is no positive gap between \(u(t,x)\) and \(v(t,x)\) as  $(t,x) \to (0, 0)$.

\vspace{2mm}

\noindent\textbf{Dynamic programming principle}

We show the following stopped form of the dynamic programming principle,  which will be useful for the proof of viscosity solution property. We use the convention
\[
    X_s^x=0 \quad\text{for }s\geq\sigma_0^x,
    \qquad V(q,0)=0 \mbox{ for } q>0.
\]

\begin{proposition}\label{lem:local-continuation-DPP}
Let $(t,x)\in(0,\infty)\times(-\infty,0)$, and let $\rho$ be a stopping time such that
$0\leq\rho\leq t$.  Then
\begin{equation}\label{eq:stopped-waiting-inequality}
    V(t,x)
    \leq
    \E\!\left[V(t-\rho,X_\rho^x)
                  \1_{\{\rho<\sigma_0^x\}}\right].
\end{equation}
Moreover, let $Q$ be an open neighborhood of $(t,x)$ such that
\[
    \overline Q\subset(0,\infty)\times(-\infty,0),
    \qquad
    V(q,y)<p(q)\quad\text{for all }(q,y)\in\overline Q.
\]
Define $\nu_Q:=\inf\{s\geq0:(t-s,X_s^x)\notin Q\}$. 
For every sufficiently small $h>0$, let
$\zeta_h:=h\wedge\nu_Q$, one has
\begin{equation}\label{eq:local-continuation-DPP}
    V(t,x)=\E\!\left[V(t-\zeta_h,X_{\zeta_h}^x)\right].
\end{equation}
In particular, $\zeta_h<\sigma_0^x$ almost surely.
\end{proposition}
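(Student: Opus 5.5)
We prove the inequality \eqref{eq:stopped-waiting-inequality} by concatenating stopping times. Then we prove the equality \eqref{eq:local-continuation-DPP} by showing that, inside $Q$, stopping is never strictly optimal.

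\textbf{Step 1: the inequality.} Fix $\varepsilon>0$. On the event $\{\rho<\sigma_0^x\}$ the process sits at $(t-\rho,X^x_\rho)$. We choose an $\varepsilon$-optimal stopping rule $\theta'$ for $V(t-\rho,X_\rho^x)$ that applies to the shifted Brownian motion $W_{\rho+\cdot}-W_\rho$. To make this selection measurable, we use a countable partition of $[0,t]\times(-\infty,0)$ into small cells together with continuity estimates for the cost. A cleaner route is to use the standard strong Markov/conditioning argument for optimal stopping of a Brownian functional, invoking the measurable-selection form of the DPP (as in \cite{COT, DengHuang}).

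On $\{\rho\ge\sigma_0^x\}$ we take $\theta=\rho$; this costs $0$, since $\1_{\{\theta<\sigma_0^x\}}=0$. Otherwise we take $\theta=\rho+\theta'\le t$. Because $\theta\ge\rho$ on $\{\rho<\sigma_0^x\}$ and the absorption time restarts, the strong Markov property gives
\[
\E[p(t-\theta)\1_{\{\theta<\sigma_0^x\}}]\le \E[(V(t-\rho,X^x_\rho)+\varepsilon)\1_{\{\rho<\sigma_0^x\}}].
\]
We then let $\varepsilon\to0$.

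To avoid the measurable-selection issue altogether, we can use the fact that $p$ is continuous and monotone, which makes $V$ jointly continuous away from the corner. This continuity follows from \eqref{eq:corner-bounds} and a coupling in $x$ together with the uniform continuity of $p$ in $t$. With continuity in hand, a discretization of $(t-\rho,X_\rho^x)$ suffices.

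\textbf{Step 2: the equality.} The inequality ``$\le$'' follows from Step~1 applied with $\rho=\zeta_h$. Here $\zeta_h<\sigma_0^x$ a.s., because on $[0,\nu_Q]$ the path stays in $\overline Q\subset\{y<0\}$, and for $h<t$ the time $\zeta_h$ also stays below $t$.

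For ``$\ge$'', take an $\varepsilon$-optimal $\theta$ for $V(t,x)$. The key claim is that we may modify $\theta$ so that $\theta\ge\zeta_h$ with only an $o(1)$ loss. We set $\tilde\theta:=\theta\vee\zeta_h$, or more precisely, on $\{\theta<\zeta_h\}$ we replace stopping by continuing optimally from $(t-\zeta_h,X_{\zeta_h})$. Then
\[
V(t,x)+\varepsilon\ge \E\bigl[p(t-\theta)\1_{\{\theta<\zeta_h\}}\bigr]+\E\bigl[\text{cost after }\zeta_h;\ \theta\ge\zeta_h\bigr].
\]
On $\{\theta<\zeta_h\}$ the pair $(t-\theta,X_\theta)$ lies in $Q$, where $p\ge V$. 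Using Step~1 at the time $\theta$, we get $p(t-\theta)\ge V(t-\theta,X_\theta)\ge \E[V(t-\zeta_h,X_{\zeta_h})\mid\mathcal F_\theta]$. On $\{\theta\ge\zeta_h\}$, the conditional cost after $\zeta_h$ is at least $V(t-\zeta_h,X_{\zeta_h})$ by definition of $V$. Combining the two pieces gives $V(t,x)+\varepsilon\ge\E[V(t-\zeta_h,X_{\zeta_h})]$, and hence equality.

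This part in fact only uses $V\le p$ and Step~1. The strict inequality $V<p$ on $\overline Q$ is needed only to guarantee that $Q$ is a legitimate open set around $(t,x)$ in the continuation region, and that $\zeta_h>0$. Smallness of $h$ ensures $h<t$, so that $t-\zeta_h>0$.

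\textbf{Main obstacle.} The delicate point is Step~1: rigorously pasting $\varepsilon$-optimal controls at a random time. We plan to handle it through the continuity of $V$ on compact subsets of $(0,\infty)\times(-\infty,0)$ combined with a countable-partition construction. The relevant point $(t-\rho,X_\rho)$ could approach the singular corner $(0,0)$, but on that event the cost is controlled by \eqref{eq:corner-control}.
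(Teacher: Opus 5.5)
Your Step~1, and the ``$\le$'' half of Step~2 (take $\rho=\zeta_h$ and note $\1_{\{\zeta_h<\sigma_0^x\}}=1$), are fine and match the paper's pasting argument.

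\textbf{The gap is in the ``$\ge$'' half.} Step~1, applied conditionally at the stopping time $\theta$, yields
$V(t-\theta,X_\theta^x)\le\E[V(t-\zeta_h,X_{\zeta_h}^x)\mid\mathcal F_\theta]$ on $\{\theta<\zeta_h\}$. This is the \emph{opposite} of the second link in your chain
$p(t-\theta)\ge V(t-\theta,X^x_\theta)\ge\E[V(t-\zeta_h,X^x_{\zeta_h})\mid\mathcal F_\theta]$. The two links then point in opposite directions and give nothing. The inequality you actually need is the martingale property of $s\mapsto V(t-s,X^x_s)$ inside $Q$, started from the random point $(t-\theta,X^x_\theta)$. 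That is the very statement being proved, so the argument is circular.

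Your remark that only $V\le p$ and Step~1 are used shows the problem. The same reasoning would prove $V(t,x)=\E[V(t-h,X^x_h)\1_{\{h<\sigma_0^x\}}]$ at points of the stopping set, and this is false. If $\hat b(t)>-\infty$ and $x\le\hat b(t)$, then $V(t,x)=p(t)$. On the other hand, Lemma~\ref{lem:corner-bounds} and the Markov property give $\E[V(t-h,X^x_h)\1_{\{h<\sigma^x_0\}}]\ge p(t-h)H(t,x)$. This exceeds $p(t)$ for $x$ negative enough whenever $p(t-h)>p(t)$. So the strict gap $p(q)-V(q,y)\ge\delta_Q>0$ on $\overline Q$ (from compactness and continuity) is the essential input, not a technicality.

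\textbf{How to close it.} The paper invokes the martingale property of the Snell envelope in the continuation region. Since $V\le p-\delta_Q$ on $\overline Q$, the stopped process $V(t-s\wedge\zeta_h,X^x_{s\wedge\zeta_h})$ is a bounded martingale, and optional sampling gives the identity.

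To keep your hands-on route instead, use $\delta_Q$ to show that an $\varepsilon$-optimal $\theta$ rarely stops before $\zeta_h$.
\begin{enumerate}[label=(\roman*)]
\item On $A:=\{\theta<\zeta_h\}$ the cost is $p(t-\theta)\ge V(t-\theta,X^x_\theta)+\delta_Q$.
\item Paste at $\theta$, on $A$, an $\varepsilon'$-optimal continuation with $\varepsilon'<\delta_Q$; this is exactly the construction of Step~1. It produces an admissible rule with cost at most $V(t,x)+\varepsilon-(\delta_Q-\varepsilon')\Pbb(A)$.
\item Since that cost is at least $V(t,x)$, you get $\Pbb(A)\le\varepsilon/(\delta_Q-\varepsilon')$.
\item Now use $A^c\in\mathcal F_{\zeta_h}$, $\theta\vee\zeta_h=\theta$ on $A^c$, and $0\le V\le1$:
\[
V(t,x)+\varepsilon\ge\E\bigl[p(t-\theta)\1_{\{\theta<\sigma_0^x\}}\1_{A^c}\bigr]\ge\E\bigl[V(t-\zeta_h,X^x_{\zeta_h})\1_{A^c}\bigr]\ge\E\bigl[V(t-\zeta_h,X^x_{\zeta_h})\bigr]-\Pbb(A).
\]
\item Letting $\varepsilon\to0$ gives ``$\ge$''.
\end{enumerate}
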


\subsubsection{Viscosity solution}
We first introduce the definition of viscosity solution in the current context. Then we prove the viscosity property and finally establish the comparison result for sub- and supersolutions.

\begin{definition}\label{def:viscosity}
A bounded continuous function $F$ on
$([0,\infty)\times(-\infty,0])\setminus\{(0,0)\}$ is an admissible viscosity solution of
\eqref{eq:VI-F}--\eqref{eq:VI-data} if:
\begin{enumerate}[label=\textup{(\roman*)}]
\item it satisfies the bounds \(p(t)H(t,x)\le F(t,x)\le\min\{p(t),H(t,x)\}\);
\item $F(t,0)=0$ for $t>0$, and $F(0,x)=1$  for
      $x<0$;
\item whenever $F-\varphi$ has a local maximum at an interior point,
\(\max\left\{\varphi_t-\frac12\varphi_{xx},\ F-p\right\}\le0\);
\item whenever $F-\varphi$ has a local minimum at an interior point,
\( \max\left\{\varphi_t-\frac12\varphi_{xx},\ F-p\right\}\ge0\).
\end{enumerate}
\end{definition}
\begin{remark}
As discussed before, we require no value condition on the point $(0,0)$ in the above definition. The limiting behavior near $(0,0)$ is controlled by the corresponding bounds.
\end{remark}
The following is the main result of this section.
\begin{theorem}\label{thm:wellposedness}
Assume that $p:[0,\infty)\to[0,1]$ is continuous and nonincreasing, with
$p(0)=1$.  Then the value function $V$ in \eqref{eq:backward-value} is the
unique viscosity solution in the sense of Definition~\ref{def:viscosity}.
Moreover:
\begin{enumerate}[label=\textup{(\roman*)}]
\item $x\mapsto V(t,x)$ is nonincreasing for every $t>0$;
\item $\lim_{x\to-\infty}V(t,x)=p(t)$;
\item for each $t$, the set $\{x\le 0:V(t,x)=p(t)\}$ is either empty or a closed ray
      $(-\infty,\hat b(t)]$;
\item the set
\[
    \cS_F:=\{(t,x):t>0,\ V(t,x)=p(t)\}
\]
      is relatively closed in $(0,\infty)\times(-\infty,0]$, and $\hat b$ is upper semicontinuous on $(0,\infty)$ as an extended-real-valued
      function. The first
      entry time \eqref{eq:tau-b} is a stopping time.
\end{enumerate}
\end{theorem}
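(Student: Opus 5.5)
The plan has four parts: show $V$ is continuous and a viscosity solution, prove a comparison principle within the admissible class, and then read off the structural properties (i)--(iv) from the stopping representation \eqref{eq:backward-value}. By Lemma~\ref{lem:corner-bounds} and the remarks after \eqref{eq:backward-value}, $V$ already satisfies conditions (i)--(ii) of Definition~\ref{def:viscosity}.

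\textbf{Continuity.} I will prove continuity of $V$ on $([0,\infty)\times(-\infty,0])\setminus\{(0,0)\}$ by coupling.
\begin{itemize}
\item In $x$: use the same $W$ for two starting points $x<x'$. Then $\{\theta<\sigma_0^{x'}\}\subset\{\theta<\sigma_0^{x}\}$, which gives monotonicity (i). It also gives $|V(t,x)-V(t,x')|\le \Pbb(\sigma_0^{x'}\le t<\sigma_0^{x})$, and this tends to $0$ as $x'\to x$.
\item In $t$: compare $p(t-\theta)$ with $p(t'-\theta)$ using the uniform continuity of $p$ on compacts, after truncating $\theta\le t\wedge t'$.
\item At the boundary pieces $\{t=0,x<0\}$ and $\{x=0,t>0\}$: continuity follows from the bounds \eqref{eq:corner-bounds}, because $H\to1$ as $t\to0$ and $H\to0$ as $x\to0$.
\end{itemize}

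\textbf{Viscosity property.} Take a smooth test function $\varphi$ touching $V$ at an interior point $(t,x)$.
\begin{itemize}
\item Subsolution: since $V\le p$, it remains to show $\varphi_t-\frac12\varphi_{xx}\le0$ when $V-\varphi$ has a local maximum. Apply the inequality \eqref{eq:stopped-waiting-inequality} with $\rho=h\wedge\nu$, where $\nu$ is the exit time from a small ball. Itô's formula applied to $\varphi(t-s,X_s)$, divided by $h$ and with $h\to0$, gives $-\varphi_t+\frac12\varphi_{xx}\ge0$.
\item Supersolution: if $V(t,x)=p(t)$ there is nothing to prove. Otherwise, continuity of $V$ and $p$ gives a neighbourhood $Q$ with $V<p$ on $\overline Q$. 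The equality \eqref{eq:local-continuation-DPP} then gives, by the same Itô argument, $\varphi_t-\frac12\varphi_{xx}\ge0$.
\end{itemize}

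\textbf{Comparison (main obstacle).} Let $u$ be an admissible subsolution and $v$ an admissible supersolution. I want to show $u\le v$. Suppose instead that $\sup(u-v)=\delta>0$. I will rule out each place the supremum could be approached.
\begin{itemize}
\item Near $(0,0)$: by \eqref{eq:corner-comparison-control}, $u-v\le1-p(t)<\delta/2$ for $t<t_0$. So the supremum is not approached there, or anywhere on $\{t<t_0\}$ near $x=0$.
\item On the rest of the boundary: $u-v$ vanishes on $\{t=0\}$ and on $\{x=0\}$. As $x\to-\infty$, both $u$ and $v$ lie in $[p(t)H,p(t)]$ with $H\to1$, so $u-v\to0$ locally uniformly in $t$.
\end{itemize}
Then restrict to a finite horizon $T$ and penalize with $-\varepsilon/(T-t)$. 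The supremum is attained at an interior point, and the standard doubling of variables $\Phi=u(t,x)-v(s,y)-\frac{|t-s|^2+|x-y|^2}{2\alpha}$ applies. If $v(s,y)<p(s)$, the parabolic Ishii lemma gives a contradiction in the usual way. If $v(s,y)=p(s)$, then $u\le p$ forces $u(t,x)-v(s,y)\le p(t)-p(s)\to0$, which contradicts $\delta>0$. Hence $u\le v$, and uniqueness follows. The delicate point is verifying that the maximizers stay a fixed distance from the corner uniformly in $\alpha$. For this I will choose $t_0$ first, using the bound $1-p(t)<\delta/2$.

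\textbf{Structural properties.} Property (i) was proved above. For (ii): $p(t)H(t,x)\le V\le p(t)$ and $H\to1$ as $x\to-\infty$. For (iii): since $V-p$ is continuous, nonincreasing in $x$ and $\le0$, the set $\{x\le 0:V(t,x)=p(t)\}$ is either empty or a closed ray. It does not contain $0$, because $V(t,0)=0<p(t)$ whenever $p(t)>0$; if $p(t)=0$ then $V(t,\cdot)\equiv0$ and the ray is all of $(-\infty,0]$, with $\hat b(t)=0$. For (iv): $\cS_F$ is relatively closed by continuity of $V-p$. Upper semicontinuity of $\hat b$ follows because $\{(t,x):x\le\hat b(t)\}=\cS_F$ is closed. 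The first entry time of the continuous process $(t,Y_t)$ into a closed set is a stopping time under the usual conditions.
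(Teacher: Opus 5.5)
Your plan follows the paper's proof in outline: continuity of $V$; the viscosity property from Proposition~\ref{lem:local-continuation-DPP} and It\^o's formula; comparison after discarding the strip $\{t<t_0\}$ via the diagonal bound $u-v\le(1-p(t))H\le 1-p(t)$; and (i)--(iv) from the coupling and the envelope $pH\le V\le\min\{p,H\}$. You depart from the paper in two technical choices.

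\emph{Spatial continuity.} You use a coupling modulus in $x$ instead of Brownian scaling. The bound $0\le V(t,x)-V(t,x')\le H(t,x)-H(t,x')$ for $x<x'$ is correct. However, it does not follow from $\{\theta<\sigma_0^{x'}\}\subset\{\theta<\sigma_0^x\}$ with a common $\theta$, which only gives monotonicity. To get it, take a near-optimal $\theta\le\sigma_0^{x'}\wedge t$ for $x'$, and on $\{\theta=\sigma_0^{x'}\}$ let the $x$-problem continue until $t\wedge\sigma_0^x$. This modulus is locally uniform in $t$ and the time modulus is locally uniform in $x$, so joint continuity follows; this is slightly more direct than scaling.

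\emph{Localization in the comparison.} You localize in $x$ through the envelope ($u-v\le H$ near $x=0$, and $u-v\le p(1-H)$ as $x\to-\infty$) rather than with $\eta(x^2+z^2)$. The theorem of sums then gives $X\le Y$, and $a-b=\varepsilon/(T-t)^2>0$ yields the contradiction without the paper's extra shift $\kappa(t-\varepsilon)$.

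Three steps need adjusting.

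First, the time estimate is not controlled by the modulus of $p$ alone. Truncating $\theta\le t'$ at $t$ costs an extra $\Pbb(t<\sigma_0^x\le t')=H(t,x)-H(t',x)$. This comes from paths absorbed in $(t,t']$, on which $\theta\wedge t$ still pays $p(0)=1$.

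Second, and more seriously, the claim that maximizers of the doubled function stay away from the corner fails if it is maximized over all $t,s>0$. The bound $1-p(t)$ controls $u-v$ only on the diagonal. Take $t=s\downarrow0$, $x=-c\sqrt t$ and $y=-\sqrt t/c$, with $\Phi$ the normal distribution function. Then $u(t,x)-v(t,y)\ge p(t)\bigl(2\Phi(c)-1\bigr)-\bigl(2\Phi(1/c)-1\bigr)$, which is close to $1$ for large $c$, while $|x-y|^2/(2\alpha)\to0$. So for every fixed $\alpha$ the supremum is approached at $(0,0)$. You must restrict the doubled function to $t,s\ge t_0$. Then show that for small $\alpha$ the maximizers avoid $\{t=t_0\}\cup\{s=t_0\}$: their limit points are diagonal, and $u-v\le 1-p(t_0)<\delta/2$ there. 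This is exactly Steps~1--2 of the paper with $\varepsilon=t_0$.

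Third, $\cS_F$ is only relatively closed in $\{t>0\}$, and its closure may contain $(0,0)=(0,Y_0)$. So $\tau_{\hat b}$ is not simply the entry time of a closed set. Use hitting times after $1/n$ together with right-continuity of the filtration, as the paper does, or the d\'ebut theorem.
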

The proof of this theorem will be delayed to Section \ref{sec:proof_vis}. Here we remark that it induces the optimal stopping rule in the auxiliary time-inversed stopping problem.
\begin{remark}
By Theorem~\ref{thm:wellposedness}, starting from $(t,x)$, stopping at time \(s\) is optimal precisely when $X_s^x\le \hat b(t-s)$.
Hence an optimal stopping time for the auxiliary problem is
$$
    \theta_{t,x}^*
    =
    \inf\{0\le s<t:X_s^x\le \hat b(t-s)\}
    \wedge\sigma_0^x\wedge t,
$$
namely the auxiliary problem encounters the boundary \(\hat b\) backward in time, through \(\hat b(t-s)\). The same boundary will be used forward in time in the inverse first-passage problem.
\end{remark}

\subsection{Recovery of the law}\label{sec:rocover_law}

In the previous section, Theorem \ref{thm:wellposedness} justifies the wellposedness of the variational inequality, but it remains open whether $\Pbb(\tau_{\hat b}>t)=p(t)$, namely $\tau_{\hat b}\sim\mu$.  For an upper-semicontinuous function $b:(0,\infty)\to[-\infty,0]$, define $b^-(t):=\limsup_{s\uparrow t}b(s)$ and introduce
\[
 Q_b:=\{(t,x):t>0,\ b(t)<x<0\}.
\]
We will also distinguish the closed and strict crossing times
\[
 \tau_b:=\inf\{t>0:Y_t\le b(t)\},\qquad
 \tau_b^\circ:=\inf\{t>0:Y_t<b(t)\}.
\]
All infima of empty sets are $+\infty$.

Although $\tau_b$ and $\tau_b^\circ$ need not coincide pathwise for a general boundary $b$, they agree almost surely for the boundary constructed above.

\begin{proposition}\label{prop:closed-strict-crossing}
For every upper-semicontinuous $b:(0,\infty)\to[-\infty,0]$,
$\tau_b=\tau_b^\circ$ almost surely.  In particular this holds for $\hat b$.
\end{proposition}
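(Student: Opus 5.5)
Clearly $\tau_b\le\tau_b^\circ$, so the plan is to show $\Pbb(\tau_b<\tau_b^\circ)=0$. On the event $\{\tau_b<\infty\}$ we split according to what happens at the hitting time $\tau:=\tau_b$. Using upper semicontinuity of $b$ and path continuity of $Y$, the set $\{t>0:Y_t\le b(t)\}$ is relatively closed in $(0,\infty)$. Hence, whenever $\tau\in(0,\infty)$, we have $Y_\tau\le b(\tau)$. We also need to dispose of the case $\tau=0$. Since $Y_0=0$ and $Y$ is continuous, $\tau=0$ forces $b(t_n)\ge Y_{t_n}\to0$ along some $t_n\downarrow0$, and in that situation we will show that $Y$ goes strictly below $b$ immediately. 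The point is that $b\le 0$ and $Y$ takes strictly negative values in every interval $(0,\epsilon)$; this will be handled by the same local argument below.

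The key step is the strong Markov property at $\tau$, applied to the reflected Brownian motion $Y$ and combined with a local oscillation argument. Take first the case $Y_\tau<0$. After time $\tau$, the process $Y$ behaves, until it returns to $0$, like the Brownian motion $W'_s:=B_{\tau+s}-B_\tau$ started at $Y_\tau$. Upper semicontinuity gives $\limsup_{s\downarrow0}b(\tau+s)\le b(\tau)$, but it does not give a lower bound, so $b$ may jump down just after $\tau$. To handle this we use the closed-set formulation: $\tau$ lies in the closed set $\{Y_t\le b(t)\}$, and we must show that $\{Y_t<b(t)\}$ has $\tau$ in its closure from the right. We will use the law of the iterated logarithm for $W'$ at time $0^+$: almost surely $\inf_{s\le h}(W'_s)<-h^{1/2}\cdot(\text{const})$ for all small $h$. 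This pairs with the requirement that $b(\tau+s)\ge b(\tau)-o(1)$ on a set of $s$ accumulating at $0$. That requirement is not automatic, so we split further into two sub-cases. If $\tau$ is a right-accumulation point of $\{t:Y_t\le b(t)\}$, then either one of those points has a strict inequality, or $Y=b$ on a sequence $t_n\downarrow\tau$. If instead $\tau$ is an isolated hitting point from the right, then $\tau$ is attained through the left limit $b^-$.

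The cleanest route avoids pathwise analysis of $b$ and instead uses the following claim. For any fixed deterministic curve, the set of times where $Y_t=b(t)$ but $Y_s\ge b(s)$ for all $s$ in a right neighborhood of $t$ has probability zero of being nonempty. The plan for this claim is to discretize. For rationals $r$, consider $\tau_r:=\inf\{t\ge r:Y_t\le b(t)\}$; each is a stopping time because of closedness and the usual conditions. At each $\tau_r$, the post-$\tau_r$ increment is a fresh Brownian motion independent of $\mathcal F_{\tau_r}$. On $\{\tau_r<\infty\}$ we have $\limsup_{s\downarrow0}b(\tau_r+s)\le b(\tau_r)$, and $W'$ oscillates, with $\inf_{s\le h}W'_s<0$ for every $h>0$ almost surely. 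From these two facts we get $Y_{\tau_r+s}<b(\tau_r+s)$ for some arbitrarily small $s$, so $\tau^\circ_b\le\tau_r$ whenever $\tau_r$ is approached. This oscillation step is the crux, and I expect it to be the main obstacle. The gap is that the $\limsup$ inequality alone only gives $b(\tau_r+s)\le b(\tau_r)+\epsilon$, which is in the wrong direction. I would fix it by noting that when $b(\tau_r+s)\ll b(\tau_r)$ on a whole right interval, $\tau_r$ must equal $r$ or be a left-accumulation hit. In that case the relevant comparison is with $b^-$ from the left, and approaching from the left, $Y$ crossed below $b$ strictly before, by the same oscillation applied at earlier rational starting points. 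Taking the union over rationals $r$ then yields $\tau_b=\tau_b^\circ$ almost surely.

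The case $Y_\tau=0$ requires $b(\tau)=0$. Here the reflection is excursion-driven, and $Y$ makes negative excursions immediately, since $0$ is regular for $(-\infty,0)$. Combined with $\limsup b\le 0$, the same argument then applies. The statement for $\hat b$ follows from Theorem~\ref{thm:wellposedness}(iv), which gives upper semicontinuity of $\hat b$.
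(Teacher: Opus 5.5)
There is a genuine gap, and it sits exactly at the step you flag as the crux; the proposed repair does not close it.

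At $\tau=\tau_b$ with $Y_\tau=b(\tau)<0$ you need $W'_s<b(\tau+s)-b(\tau)$ for arbitrarily small $s>0$. Upper semicontinuity gives no lower bound on the right-hand side. A local argument can handle two sub-cases:
\begin{itemize}
\item If $b(\tau+s_n)-b(\tau)\ge -K\sqrt{s_n}$ along some $s_n\downarrow0$, then conditionally on $\mathcal F_\tau$ each event $\{W'_{s_n}<-K\sqrt{s_n}\}$ has probability $\Phi(-K)$, and Blumenthal's 0-1 law makes the limsup event sure.
\item Strict downward jumps, $\limsup_{s\downarrow t}b(s)<b(t)$, occur only at countably many deterministic $t$, where $\Pbb(Y_t=b(t))=0$. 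You do not use this.
\end{itemize}
What remains is a random $\tau$ with $\limsup_{s\downarrow0}b(\tau+s)=b(\tau)$ but $\limsup_{s\downarrow0}(b(\tau+s)-b(\tau))/\sqrt s=-\infty$. There the negative values of $W'$ need not occur at the times where $b$ is close to $b(\tau)$. Such times can form an uncountable set even for a continuous nonincreasing $b$; minus the Cantor function of a Cantor set of dimension $<1/2$ is an example. So neither oscillation nor a fixed-time argument excludes them.

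Your repair does not address this case, and its key sentence is wrong. Since $\tau$ is the first time with $Y\le b$, we have $Y_t>b(t)$ for all $0<t<\tau$, so there is no earlier strict crossing to invoke. The rational discretization adds nothing either, because $\tau_r=\tau_b$ whenever $r<\tau_b$.

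Your auxiliary claim is also false as stated. For $b\equiv-1$, every left endpoint $g$ of an excursion interval of $Y$ above $-1$ satisfies $Y_g=b(g)$ and $Y\ge b$ on a right neighbourhood of $g$. Finally, the case $\tau=0$ is deferred to the same unproved step.

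What is missing is a global, non-pathwise ingredient. The paper uses $Y\overset{d}{=}-|W|$ to write $\tau_b=\sigma_1\wedge\sigma_{-1}$ and $\tau_b^\circ=\sigma_1^\circ\wedge\sigma_{-1}^\circ$. Here $\sigma_{\pm1}$ and $\sigma^\circ_{\pm1}$ are the closed and strict crossing times of $b$ by the Brownian motions $\pm W$. It then quotes \cite[Proposition~2]{CCCS} for each.

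If you want a self-contained proof of the one-sided statement, use a spatial shift. For $\delta,t'>0$, let $\phi(x)$, resp. $\psi(x)$, be the probability that $x+W_s>b(\delta+s)$, resp. $x+W_s\ge b(\delta+s)$, for all $0<s\le t'$.
\begin{itemize}
\item Then $\phi\le\psi\le\phi(\cdot+\varepsilon)$ for every $\varepsilon>0$, so $\phi=\psi$ off the countable discontinuity set of the nondecreasing function $\phi$.
\item Since $W_\delta$ is atomless, the Markov property at $\delta$ shows that the event $\{W_u\ge b(u)\ \forall u\in(\delta,\delta+t'],\ W_u=b(u)\text{ for some such }u\}$ is null.
\item A countable union over rational $\delta,t'$ covers both $\{0<\tau<\tau^\circ\}$ and $\{\tau=0<\tau^\circ\}$.
\end{itemize}
This shift argument needs the reduction to $\pm W$. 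For the reflected process, a spatial shift no longer separates paths strictly once the reflection has acted.
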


The proof is deferred to Section~\ref{sec:proof2}, where the rough boundary is approximated through a landmark scheme using ideas from \cite{CCCS}.  We now introduce the viscosity problem for a rough reflected barrier.
\begin{definition} \label{def:direct}
Fix $T>0$ and an upper-semicontinuous boundary $b$.  A continuous function
$G$ on $((0,T]\times(-\infty,0])\cup(\{0\}\times(-\infty,0))$ is a viscosity solution
associated with $b$ if:
\begin{enumerate}[label=(D\arabic*)]
\item $0\le G\le1$, $G(t,0)=0$, $G(0,x)=1$ for $x<0$, and
$\lim_{t\downarrow0}\sup_{x\le0}|G(t,x)-H(t,x)|=0$.
\item $x\mapsto G(t,x)$ is nonincreasing, and it is constant on
$(-\infty,b(t)]$ whenever $b(t)>-\infty$.
\item $G_t-\frac12G_{xx}=0$ in $Q_b\cap((0,T)\times(-\infty,0))$.
\item $G_t-\frac12G_{xx}\le0$ on $(0,T)\times(-\infty,0)$.
\end{enumerate}
The equation and inequality in (D3)--(D4) can be understood
in the viscosity sense. When $b(t)=-\infty$,
the constancy requirement is empty, and $G(t,b(t))$ should be understood as $\lim_{x\to-\infty}G(t,x)$.
\end{definition}

Introduce  $G^b(t,x):=\Pbb(\tau_b\ge t,\ Y_t>x)$. The following proposition is an adaptation of 
\cite[Theorem 3]{CCCS} in the current context, which identifies $G^b$ as the viscosity solution associated with boundary $b$.

\begin{proposition}\label{prop:direct}
Suppose $b:(0,\infty)\to[-\infty,0]$ is upper semicontinuous and, for
$0<t\le T$, $b(t)=b^-(t)$. In addition, $\sup_{t\in K}b(t)<0$ for every compact $K\subset(0,T]$.
Assume also $\Pbb(\tau_b>0)=1$.  Then 
 $G^b(t,x)$
is the unique viscosity solution associated with $b$ on $[0,T]$ in the sense of Definition \ref{def:direct}. 
Moreover, $\Pbb(\tau_b=t)=0$ for $0<t\le T$, and
\[
 \Pbb(\tau_b>t)=\lim_{x\to-\infty}G^b(t,x)=G^b(t,b(t)).
\]
\end{proposition}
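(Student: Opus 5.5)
We prove Proposition~\ref{prop:direct} in three parts: $G^b$ satisfies (D1)--(D4), uniqueness via a comparison principle, and the identification of $\Pbb(\tau_b>t)$. Following \cite[Theorem 3]{CCCS}, we first treat regular barriers and then pass to the rough barrier $b$ by a landmark approximation.

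\textbf{Properties of $G^b$.} Since $\Pbb(\tau_b>0)=1$ and $Y_0=0$, we have $G^b(0,x)=1$ for $x<0$. Also $G^b(t,0)=0$ because $Y_t\le 0$. Monotonicity in $x$ is immediate. For the constancy in (D2), note that on $\{\tau_b\ge t\}$ we have $Y_t\ge b(t)$ almost surely. Indeed, $Y_s>b(s)$ for $s<t$, and $b(t)=b^-(t)=\limsup_{s\uparrow t}b(s)$, so continuity of $Y$ gives $Y_t\ge b(t)$. Moreover $\Pbb(Y_t=b(t))=0$, since $Y_t$ has a density for $t>0$. Hence $G^b(t,x)=\Pbb(\tau_b\ge t)$ for $x\le b(t)$.

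For the corner condition in (D1), we use
\[
0\le \Pbb(Y_t>x)-G^b(t,x)\le \Pbb(\tau_b<t)\to 0 \quad\text{as } t\downarrow 0,
\]
together with the fact that $\Pbb(Y_t>x)=\Pbb(|B_t|<-x)=H(t,x)$ by Lévy's theorem.

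For (D3), fix a small rectangle $R$ whose closure lies in $Q_b$ and run the process from inside. Consider $\Pbb(\tau_b\ge t, Y_t>x)$ as a function of the terminal point. Time reversal and the Markov property show that, locally, $G^b$ equals the solution of the heat equation on $R$ with boundary data given by $G^b$ itself. Concretely, $\partial_x G^b=-u$, where $u$ is the killed density. The density $u$ solves the forward heat equation in the interior by Kolmogorov's forward equation for reflected Brownian motion killed outside $Q_b$. The condition $\sup_K b<0$ keeps $R$ away from the reflecting line, except near $x=0$. There the Neumann condition for $u$ turns into the Dirichlet condition $G^b(t,0)=0$, exactly as in \eqref{eq:F-heat}.

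For (D4), the supersolution inequality, we argue by approximation. If $b$ is a step barrier (a landmark scheme), then on each time strip $G^b$ solves the heat equation in the continuation region and equals the constant $\Pbb(\tau_b\ge t)$ below $b$. At the landmark times, killing only decreases $G^b$. This gives $G_t-\frac12G_{xx}\le 0$ across the strips in the viscosity sense.

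\textbf{Approximation of the rough barrier.} Approximate $b$ from above by barriers $b_n\downarrow b$. Such barriers exist by upper semicontinuity, for instance $b_n(t)=\sup_{|s-t|\le 1/n}b(s)$ discretized on dyadic times. Proposition~\ref{prop:closed-strict-crossing} gives $\tau_b=\tau_b^\circ$ almost surely, and this yields $\tau_{b_n}\uparrow\tau_b$ almost surely. Consequently $G^{b_n}\to G^b$, locally uniformly thanks to the monotonicity in $x$ and the continuity of the limit. Viscosity inequalities are stable under such limits, so (D4) passes to $G^b$. Continuity of $G^b$ follows from the continuity of $t\mapsto \Pbb(\tau_b\ge t)$, which in turn follows from $\Pbb(\tau_b=t)=0$ (shown below), together with the smoothing in $x$ from the heat equation.

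\textbf{Uniqueness.} Given two solutions $G_1,G_2$, compare them on $[0,T]$. Both satisfy $\lim_{t\downarrow0}\sup_x|G_i-H|=0$, so $\sup_x(G_1-G_2)(t,\cdot)\to 0$ as $t\downarrow 0$. This neutralizes the corner $(0,0)$, in the same spirit as \eqref{eq:corner-comparison-control}. In $Q_b$ both functions are heat solutions with Dirichlet data $0$ at $x=0$. On the barrier, $G_i(t,b(t))=\lim_{x\to-\infty}G_i(t,x)$, and $G_i$ is constant below $b$. Apply a maximum principle to $G_1-G_2$, doubling variables in $t$ to handle the merely upper-semicontinuous barrier. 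Let $m(t):=\sup_x(G_1-G_2)(t,x)$. Using (D4) below the barrier and (D3) above it, we show that $m$ is nonincreasing, hence $m\le 0$ by the initial condition. Exchanging the roles of $G_1$ and $G_2$ gives uniqueness.

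\textbf{Law identities.} Finally, $\Pbb(\tau_b=t)\le \Pbb(Y_t=b(t))+\Pbb(\tau_b=t,\ Y_t>b(t))$. The first term is zero because $Y_t$ has a density. The second term requires a crossing at $t$ that is not a touch, which contradicts $b(t)=b^-(t)$ and the almost sure equality $\tau_b=\tau_b^\circ$. Therefore $\Pbb(\tau_b>t)=\Pbb(\tau_b\ge t)$, and this common value equals $G^b(t,b(t))=\lim_{x\to-\infty}G^b(t,x)$ by the constancy shown in (D2).

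\textbf{Main obstacle.} I expect the hardest step to be the comparison argument together with the stability of (D4) under approximation, near times where $b$ jumps upward. There the barrier is only upper semicontinuous, and the test-function argument has to use (D2) and the left-continuity $b=b^-$ carefully.
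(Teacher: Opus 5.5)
\textbf{Verdict.} Your treatment of (D1), (D2) and the law identities follows the paper. Your route to (D3)--(D4) is different: you use the forward equation in $Q_b$ plus approximation of $b$ from above, whereas the paper passes to the limit in \eqref{eq:landmark-heat} for the landmark functions $G_n$ via Lemma~\ref{lem:landmark}. Your route is workable in principle.

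\textbf{The gap: uniqueness.} Your sketch (``apply a maximum principle to $G_1-G_2$ \dots\ using (D4) below the barrier and (D3) above it'') gives no contradiction when the penalized maximum of $G_1-G_2$ sits exactly on the barrier, $x_*=b(t_*)$. The other two cases are fine.
\begin{itemize}
\item A maximum with $x_*>b(t_*)$ is excluded, because $G_2$ is a smooth heat solution there by (D3) and $G_1$ is a subsolution by (D4).
\item A maximum with $x_*<b(t_*)$ is excluded, but not by (D4). It is excluded by constancy of both functions on $[x_*,b(t_*)]$ plus a penalty such as $-\varepsilon^2x^2$, which makes moving right profitable.
\end{itemize}
At $x_*=b(t_*)$ none of the three conditions helps. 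The point is not in the open set $Q_b$, so (D3) says nothing about $G_2$. (D2) only describes $G_2(t_*,\cdot)$ to the left. (D4) is a subsolution inequality, which is the wrong direction for the function being subtracted. Doubling variables in $t$ does not touch this obstruction. It is spatial, it comes from (D3) being posed only on the open set $Q_b$, and it is present even for a smooth barrier, not only at upward jumps of $b$.

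\textbf{How the paper handles it.} The paper removes the borderline case with a spatial shift. It maximizes
\[
\Phi(t,x)=U(t,x)-V(t,x+\eta)-\varepsilon(t-t_1)-\varepsilon^2x^2
\quad\text{over } [t_1,t_0]\times(-\infty,-\eta].
\]
Here $t_1$ is so small that both solutions are uniformly $\delta$-close to $H$ at time $t_1$; this is where the uniform corner condition in (D1) enters. The shift $\eta$ is so small that $\sup_{t\in[t_1,t_0]}b(t)<-\eta$ and $\sup_{t\in[t_1,t_0]}U(t,-\eta)<\delta$. Take a positive maximizer, necessarily with $t_*>t_1$ and $x_*<-\eta$. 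There are two cases.
\begin{itemize}
\item If $x_*+\eta>b(t_*)$, then $V(\cdot,\cdot+\eta)$ is a smooth heat solution near $(t_*,x_*)$. Together with (D4) for $U$, this makes $\Phi$ a strict subsolution, which cannot have such a maximum.
\item If $x_*+\eta\le b(t_*)$, then $x_*<b(t_*)$ strictly. $U(t_*,\cdot)$ is constant on $[x_*,b(t_*)]$ and $V(t_*,\cdot+\eta)$ is nonincreasing, so $\Phi(t_*,b(t_*))-\Phi(t_*,x_*)\ge\varepsilon^2(x_*^2-b(t_*)^2)>0$, a contradiction.
\end{itemize}
The shift also moves the lateral boundary to $x=-\eta$, where $\Phi\le\delta$ by continuity of $U$ at $x=0$ and $V(t,0)=0$.

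\textbf{Two smaller points.}
\begin{itemize}
\item Your example $b_n(t)=\sup_{|s-t|\le1/n}b(s)$ breaks down near $t=0$. The hypotheses allow $b(s)\to0$ as $s\downarrow0$, for instance law-of-the-iterated-logarithm barriers with $\Pbb(\tau_b>0)=1$. In that case $b_n=0$ on $(0,1/n]$ and $\tau_{b_n}\to0$, so the approximation must be localized away from $t=0$.
\item The set $\{\tau_b=t,\ Y_t>b(t)\}$ is empty simply because upper semicontinuity makes the hitting set closed, so $Y_{\tau_b}\le b(\tau_b)$ on $\{0<\tau_b<\infty\}$. The identity $\tau_b=\tau_b^\circ$ plays no role there.
\end{itemize}
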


On the above, the condition $b=b^-$ excludes upward jumps that could create an atom in the law of $\tau_b$.  The following is the main result of this section.
\begin{theorem}\label{thm:recovery_law}
Let $p:[0,\infty)\to[0,1]$ be continuous and nonincreasing, with $p(0)=1$.
Let $F$ be the solution in Theorem~\ref{thm:wellposedness}, and set
$\hat b(t):=\sup\{x\le0:F(t,x)=p(t)\}$.  Then
\[
 \Pbb(\tau_{\hat b}>t)=p(t),\qquad t\ge0.
\]
In addition,
\[
 F(t,x)=\Pbb(\tau_{\hat b}>t,Y_t>x),\qquad t>0,\ x\le0.
\]
Consequently $\tau_{\hat b}\sim\mu$ when $p(t)=\mu((t,\infty))$.
\end{theorem}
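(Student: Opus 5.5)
The plan is to identify $F$ (the unique solution from Theorem~\ref{thm:wellposedness}) with $G^{\hat b}$ by showing that $F$ is itself a viscosity solution associated with $\hat b$ in the sense of Definition~\ref{def:direct}. The uniqueness part of Proposition~\ref{prop:direct} then gives $F=G^{\hat b}$. Proposition~\ref{prop:direct} also says that $\tau_{\hat b}$ has no atoms, so $G^{\hat b}(t,x)=\Pbb(\tau_{\hat b}>t,Y_t>x)$, and
\[
\Pbb(\tau_{\hat b}>t)=\lim_{x\to-\infty}F(t,x)=p(t)
\]
by Theorem~\ref{thm:wellposedness}(ii). Both claims follow at once, and $\tau_{\hat b}\sim\mu$ since $p=\mu((\cdot,\infty))$.

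The first step is to check the hypotheses of Proposition~\ref{prop:direct} for $b=\hat b$.
\begin{itemize}
\item Upper semicontinuity of $\hat b$ is Theorem~\ref{thm:wellposedness}(iv).
\item For $\sup_K\hat b<0$ on compacts $K\subset(0,T]$: $F(t,x)\le H(t,x)$, and $H(t,x)<p(t)$ for $x$ close to $0$, uniformly for $t$ in $K$. This holds when $p>0$ on $K$. If $p$ vanishes at some time, I would first treat $T<\inf\{p=0\}$ and afterwards pass $T\uparrow\infty$ by monotonicity; once $p(t)=0$ we have $F\equiv0$ on $\{t\}\times(-\infty,0]$ and $\hat b(t)=0$, which needs separate handling.
\item $\Pbb(\tau_{\hat b}>0)=1$: near $t=0$, $H(t,x)\le p(t)$ forces $\hat b(t)\le -c\sqrt t\,\Phi^{-1}$-type bounds. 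More precisely, $p(t)H(t,x)\le F$ and $F=p$ at $x=\hat b(t)$ imply nothing directly. The upper bound $F\le H$ gives $H(t,\hat b(t))\ge p(t)\to1$, hence $|\hat b(t)|/\sqrt t\to\infty$, and the law of the iterated logarithm for $Y$ then gives $\tau_{\hat b}>0$ a.s.
\end{itemize}

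The second step is to verify (D1)--(D4) for $F$.
\begin{itemize}
\item (D1): the bounds, the boundary data, and the estimate $\sup_x|F-H|\le 1-p(t)\to0$ follow from Lemma~\ref{lem:corner-bounds}, once one also checks the region $H>p$, where $|F-H|\le H-pH\le 1-p$.
\item (D2): monotonicity in $x$ is Theorem~\ref{thm:wellposedness}(i). Constancy of $F=p$ on $(-\infty,\hat b(t)]$ is Theorem~\ref{thm:wellposedness}(iii).
\item (D3): on $Q_{\hat b}$ we have $F<p$, so the viscosity inequalities reduce to the heat equation.
\item (D4): this is the subsolution property (iii) of Definition~\ref{def:viscosity}.
\end{itemize}

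The main obstacle is the hypothesis $\hat b=\hat b^-$, i.e.\ excluding upward jumps of $\hat b$, or equivalently $\hat b(t)=\limsup_{s\uparrow t}\hat b(s)$. Upper semicontinuity already gives $\hat b(t)\ge \limsup_{s\uparrow t}\hat b(s)$, so I need the reverse inequality. Suppose $\hat b(t)>\hat b^-(t)$. Then for $x\in(\hat b^-(t),\hat b(t))$, points $(s,x)$ with $s<t$ close to $t$ lie in the continuation region, where $F$ is caloric. At time $t$ the value drops to $F(t,x)=p(t)$, while $F(s,\cdot)<p(s)$ there.

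I would try to exploit the DPP of Proposition~\ref{lem:local-continuation-DPP} in the backward representation. Starting from $(t,x)$, the optimal rule waits while $X$ stays in the continuation strip $(\hat b^-,0)$ at times $t-s$. Continuity of $p$ and strict inequality on a neighborhood then give $V(t,x)=\E[V(t-\zeta_h,X_{\zeta_h})]$. Expanding, one gets that $V(t,x)$ is a strict average involving values below $p$, which contradicts $V(t,x)=p(t)$ as $h\to0$.

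The delicate point is that this neighborhood is only one-sided in time. To handle it, I would use a heat-equation comparison on the strip $[t-\delta,t]\times(\hat b^-(t)+\epsilon,0)$: compare $p$ with the caloric function $F$ there and invoke the strong maximum principle. If this fails, the fallback is to apply Proposition~\ref{prop:direct} to $\hat b^-$, which is upper semicontinuous and left-continuous, and show $\tau_{\hat b}=\tau_{\hat b^-}$ a.s. via Proposition~\ref{prop:closed-strict-crossing}.
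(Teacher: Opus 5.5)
Your architecture matches the paper's: verify the hypotheses of Proposition~\ref{prop:direct} for $\hat b$, check (D1)--(D4) for $F$, and invoke uniqueness. Several steps are fine. The checks of (D1)--(D4) hold. Your proof of $\sup_K\hat b<0$ via $F\le H<p$ near $x=0$ is a valid substitute for the paper's semicontinuity argument. The vanishing-$p$ case is routine: $\Pbb(\tau_{\hat b}\ge T_*)=\lim_{t\uparrow T_*}p(t)=0$, where $T_*:=\inf\{t:p(t)=0\}$.

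However, two hypotheses of Proposition~\ref{prop:direct} are not established. The first is \emph{initial survival}. Going from $|\hat b(t)|/\sqrt t\to\infty$ to $\Pbb(\tau_{\hat b}>0)=1$ via the law of the iterated logarithm is invalid. Since $M-B$ is a reflected Brownian motion, the LIL only excludes crossings of boundaries with $|b(t)|\ge(1+\delta)\sqrt{2t\log\log(1/t)}$ near $0$. For $b(t)=-\sqrt{t\log\log\log(1/t)}$ one has $|b(t)|/\sqrt t\to\infty$, yet $Y$ crosses $b$ at arbitrarily small times almost surely, so $\tau_b=0$. What $F\le H$ actually yields is $|\hat b(t)|\ge(1+o(1))\sqrt{2t\log(1/(1-p(t)))}$. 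This dominates the LIL envelope only if $1-p(t)$ is at most of order $(\log(1/t))^{-1-\delta}$, which is not assumed; it fails, e.g., for $1-p(t)=1/\log\log(1/t)$.

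The paper instead proves $F\le G^{\hat b}$ (Lemma~\ref{lem:initial-survival}):
\begin{itemize}
\item $F\le H=G_n$ before the first landmark;
\item at a landmark, \eqref{eq:landmark-update} gives $G_n(s+,x)=G_n(s,\hat b(s))\ge p(s)=F(s,x)$ for $x\le\hat b(s)$;
\item between landmarks, $F(t,\cdot)\le\mathsf K_{t-s}F(s,\cdot)$ by \eqref{eq:stopped-waiting-inequality}.
\end{itemize}
Lemma~\ref{lem:landmark} then gives $F\le G^{\hat b}$, hence $\Pbb(\tau_{\hat b}\ge t)\ge p(t)\to1$.

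The second is \emph{left-regularity} $\hat b=\hat b^-$. You rightly call it the crux, but none of your three routes works as stated.
\begin{itemize}
\item DPP route. At $(t,x)$ with $\hat b^-(t)<x\le\hat b(t)$, immediate stopping is optimal, so Proposition~\ref{lem:local-continuation-DPP} gives no continuation identity there. Even if you get $V(t,x)=\E[V(t-\zeta_h,X_{\zeta_h})]$ by letting the starting time increase to $t$, the exit values are only known to lie below $p(t-s)\ge p(t)$. The gap $p-V$ also degenerates as $s\uparrow t$, so nothing contradicts $V(t,x)=p(t)$.
\item Maximum-principle route. Comparing $F$ with $p$ points the wrong way. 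The function $w:=p-F$ satisfies $w_t-\frac12w_{xx}=p'\le0$ in $Q_{\hat b}$, so it is a subsolution and no minimum principle keeps it away from $0$. That is exactly how the stopping region forms. Nor is $F(t,x)=p(t)$ a maximum of $F$ over the backward cylinder.
\item Fallback. A persistent upward jump makes $\hat b^-$ fail to be upper semicontinuous. An isolated upward spike generally makes $\tau_{\hat b}\neq\tau_{\hat b^-}$ with positive probability. Proposition~\ref{prop:closed-strict-crossing} only compares closed and strict crossings of the same boundary, so it cannot bridge this.
\end{itemize}

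The missing idea is to apply a minimum principle to a nonnegative caloric function. The function $u=-F_x$ is caloric and strictly positive in $Q_{\hat b}$, by the strong maximum principle together with $F\ge pH>0$ and $F(s,0)=0$. Take $\hat b^-(t)<a<r<\hat b(t)$. The barrier $me^{-\lambda(s-t+h)}\sin(\pi(y-a)/(r-a))$ on $[t-h,t)\times[a,r]$ bounds $u$ below on $[c,d]\subset(a,r)$ uniformly up to time $t$. Hence $F(t,c)-F(t,d)>0$, contradicting $F(t,\cdot)\equiv p(t)$ on $(-\infty,\hat b(t)]$.

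Alternatively, $F$ is caloric on $(t-2h,t)\times(a,0)$ and continuous up to $s=t$. So $F(t,\cdot)$ is real-analytic on $(a,0)$, and it cannot be constant on $(a,\hat b(t)]$ while vanishing at $x=0$.
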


The proof permits $\hat b=-\infty$, including intervals on which the
target survival function is constant.  It also permits $p$ to vanish at
a finite time.  No continuity or bounded-variation assumption on
$\hat b$ is imposed.

\subsection{Optimality of the construction} \label{sec:optimality}

By Theorem~\ref{thm:recovery_law}, the boundary \(\hat b\) constructed above is upper semicontinuous, and it satisfies
\begin{align}\label{eq:optimal_stopping}
\hat\tau=\tau_{\hat b}=\inf\{t>0:Y_t\le\hat b(t)\}\sim\mu.
\end{align}
Upper semicontinuity makes the time-space stopping region $$\{(t,y) \in [0,\infty) \times (-\infty,0]: \, y \leq \hat b(t) \} $$ closed, so
$\hat\tau$ is a stopping time. Let \(\mathcal T\) denote the set of almost surely finite stopping times, and introduce
$$
    \mathcal T^\mu
    :=
    \{\rho\in\mathcal T:\rho\sim\mu\}.
$$
\begin{theorem}\label{thm:main}
For every stopping time $\rho \in \mathcal{T}^\mu$, $\E[M_\rho]\le \E[M_{\hat\tau}]$. 
Consequently,
\[
\sup_{\rho\in \mathcal{T}^\mu}\E[M_\rho]=\E[M_{\hat\tau}],
\]
where the value $+\infty$ is allowed. 
\end{theorem}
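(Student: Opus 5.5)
The plan is a Lagrangian relaxation: add to the reward a deterministic function of the stopping time, chosen so that $\hat\tau$ is optimal for the resulting \emph{unconstrained} stopping problem. Since every $\rho\in\mathcal T^\mu$ has $\rho\sim\hat\tau\sim\mu$ (Theorem~\ref{thm:recovery_law}), the added term has the same expectation under $\rho$ and under $\hat\tau$. Optimality then transfers to the constrained problem.

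\textbf{Reduction and discrete time.} Write $M_\rho=Y_\rho\cdot(-1)+B_\rho$. For bounded $\rho$ we have $\E[B_\rho]=0$, so $\E[M_\rho]=-\E[Y_\rho]$ and the objective becomes a reward in the Markov process $(t,Y_t)$. I would first work on a grid $t_k=k2^{-n}$, truncated at a horizon $T$. There I look for a function $\lambda_n:\{t_k\}\to\R$ such that $\tau^n:=\inf\{t_k:Y_{t_k}\le b_n(t_k)\}\wedge T$ is optimal for $\sup_\rho\E[-Y_\rho-\lambda_n(\rho)]$. Here $b_n$ is a discretized boundary, and $\lambda_n$ is built recursively backward in time.
\begin{itemize}
\item At each grid time, $\lambda_n(t_k)$ is the multiplier that makes the Snell envelope $Z_k=\max\{-Y_{t_k}-\lambda_n(t_k),\E[Z_{k+1}\mid\mathcal F_{t_k}]\}$ cross the reward exactly at the level $b_n(t_k)$.
\item Because the reward $-y$ is monotone in $y$ and the continuation value is a nicer function of $y$ (conditional expectation of a reflected Brownian motion), the stopping set $\{y\le b\}$ is a lower ray.
\item The threshold $b_n(t_k)$ is chosen so that $\Pbb(\tau^n>t_k)$ matches $p(t_k)$, the discrete analogue of Theorem~\ref{thm:recovery_law}.
\end{itemize}
The Snell envelope then yields $\E[-Y_\rho-\lambda_n(\rho)]\le\E[-Y_{\tau^n}-\lambda_n(\tau^n)]$ for all grid stopping times. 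A duality interpretation as in Remark~\ref{rmk:duality} explains why a ray-shaped region is optimal: the objective $cM+h(Y)$ is monotone in $Y$.

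\textbf{Passage to the constrained problem and to continuous time.} Take $\rho\in\mathcal T^\mu$, truncate it to $\rho\wedge T$, and round it up to the grid, giving $\rho_n$; its law $\mu_n$ converges to that of $\rho\wedge T$. Choose the discrete boundary so that $\tau^n$ has the same discretized law $\mu_n$. Then the $\lambda_n$ terms cancel and $\E[M_{\rho_n}]\le\E[M_{\tau^n}]$.

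Letting $n\to\infty$, $\E[M_{\rho_n}]\to\E[M_{\rho\wedge T}]$ by continuity of paths and dominated convergence, since $M_T$ is integrable. The key stability step is $\tau^n\to\hat\tau\wedge T$ in probability. For this I would show that the discrete value functions (discrete analogues of $V$ in \eqref{eq:backward-value}) converge locally uniformly to $V$, using the uniqueness in Theorem~\ref{thm:wellposedness}. The corner bounds of Lemma~\ref{lem:corner-bounds} control behaviour near $(0,0)$. From this, $\limsup b_n\le\hat b$ and $\liminf b_n\ge\hat b$ in the appropriate semicontinuous sense. Proposition~\ref{prop:closed-strict-crossing} ($\tau_{\hat b}=\tau^\circ_{\hat b}$ a.s.) then converts boundary convergence into convergence of hitting times, because the closed and strict crossing times sandwich the limits. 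Finally, let $T\to\infty$ by monotone convergence: $M_{\rho\wedge T}\uparrow M_\rho$ and $M_{\hat\tau\wedge T}\uparrow M_{\hat\tau}$, with $\rho,\hat\tau<\infty$ a.s. This gives $\E[M_\rho]\le\E[M_{\hat\tau}]$ with the value $+\infty$ allowed and no moment assumption on $\mu$. The reverse inequality for the supremum is trivial since $\hat\tau\in\mathcal T^\mu$.

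\textbf{Main obstacle.} The hardest step is the stability argument: the boundary $\hat b$ is only upper semicontinuous and may equal $-\infty$ on intervals. Showing that the discrete barrier times, constructed with exactly matched discrete laws, converge to $\hat\tau$ requires both uniform convergence of the discrete value functions away from the corner and the a.s.\ coincidence $\tau_{\hat b}=\tau_{\hat b}^\circ$.

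A secondary difficulty is the existence of multipliers $\lambda_n$ that realize a prescribed discrete law. I would handle this by a backward induction using monotonicity of the stopping probability in $\lambda_n(t_k)$ and continuity in the threshold, which holds because $Y_{t_k}$ has an atomless law away from $0$.
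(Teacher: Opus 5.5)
Your architecture matches the paper's. You use a grid Snell envelope with a deterministic multiplier of the stopping time (your reward $-Y$ is the paper's $M$ up to the martingale $B$). Thresholds are chosen forward so the barrier time has the law of the rounded $\hat\tau\wedge T$, multipliers are chosen backward, and then $n\to\infty$ and $T\to\infty$. The paper obtains the stability step $\tau^n\to\hat\tau\wedge T$ in probability from \cite{KlumpKolb2023}, and your replacement for it has a gap.

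Uniform convergence of discrete value functions $V_n\to V$ gives only the \emph{outer} inclusion of contact sets. If $V_n(s_n,x_n)=p(s_n)$ and $(s_n,x_n)\to(t,x)$ with $t>0$, then $V(t,x)=p(t)$, i.e.\ $\limsup_n b_n(s_n)\le\hat b(t)$. The reverse inequality $\liminf b_n\ge\hat b$ does not follow: the functions $V-\varepsilon_n$ converge uniformly to $V$ yet touch the obstacle nowhere. So the upper half of your sandwich, $\limsup_n\tau^n\le\tau^\circ_{\hat b}$, is unsupported.

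You also tacitly identify the law-matched thresholds with the contact boundary of a discrete version of \eqref{eq:backward-value}. This is true but must be shown. For $F_n(t_k,x):=\Pbb(\tau^n>t_k,Y_{t_k}>x)$ one has $F_n(t_k,\cdot)=\min\{p(t_k),\mathsf K_\Delta F_n(t_{k-1},\cdot)\}$. That is exactly the dynamic programming recursion of \eqref{eq:backward-value} with $\theta$ restricted to grid values. Rounding $\theta$ up to the grid then gives $0\le V_n-V\le\omega_T(\Delta)$ directly, without appealing to uniqueness.

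The idea that closes the gap is that you do not need the lower direction at all. The outer inclusion gives $\liminf_n\tau^n\ge\hat\tau\wedge T$ almost surely. Limit points at time $0$ are excluded because $\Pbb(\tau^n\le\epsilon)=\Pbb(\theta_n\le\epsilon)\le\Pbb(\hat\tau\le\epsilon)$. Hence $\E[(\tau^n-\hat\tau\wedge T)^-]\to0$. Since $\tau^n\stackrel d=\theta_n$ and $\theta\le\theta_n\le\theta+\Delta$, also $\E[\tau^n]-\E[\hat\tau\wedge T]\to0$. Together these force $\E|\tau^n-\hat\tau\wedge T|\to0$. With this repair your route is a self-contained substitute for the citation, and Proposition~\ref{prop:closed-strict-crossing} is not needed for this step.

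A smaller point: ``monotone reward, nicer continuation value'' does not yield the ray structure, because $-y$ and the continuation value $C_k(y)$ are both nonincreasing in $y$. What you need is that $y\mapsto y+C_k(y)$ is strictly increasing. This function is the paper's $w_k$. It is obtained by backward induction from the strict monotonicity of $y\mapsto\E[(y+W^*_\Delta)^+]$ and the monotonicity of the reflected flow in its starting point.
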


The proof of above theorem will be postponed to Section \ref{sec:proof3}. We will establish the result through discrete approximations and convergence arguments.  Compared with the approach of \cite{BEES2018}, our proof does not require the moment assumption on $\mu$. In Remark~\ref{rmk:duality}, let us explain intuitively why the solution of constrained optimal stopping is given by a hitting time of $(Y_t)$ as in \eqref{eq:optimal_stopping} from a duality point of view, and why the stopping time $\hat \tau$ is actually optimal for a class of objective functions.

\begin{remark} \label{rmk:duality}
 Similarly to \cite{GHLT, HLST2016}, we can tackle the constrained stopping problem using its dual formulation. Here is a formal derivation. Consider a class of objective functions of the form 
$$
U(x,m)=cm+h(x-m),
$$
where $c$ is a positive constant, $h \in C^2((-\infty,0])$ has a nondecreasing bounded second-order derivative, and that $h'(0) \leq c$. Then it holds that
\begin{align}\label{eq:general_objective}
    \sup_{\rho \in \mathcal{T}^{\mu}}\E[cM_\rho+h(B_{\rho}-M_\rho)]=\E[cM_{\hat \tau}+h(B_{\hat \tau}-M_{\hat \tau})],
\end{align}
whenever $\mu$ has a finite first moment. It includes the running maximum $U(x,m)=m$ when $c=1$ and $h\equiv 0$.

Suppose that the Fenchel-Moreau theorem holds, we have  that
\begin{align*}
    \sup_{\rho \in \mathcal{T}^\mu}\E[ cM_{\rho}+h(B_\rho-M_\rho)]&=\sup_{\rho \in \mathcal{T}} \inf_{\psi \in C_b(\R_+;\,\R)} \E[ cM_{\rho}+h(B_\rho-M_\rho)+\psi(\rho)]-\int_0^\infty \psi(t) \, \mu(dt) \\
    &= \inf_{\psi \in C_b(\R_+; \,\R)} D(\psi) -\int_0^\infty \psi(t) \, \mu(dt), 
\end{align*}
where $D(\psi)$ is an unconstrained optimal stopping problem
\begin{align*}
D(\psi):=\sup_{\rho \in \mathcal{T}}\E[cM_\rho+h(B_\rho-M_\rho)+\psi(\rho)].
\end{align*}
For each $\psi \in C_b(\R_+;\R)$, $D(\psi)- \int_0^\infty \psi(t) \,\mu(dt)$ provides an upper bound of the primal problem. We search for $\hat \psi$ so that the solution $\hat \tau$ to $D(\hat \psi)$ has distribution $\mu$. It is clear that such $(\hat \psi,\hat \tau)$ solves the constrained optimal stopping problem \eqref{eq:general_objective}.

For a dual  $\psi \in C_b(\R_+;\R)$, let
$u^\psi(t,x,m)$ be the value starting from $(B_t,M_t)=(x,m)$, with
$x\le m$,
\begin{align*}
    u^{\psi}(t,x,m)= \sup_{\rho \in \mathcal{T}, \, \, \rho \geq t} \E [ c M_{\rho} +h(B_\rho-M_\rho)+\psi(\rho) \,| \, (B_t,M_t)=(x,m)].
\end{align*}
Note that for $s \geq t$, $M_s - M_t= \left(B_t-M_t+ \sup_{u \leq s-t}(B_{t+u} -B_t) \right)^+$, and hence $M_s -M_t$, $Y_s-Y_t$ are independent of $\sigma(B_{t+u}-B_t: \, u \geq 0 )$ conditional on $Y_t$. Therefore we have the decomposition \(
    u^{\psi}(t,x,m)=cm+h(x-m)+ v^{\psi}(t,x-m)\), where 
\begin{align}\label{eq:vpsi}
    v^{\psi}(t,y):=\sup_{\eta} \E[c(y+W^*_\eta)^+ + h(y+W_\eta- (y+W^*_\eta)^+)-h(y) + \psi(t+\eta)], \quad y \leq 0.
\end{align}
Here $(W_t)$ is a Brownian motion independent of $(B_t)$ that starts with $0$, and $\eta$ ranges over all stopping times of $(W_t)$.

Set $L_s^y=(y+W_s^*)^+$ and $Y_s^y=y+W_s-L^y_s$. Since $dL_s^y$ is supported on $\{s:\, Y^y_s=0 \}$, It\^o's formula gives
\[
 \E\big[cL_s^y+h(Y_s^y)-h(y)\big]
 =(c-h'(0))\E[L_s^y]
   +\frac12\int_0^s\E\big[h''(Y_r^y)\big]\,dr.
\]
Note that $y \mapsto L^y_s$ and $y \mapsto Y^s_y$ are nondecreasing. Since $c \geq h'(0)$ and $y \mapsto h''(y)$ is nondecreasing, $$y \mapsto v^{\psi}(t,y), \qquad y \in (-\infty,0]$$ is  nondecreasing  for any fixed $t \geq 0$.

The immediate stopping yields $u^{\psi}(t,x,m) \geq cm +h(y) +\psi(t)$, which is equivalent to $v^\psi (t,x-m) \geq  \psi(t)$. 
As $y \mapsto v^\psi(t,y)$ is nondecreasing, the stopping region at time $t$ is given by $\{y\leq 0: v^\psi(t,y) \leq \psi(t)\}$.  Therefore the stopping boundary is given by 
\[
 b^\psi(t):=\sup\{y\le0:v^\psi(t,y)-\psi(t)=0\},
 \qquad \sup\varnothing:=-\infty,
\]
and an optimal stopping time for $D(\psi)$ is 
\[
 \tau^\psi:=\inf\{t\ge0:Y_t\le b^\psi(t)\}.
\]
It explains why the optimal stopping time is a hitting time of $(Y_t)$.

By the optimality of $\tau^{\psi}$,  for any $\rho \in \mathcal{T}$
\begin{align}
\E[cM_{\tau^\psi}+h(Y_{\tau^\psi})+\psi(\tau^{\psi})] \geq \E[cM_{\rho}+h(Y_\rho)+\psi(\rho)],
\end{align}
Furthermore if $\tau^{\psi} \sim \mu$ and $\rho \sim \mu$, we get that $\E[cM_{\tau^{\psi}}+h(Y_\tau^{\psi})] \geq \E[cM_{\rho}+h(Y_\rho)]$. Hence $\tau^{\psi}$ solves the constrained optimal stopping problem.

In Section~\ref{sec:proof3}, we prove the result for the case $U(x,m)=m$ using discrete approximation and stability arguments. The proof for general $U(x,m)$ will follow similar arguments.

\end{remark}

\section{Proof of the well-posedness theorem}\label{sec:proof1}

\subsection{Property of the time-reversed stopping problem}

\begin{proof}[Proof of Lemma~\ref{lem:corner-bounds}]
Immediate stopping implies that $V(t,x)\le p(t)$.  Choosing
$\theta=t\wedge\sigma_0^x$, we have
    $p(t-\theta) \1_{\{\theta<\sigma_0^x\}}
    =\1_{\{t<\sigma_0^x\}}$,
hence $V(t,x)\le H(t,x)$.

For the lower bound, fix any admissible $\theta\le t$.  On $\{\sigma_0^x>t\}$ we have $\theta<\sigma_0^x$; since $p$ is nonincreasing, we have
    $p(t-\theta)\ge p(t)$.
Therefore
\[
    \E\left[p(t-\theta) \1_{\{\theta<\sigma_0^x\}}\right]
    \ge p(t)\Pbb(\sigma_0^x>t)
    =p(t)H(t,x).
\]
Taking the infimum over $\theta$ proves \eqref{eq:corner-bounds}.
\end{proof}

\begin{proof}[Proof of Proposition \ref{lem:local-continuation-DPP}]
Fix $\varepsilon>0$. By the strong Markov property and measurable
selection of $\varepsilon$-optimal stopping rules for the optimization
problem at time $\rho$, there exists $\theta_\varepsilon\in\cT$
such that
\[
    \rho\le\theta_\varepsilon\le t,
    \qquad
    \E\!\left[
        p(t-\theta_\varepsilon)
        \1_{\{\theta_\varepsilon<\sigma_0^x\}}
        \,\middle|\,\mathcal F_\rho
    \right]
    \le V(t-\rho,X_\rho^x)+\varepsilon
    \quad\text{on }\{\rho<\sigma_0^x\}.
\]
Since $\theta_\varepsilon\ge\rho$, the conditional expectation
vanishes on $\{\rho\ge\sigma_0^x\}$. Hence
\[
    V(t,x)
    \le
    \E\!\left[
        p(t-\theta_\varepsilon)
        \1_{\{\theta_\varepsilon<\sigma_0^x\}}
    \right] 
    \le
    \E\!\left[
        V(t-\rho,X_\rho^x)\1_{\{\rho<\sigma_0^x\}}
    \right]
    +\varepsilon\,\Pbb(\rho<\sigma_0^x).
\]
Letting $\varepsilon\to0$ yields
\eqref{eq:stopped-waiting-inequality}.

For the second assertion, by the compactness of $\overline Q$ and continuity of
$V-p$, there exists a number $\delta_Q>0$ such that
$V(q,x)\leq p(q)-\delta_Q$, for all $(q,x)\in\overline Q$. Fix $0<h<t$ and let $\zeta_h:=h\wedge\nu_Q$. By continuity
of the paths and the assumptions on $Q$, we have almost surely, $ \zeta_h<\sigma_0^x$, and $(t-s,X_s^x)\in\overline Q$ for $0\le s\le\zeta_h$. 
Consequently,
\[
    p(t-s)-V(t-s,X_s^x)\ge\delta_Q>0,
    \qquad 0\le s\le\zeta_h.
\]
The martingale property of the Snell envelope in the continuation
region therefore implies that
\[
    \left(
        V\bigl(t-(s\wedge\zeta_h),X_{s\wedge\zeta_h}^x\bigr)
    \right)_{0\le s\le h}
    \quad\text{is a bounded martingale}.
\]
Optional sampling yields
    $V(t,x)
    =\E\!\left[V(t-\zeta_h,X_{\zeta_h}^x)\right]$, 
which proves \eqref{eq:local-continuation-DPP}.
\end{proof}

\subsection{Proof of Theorem~\ref{thm:wellposedness}}\label{sec:proof_vis}

Throughout this subsection, $p:[0,\infty)\to[0,1]$ is continuous and
nonincreasing, with $p(0)=1$, and $V$ is defined by
\eqref{eq:backward-value}. We establish four auxiliary results and then
combine them to prove Theorem~\ref{thm:wellposedness}.

We start with the continuity of $V$.
\begin{lemma}\label{lem:V-continuity}
The value function $V$ is continuous on
$([0,\infty)\times(-\infty,0])\setminus\{(0,0)\}$, with
\[
 V(t,0)=0\quad(t>0),\qquad V(0,x)=1\quad(x<0).
\]
\end{lemma}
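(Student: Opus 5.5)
The boundary and initial values are immediate from the definition \eqref{eq:backward-value}. For $t>0$ and $x=0$ we have $\sigma_0^0=0$, so the indicator vanishes for every admissible $\theta$ and $V(t,0)=0$. For $t=0$ the only admissible choice is $\theta=0$, and $\sigma_0^x>0$ a.s. for $x<0$, which gives $V(0,x)=p(0)=1$. It remains to prove continuity, which I will split into continuity in $x$ uniformly in $t$, continuity in $t$ locally uniformly in $x$ on compacts away from the corner, and the behaviour near the two edges.

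\emph{Continuity in $x$.} Couple the processes through the same Brownian motion $W$. For $y<x\le 0$, the process $X^y$ must pass through level $x$ before reaching $0$, and $\sigma_0^y-\sigma_0^x$ is distributed as the hitting time of level $x-y$ by an independent Brownian motion. For any fixed $\theta\le t$ the payoffs then differ only on $\{\sigma_0^x\le\theta<\sigma_0^y\}$, so
\[
\bigl|\E[p(t-\theta)\1_{\{\theta<\sigma_0^y\}}]-\E[p(t-\theta)\1_{\{\theta<\sigma_0^x\}}]\bigr|\le \Pbb(\sigma_0^x\le t<\sigma_0^y)\le H(t,y)-H(t,x).
\]
This bound is uniform over $\theta$, hence it passes to the infimum and gives $|V(t,x)-V(t,y)|\le H(t,y)-H(t,x)$. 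The function $H$ is continuous on $(0,\infty)\times(-\infty,0]$ and satisfies $H(t,y)-H(t,x)\le 2\Phi(|x-y|/\sqrt{t})-1$, so this modulus is uniform in $t\ge t_0>0$.

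\emph{Continuity in $t$.} Let $0<s<t$. Given $\theta\le t$ that is $\varepsilon$-optimal for $V(t,x)$, I set $\theta'=\theta\wedge s$. Then $t-\theta$ and $s-\theta'$ both lie in $[0,t]$, and they differ by at most $t-s$ except on $\{\theta>s\}$, where $s-\theta'=0$. Let $\omega_p$ denote the uniform modulus of $p$ on $[0,t]$. Since $p$ is nonincreasing, on $\{\theta\le s\}$ the payoff changes by at most $\omega_p(t-s)$. On $\{\theta>s\}$ the gap is controlled by $p(0)-p(t-\theta)\le\omega_p(t-s)$, because $t-\theta<t-s$. The indicator $\1_{\{\theta'<\sigma_0^x\}}$ differs from $\1_{\{\theta<\sigma_0^x\}}$ only on $\{s\le\sigma_0^x\le\theta\}$, whose probability is at most $\Pbb(s\le\sigma_0^x\le t)=H(s,x)-H(t,x)$. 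This yields $V(s,x)\le V(t,x)+\omega_p(t-s)+H(s,x)-H(t,x)+\varepsilon$.

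For the reverse inequality, I extend an $\varepsilon$-optimal $\theta'\le s$ for $V(s,x)$ by $\theta=\theta'+(t-s)$ on the event $\{\theta'<\sigma_0^x\}$, with a symmetric error estimate; equivalently, I apply the DPP inequality \eqref{eq:stopped-waiting-inequality} with $\rho=t-s$. Either way, I plan to combine the two bounds into
\[
|V(t,x)-V(s,x)|\le \omega_p(|t-s|)+|H(t,x)-H(s,x)|.
\]
Together with the $x$-estimate this gives joint continuity on $(0,\infty)\times(-\infty,0]$, including the edge $x=0$.

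\emph{The edge $t=0$.} The main difficulty is continuity at points $(0,x_0)$ with $x_0<0$, where the uniform-in-$t$ moduli above degenerate. Here I will use Lemma~\ref{lem:corner-bounds}: $p(t)H(t,x)\le V(t,x)\le\min\{p(t),H(t,x)\}$. As $(t,x)\to(0,x_0)$ we have $p(t)\to1$, and $H(t,x)\to1$ because $-x/\sqrt t\to+\infty$. Squeezing therefore gives $V(t,x)\to1=V(0,x_0)$. The only excluded point is the corner $(0,0)$, where $H$ has no limit; this is exactly why it is removed from the domain.
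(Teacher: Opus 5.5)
Your boundary values, your continuity-in-$t$ estimate (the same truncation $\theta\wedge s$ the paper uses to get $0\le V(t,x)-V(t+h,x)\le\omega_T(h)+H(t,x)-H(t+h,x)$), and your squeeze at the edge $t=0$ via Lemma~\ref{lem:corner-bounds} are all fine. \textbf{The step that fails is continuity in $x$.} For a fixed $\theta\le t$ the two payoffs do differ only on $\{\sigma_0^x\le\theta<\sigma_0^y\}$. But this event is not contained in $\{\sigma_0^x\le t<\sigma_0^y\}$, because $\theta<\sigma_0^y$ does not imply $t<\sigma_0^y$.

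The claimed bound is in fact false uniformly in $\theta$. Take $\theta=t\wedge\sigma_0^x$ and $y<x$. Then $\theta<\sigma_0^y$ pathwise, so the payoffs differ by $\E[p(t-\sigma_0^x)\1_{\{\sigma_0^x\le t\}}]\ge p(t)\,(1-H(t,x))$, which does not vanish as $y\uparrow x$. So you cannot get $|V(t,x)-V(t,y)|\le H(t,y)-H(t,x)$ by using one $\theta$ for both starting points and then taking infima. Without a spatial modulus, your interior joint continuity is unproved.

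The estimate itself is true, but it needs a one-sided argument with a modified rule.
\begin{itemize}
\item The same coupling makes the payoff from $y$ at least the payoff from $x$ for every $\theta$, hence $V(t,y)\ge V(t,x)$.
\item Conversely, take $\theta$ $\varepsilon$-optimal for $V(t,x)$. For the start $y$, use $\theta':=\theta$ on $\{\theta<\sigma_0^x\}$ and $\theta':=t\wedge\sigma_0^y$ on $\{\theta\ge\sigma_0^x\}$. This is a stopping time, because that event lies in $\mathcal F_{\theta\wedge\sigma_0^x}$ and $t\wedge\sigma_0^y\ge\theta\wedge\sigma_0^x$.
\item Since $p(0)=1$, the cost of $\theta'$ from $y$ is exactly the cost of $\theta$ from $x$ plus $\Pbb(\sigma_0^x\le\theta,\ t<\sigma_0^y)\le\Pbb(\sigma_0^x\le t<\sigma_0^y)=H(t,y)-H(t,x)$.
\end{itemize}
With this repair your route works and even gives an explicit spatial modulus uniform in $t\ge t_0$.

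The paper avoids spatial coupling entirely. Brownian scaling gives $|V(ct,\sqrt c\,x)-V(t,x)|\le\omega_T(|c-1|t)$, which turns a change in $x$ into a time perturbation of $p$; combined with continuity in $t$, this yields interior joint continuity. The edge $x=0$ then follows from $V\le H\le\sqrt{2/(\pi\varepsilon)}\,(-x)$.

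A minor point: the DPP inequality with $\rho=t-s$ is not equivalent to what you need. It compares $V(t,x)$ with $V(s,\cdot)$ at the random point $X^x_{t-s}$, not with $V(s,x)$. The inequality $V(t,x)\le V(s,x)$ for $s<t$ follows directly from $p$ being nonincreasing, or from your delayed rule $\theta'+(t-s)$, which costs nothing.
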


\begin{proof}
Fix $T>0$ and let
\[
\omega_T(h):=
\sup_{\substack{u,v\in[0,T]\\ |u-v|\le h}}
|p(u)-p(v)|.
\]
Since $p$ is continuous, $\omega_T(h)\to0$ as $h\to0$.

We first prove continuity in time. Fix $x<0$ and
$0<t<t+h\le T$. Since $p$ is nonincreasing and every stopping
time bounded by $t$ is also bounded by $t+h$, we have
$V(t+h,x)\le V(t,x)$.
Conversely, let $\theta\le t+h$ be a stopping time and let
$\bar\theta:=\theta\wedge t$. Then, pathwise,
\[
p(t-\bar\theta)\mathbf1_{\{\bar\theta<\sigma_0^x\}}
\le
p(t+h-\theta)\mathbf1_{\{\theta<\sigma_0^x\}}
+\omega_T(h)
+\mathbf1_{\{t<\sigma_0^x\le t+h\}}.
\]
Indeed, on $\{\theta\le t\}$ the arguments of $p$ differ by $h$;
on $\{t<\theta<\sigma_0^x\}$, the difference between
the two payoffs is
$1-p(t+h-\theta)\le\omega_T(h)$;
on $\{t<\sigma_0^x\le\theta\}$, the l.h.s. is bounded by 
$\mathbf1_{\{t<\sigma_0^x\le t+h\}}$.
Taking expectations and then the infimum over $\theta\le t+h$, we obtain
\[
0\le V(t,x)-V(t+h,x)
\le
\omega_T(h)+H(t,x)-H(t+h,x),
\]
where $H(t,x)=\mathbb P(\sigma_0^x>t)$.
The continuity of $H(\cdot,x)$ on $(0,\infty)$ implies that
$V(\cdot,x)$ is continuous on $(0,\infty)$.

We next use the scaling property of Brownian motion to obtain joint continuity in the
interior. For $c>0$, scaling time by $c$, space by $\sqrt c$,
and the admissible stopping times accordingly yields
\[
V(ct,\sqrt c\,x)
=
\inf_{\substack{\theta\in\mathcal T\\0\le\theta\le t}}
\mathbb E\left[
p\bigl(c(t-\theta)\bigr)
\mathbf1_{\{\theta<\sigma_0^x\}}
\right].
\]
Consequently, whenever $t,ct\le T$, we have
$|V(ct,\sqrt c\,x)-V(t,x)|
\le \omega_T(|c-1|t)$.
Let $(t_n,x_n)\to(t,x)$ with $t>0$ and $x<0$, and let
$c_n:=\frac{x_n^2}{x^2}$, 
$s_n:=\frac{t_n}{c_n}$.
For all sufficiently large $n$, we have $x_n<0$ and
$(t_n,x_n)=(c_ns_n,\sqrt{c_n}\,x)$, 
$c_n\to1$, $s_n\to t$.
Choosing $T$ large enough that $t_n,s_n\le T$ for all sufficiently
large $n$, we obtain
\[
\begin{aligned}
|V(t_n,x_n)-V(t,x)|
&\le
|V(c_ns_n,\sqrt{c_n}\,x)-V(s_n,x)|
+|V(s_n,x)-V(t,x)|\\
&\le
\omega_T(|t_n-s_n|)
+|V(s_n,x)-V(t,x)|
\longrightarrow0.
\end{aligned}
\]
Thus $V$ is jointly continuous on
$(0,\infty)\times(-\infty,0)$.

Finally, by Lemma~\ref{lem:corner-bounds}, we have
$
p(t)H(t,x)\le V(t,x)\le H(t,x)$, and
$H(t,x)=2\Phi\left(\frac{-x}{\sqrt t}\right)-1$.
For $0<\varepsilon<T$, it follows that
\[
\sup_{t\in[\varepsilon,T]}|V(t,x)|
\le
\sqrt{\frac{2}{\pi\varepsilon}}\, -x)
\longrightarrow0,
\quad \mbox{ when } x\uparrow0.
\]
Moreover, for every $d>0$,
\[
\sup_{x\le-d}|1-V(t,x)|
\le
1-p(t)H(t,-d)
\le
1-p(t)+2\left(1-\Phi\left(\frac d{\sqrt t}\right)\right)
\longrightarrow0,
\quad \mbox{ when } t\to0.
\]
By definition, $V(t,0)=0$ for $t>0$ and $V(0,x)=1$
for $x<0$. These estimates establish joint continuity at every
boundary point other than $(0,0)$ and complete the proof.
\end{proof}

In the following proposition, we establish the viscosity solution property.

\begin{proposition}\label{prop:V-viscosity}
The value function $V$ is a viscosity subsolution and supersolution of
\eqref{eq:VI-F} on $(0,\infty)\times(-\infty,0)$.
\end{proposition}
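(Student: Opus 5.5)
We prove the two viscosity inequalities separately, using the continuity of $V$ from Lemma~\ref{lem:V-continuity}, the bound $V\le p$ from Lemma~\ref{lem:corner-bounds}, and the two forms of the dynamic programming principle in Proposition~\ref{lem:local-continuation-DPP}. Fix an interior point $(t_0,x_0)\in(0,\infty)\times(-\infty,0)$ and a test function $\varphi\in C^{1,2}$. Without loss of generality $V(t_0,x_0)=\varphi(t_0,x_0)$, and the extremum is strict on a small closed rectangle $\overline R\subset(0,\infty)\times(-\infty,0)$. Throughout, for the backward process $s\mapsto(t_0-s,X_s^{x_0})$, It\^o's formula gives
\[
\E\bigl[\varphi(t_0-\zeta,X_\zeta^{x_0})\bigr]-\varphi(t_0,x_0)=\E\Bigl[\int_0^\zeta\bigl(-\varphi_t+\tfrac12\varphi_{xx}\bigr)(t_0-s,X_s^{x_0})\dd s\Bigr]
\]
for any stopping time $\zeta$ before the exit from $R$. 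This is why the operator $\varphi_t-\frac12\varphi_{xx}$ appears with the sign in \eqref{eq:VI-F}.

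\emph{Subsolution.} Let $V-\varphi$ have a local maximum at $(t_0,x_0)$. The inequality $V-p\le0$ holds everywhere, so it remains to show $\varphi_t-\frac12\varphi_{xx}\le0$ at $(t_0,x_0)$. Suppose instead that $\varphi_t-\frac12\varphi_{xx}\ge\eta>0$ on $\overline R$ after shrinking $R$. Let $\rho:=h\wedge\nu_R$, with $h$ small so that $\rho<t_0$, and note $\rho<\sigma_0^{x_0}$ because $\overline R$ stays away from $\{x=0\}$. By \eqref{eq:stopped-waiting-inequality} and $V\le\varphi$ on $R$,
\[
\varphi(t_0,x_0)=V(t_0,x_0)\le\E\bigl[V(t_0-\rho,X_\rho^{x_0})\bigr]\le\E\bigl[\varphi(t_0-\rho,X_\rho^{x_0})\bigr]\le\varphi(t_0,x_0)-\eta\,\E[\rho].
\]
Since $\E[\rho]>0$, this is a contradiction.

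\emph{Supersolution.} Let $V-\varphi$ have a local minimum at $(t_0,x_0)$. If $V(t_0,x_0)=p(t_0)$, the second entry of the max is $0$ and we are done. Otherwise $V(t_0,x_0)<p(t_0)$, and by continuity of $V-p$ there is a neighbourhood $Q$ with $\overline Q\subset(0,\infty)\times(-\infty,0)$ on which $V<p$. Suppose $\varphi_t-\frac12\varphi_{xx}\le-\eta<0$ on $\overline Q$ after shrinking. The equality \eqref{eq:local-continuation-DPP} with $\zeta_h=h\wedge\nu_Q$, together with $V\ge\varphi$ on $Q$ and It\^o's formula, gives
\[
\varphi(t_0,x_0)=V(t_0,x_0)=\E\bigl[V(t_0-\zeta_h,X_{\zeta_h}^{x_0})\bigr]\ge\E\bigl[\varphi(t_0-\zeta_h,X_{\zeta_h}^{x_0})\bigr]\ge\varphi(t_0,x_0)+\eta\,\E[\zeta_h],
\]
again a contradiction since $\E[\zeta_h]>0$. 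Hence $\max\{\varphi_t-\frac12\varphi_{xx},V-p\}\ge0$.

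\emph{Expected difficulty.} The main subtlety is the supersolution case, which needs the exact martingale (continuation) property of $V$. That property is supplied by the second part of Proposition~\ref{lem:local-continuation-DPP}, which in turn relies on $V$ being continuous so that $\{V<p\}$ is open; Lemma~\ref{lem:V-continuity} provides this. The remaining technical checks are routine: the exit time stays strictly before $\sigma_0^{x_0}$ and before backward time $t_0$, and $\E[\zeta_h]>0$. The latter holds because the process starts in the interior of $Q$ and has continuous paths.
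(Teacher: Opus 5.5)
Your proposal is correct and follows essentially the same route as the paper. You localize with $h\wedge\nu_Q$, use the DPP inequality \eqref{eq:stopped-waiting-inequality} for the subsolution and the continuation identity \eqref{eq:local-continuation-DPP} on $\{V<p\}$ for the supersolution, and then apply It\^o's formula. The only cosmetic difference is that you conclude by contradiction using a uniform bound $\pm\eta$ on $\varphi_t-\frac12\varphi_{xx}$ and $\E[\zeta_h]>0$, whereas the paper divides by $h$ and lets $h\to0$ by dominated convergence.
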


\begin{proof}
\noindent\emph{Subsolution property.}
Let $\varphi\in C^{1,2}$ touch $V$ from above at an interior point
$(t_0,x_0)$. Suppose without loss of generality that 
    $(V-\varphi)(t_0,x_0)=0$. 
Choose a bounded open cylinder $Q$, with compact closure, such that
    $(t_0,x_0)\in Q$, and
 $\overline Q\subset(0,\infty)\times(-\infty,0)$, 
 $V\leq\varphi$ on $\overline Q$.
We further define
    $\nu_Q:=\inf\{s\geq0:(t_0-s,X_s^{x_0})\notin Q\}$, 
    $\zeta_h:=h\wedge\nu_Q$. 
For all sufficiently small $h>0$, Proposition
\ref{lem:local-continuation-DPP} and the fact that
$\zeta_h<\sigma_0^{x_0}$ imply
\[
    \varphi(t_0,x_0)
    =V(t_0,x_0)
    \leq \E\!\left[
       V(t_0-\zeta_h,X_{\zeta_h}^{x_0})\right] 
    \leq \E\!\left[
       \varphi(t_0-\zeta_h,X_{\zeta_h}^{x_0})\right].
\]
Applying It\^o's formula to the stopped process yields
\begin{equation}\label{eq:subsolution-stopped-Ito}
    0\leq
    \E\!\left[\int_0^{\zeta_h}
       \left(-\varphi_t+\frac12\varphi_{xx}\right)
       (t_0-s,X_s^{x_0})\,ds\right].
\end{equation}
Since $(t_0,x_0)\in Q$ and the paths of
$s\mapsto(t_0-s,X_s^{x_0})$ are continuous, we have
$\nu_Q>0$ almost surely. Hence
$\zeta_h/h\to1$ almost surely as $h\to0$.
By continuity of the integrand,
\[
\frac1h\int_0^{\zeta_h}
\left(-\varphi_t+\frac12\varphi_{xx}\right)
(t_0-s,X_s^{x_0})\,ds
\longrightarrow
\left(-\varphi_t+\frac12\varphi_{xx}\right)(t_0,x_0)
\quad\text{almost surely}.
\]
Since the integrand is bounded on $\overline Q$ and
$\zeta_h\le h$, dominated convergence allows us to divide
\eqref{eq:subsolution-stopped-Ito} by $h$ and let
$h\to0$, yielding
   $-\varphi_t(t_0,x_0)
       +\frac12\varphi_{xx}(t_0,x_0)\geq0$,
or equivalently
    $\varphi_t(t_0,x_0)
       -\frac12\varphi_{xx}(t_0,x_0)\leq0$. 
Immediate stopping gives $V(t_0,x_0)\leq p(t_0)$; hence
\[
    \max\left\{
       \varphi_t(t_0,x_0)-\frac12\varphi_{xx}(t_0,x_0),
       V(t_0,x_0)-p(t_0)
    \right\}\leq0.
\]

\smallskip
\noindent\emph{Supersolution property.}
Let $\varphi\in C^{1,2}$ touch $V$ from below at an interior point
$(t_0,x_0)$, and normalize it so that
$(V-\varphi)(t_0,x_0)=0$.  If
$V(t_0,x_0)=p(t_0)$, then
\[
    \max\left\{
       \varphi_t(t_0,x_0)-\frac12\varphi_{xx}(t_0,x_0),
       V(t_0,x_0)-p(t_0)
    \right\}\geq0
\]
follows immediately from the obstacle branch.

Suppose now that $V(t_0,x_0)<p(t_0)$, and set
$\mathcal C:=\{(t,x)\in(0,\infty)\times(-\infty,0):V(t,x)<p(t)\}$.  Since $V-p$ is continuous, there is
an open cylinder $Q$ such that
    $(t_0,x_0)\in Q$,
    $\overline Q\subset\mathcal C$, and
    $V\geq\varphi$ on $\overline Q$, 
with $\nu_Q$ and $\zeta_h=h\wedge\nu_Q$ defined as above. The continuation
identity \eqref{eq:local-continuation-DPP} implies
\[
    \varphi(t_0,x_0)
    =V(t_0,x_0)
    =\E\!\left[
       V(t_0-\zeta_h,X_{\zeta_h}^{x_0})\right] 
    \geq\E\!\left[
       \varphi(t_0-\zeta_h,X_{\zeta_h}^{x_0})\right].
\]
By It\^o's formula, we have
\[
    0\geq
    \E\!\left[\int_0^{\zeta_h}
       \left(-\varphi_t+\frac12\varphi_{xx}\right)
       (t_0-s,X_s^{x_0})\,ds\right].
\]
As in the subsolution argument, $\E[\zeta_h]/h\to1$.  Dividing by $h$ and letting
$h\to0$ yields
    $-\varphi_t(t_0,x_0)
       +\frac12\varphi_{xx}(t_0,x_0)\leq0$,
and hence
\[
    \varphi_t(t_0,x_0)
       -\frac12\varphi_{xx}(t_0,x_0)\geq0.
\]
Thus the viscosity supersolution inequality holds in the continuation region
as well.
\end{proof}

In the next proposition, we establish the comparison principle. The proof handles the singularity at $(0,0)$ by first establishing
comparison from a strictly positive initial time and then letting
that time tend to zero.

\begin{proposition}\label{prop:V-comparison}
Let $u$ be a bounded upper semicontinuous subsolution and $v$ be a
bounded lower semicontinuous supersolution of \eqref{eq:VI-F} on
$(0,\infty)\times(-\infty,0)$. Suppose that
\begin{equation}\label{eq:cmp4-bounds}
u(t,x),v(t,x)
  \in [ p(t)H(t,x), \, \min\{p(t),H(t,x)\}],\qquad t>0,\ x<0.
\end{equation}
Then $u\leq v$ on $(0,\infty)\times(-\infty,0)$. Consequently, there is at most one viscosity solution in the sense of
Definition~\ref{def:viscosity}.
\end{proposition}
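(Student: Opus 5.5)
We argue by contradiction with a doubling-of-variables scheme on a truncated, shifted domain. Fix $\delta>0$ and $T>\delta$, and suppose $\sup_{[\delta,T]\times(-\infty,0)}(u-v)>0$. We first collect what happens on the parabolic boundary of $[\delta,T]\times(-\infty,0)$.

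\emph{Boundary $x=0$.} Since both functions lie in $[pH,\min\{p,H\}]$ and $H(t,x)\to0$ uniformly for $t\ge\delta$ as $x\uparrow0$, we have $u,v\to0$ there. Hence $\limsup(u-v)\le 0$ at $x=0$.

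\emph{Far field $x\to-\infty$.} Here $H\to1$ uniformly on $[\delta,T]$, so both $u$ and $v$ are squeezed to $p(t)$. Thus $u-v\to0$ as $x\to-\infty$, and the supremum is attained in a bounded region. For safety one can still add a penalty $\epsilon x^2$ or $\epsilon\log(1+x^2)$.

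\emph{Initial time $t=\delta$.} The bounds give
\[
u-v\le \min\{p,H\}-pH\le (1-p(\delta))H\le 1-p(\delta)=:\eta_\delta .
\]
So we do not obtain $u\le v$ at $t=\delta$, only $u\le v+\eta_\delta$. We therefore aim for the shifted statement $u\le v+\eta_\delta$ on $[\delta,T]\times(-\infty,0)$.

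The key observation is that $v+\eta_\delta$ is still a supersolution of the variational inequality. Constants do not affect the heat operator. For the obstacle branch, whenever $v+\eta_\delta\ge p$ the max is already $\ge0$. Otherwise $v<p$, and the supersolution property of $v$ at a local minimum forces $\varphi_t-\frac12\varphi_{xx}\ge0$. In this case, by (iii)–(iv) of Definition~\ref{def:viscosity}, if $v-p<0$ the heat branch must hold; the shifted function has the same test functions, so the same conclusion holds. The subsolution $u$ satisfies $u\le p$ by the bounds.

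We then run the standard parabolic comparison for obstacle problems. Consider
\[
\Psi(t,x,s,y)=u(t,x)-v(s,y)-\eta_\delta-\frac{|x-y|^2+|t-s|^2}{2\alpha}-\frac{\gamma}{T-t}-\epsilon x^2 .
\]
Suppose its maximum is positive at $(\hat t,\hat x,\hat s,\hat y)$. By the boundary analysis above, for small $\alpha$ this point is interior. If $v(\hat s,\hat y)\ge p(\hat s)$, then $u(\hat t,\hat x)\le p(\hat t)$ together with continuity of $p$ contradicts positivity as $\alpha\to0$. Otherwise, the parabolic Crandall–Ishii lemma gives $\gamma/T^2\le O(\epsilon)$, which is a contradiction for $\epsilon$ small relative to $\gamma$. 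Hence $u\le v+\eta_\delta$ on $[\delta,T)\times(-\infty,0)$.

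Now let $\delta\downarrow0$. Since $\eta_\delta=1-p(\delta)\to0$ by continuity of $p$ with $p(0)=1$, this yields $u\le v$, and $T$ is arbitrary.

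\emph{Uniqueness.} Any two admissible solutions in the sense of Definition~\ref{def:viscosity} satisfy the bounds \eqref{eq:cmp4-bounds}, so comparison applies in both directions and they coincide.

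The main obstacle is the initial-time step near the corner. The estimate \eqref{eq:corner-comparison-control} gives only a gap of size $1-p(\delta)$, not zero, so the shift by $\eta_\delta$ and the check that it preserves the supersolution property are the essential points. The uniform vanishing of $u,v$ at $x=0$ for $t\ge\delta$ is what makes the lateral boundary harmless.
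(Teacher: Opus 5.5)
Your proposal is correct and follows essentially the same route as the paper: use the bounds to get $u-v\le 1-p(\delta)$ at $t=\delta$ and the uniform vanishing of both functions as $x\uparrow 0$, prove $u\le v+1-p(\delta)$ on $t\ge\delta$ by doubling variables (ruling out the obstacle case via $u\le p$ and continuity of $p$), and then let $\delta\downarrow0$. The differences are cosmetic. You shift the supersolution up by $1-p(\delta)$, which is again a supersolution because raising the function only helps the obstacle branch. The paper instead shifts the subsolution down and adds $-\kappa(t-\varepsilon)$ for strictness, whereas you take strictness from the $\gamma/(T-t)$ penalty.
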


\begin{proof}
Extend $u$ and $v$ to $x=0$ by setting
$u(t,0)=v(t,0)=0$ for all $t>0$. 
For every $0<\varepsilon<T$, $t\in[\varepsilon,T]$ and $x<0$, we have
\[
 \max\{ u(t,x),v(t,x)\}\le H(t,x)
\le \sqrt{\frac{2}{\pi\varepsilon}}\,(-x).
\]
Thus the extensions remain upper and lower semicontinuous respectively.

\medskip
\noindent\emph{Step 1: reduction to positive times.}
Fix $\varepsilon>0$ and let $d_\varepsilon:=1-p(\varepsilon)$. 
\eqref{eq:cmp4-bounds} implies that for $x\le0$,
$$
u(\varepsilon,x)-v(\varepsilon,x)
\le H(\varepsilon,x)-p(\varepsilon)H(\varepsilon,x)
\le d_\varepsilon.
$$
The aim of the remaining proof is to extend the above inequality to $t > \varepsilon$, i.e. for  $t\ge\varepsilon$, $x\le0$, we have
\begin{equation}\label{eq:cmp4-strip}
u(t,x)-v(t,x)\le d_\varepsilon.
\end{equation}
Fix $\kappa>0$ and define $w(t,x):=u(t,x)-d_\varepsilon-\kappa(t-\varepsilon)$. Then
\[
w(\varepsilon,x)\le v(\varepsilon,x),
\qquad
w(t,0)=-d_\varepsilon-\kappa(t-\varepsilon)\le v(t,0),
\qquad
w(t,x)\le p(t).
\]
If $\varphi$ touches $w$ from above at $(t,x)$, for
$t>\varepsilon$ and $x<0$, let 
$\psi(r,y):=\varphi(r,y)+d_\varepsilon+\kappa(r-\varepsilon)$.
Since $u-\psi=w-\varphi$, the function $\psi$ touches $u$ from above at the same point.
The subsolution inequality implies 
\[
0\ge
\max\left\{
\psi_t-\frac12\psi_{xx},\,u-p(t)
\right\}
\ge
\varphi_t+\kappa-\frac12\varphi_{xx}.
\]
Hence
\begin{equation}\label{eq:cmp4-strict}
\varphi_t-\frac12\varphi_{xx}\le-\kappa.
\end{equation}

\medskip
\noindent\emph{Step 2: localization and doubling of variables.}
Suppose that
$g:=w(t_*,x_*)-v(t_*,x_*)>0$. 
The boundary inequalities imply $t_*>\varepsilon$ and $x_*<0$.
Choose $T>t_*$ and then $\eta>0$ such that
$2\eta x_*^2+\frac{\eta}{T-t_*}<\frac g2$ and $2\eta+4\eta^2<\kappa$.
Let $m:=g/2$. For $0<\alpha,\beta\le1$, $\varepsilon\le s,t<T$, 
and $x,z\le0$, define
\[
P_{\alpha,\beta}(t,x,s,z)
:=
\frac{(x-z)^2}{2\alpha}
+\frac{(t-s)^2}{2\beta}
+\eta(x^2+z^2)
+\frac{\eta}{T-t},\ \ 
\Psi_{\alpha,\beta}(t,x,s,z)
:=w(t,x)-v(s,z)-P_{\alpha,\beta}(t,x,s,z).
\]
At the chosen point, $\Psi_{\alpha,\beta}(t_*,x_*,t_*,x_*)
=
g-2\eta x_*^2-\frac{\eta}{T-t_*}>m$. 
Choose $M>0$ such that $|w|,|v|\le M$ on
$[\varepsilon,T]\times(-\infty,0]$.
Whenever $\Psi_{\alpha,\beta}\ge m$, we have
\[
P_{\alpha,\beta}
\le w(t,x)-v(s,z)-m
\le2M.
\]
In particular, 
$x^2+z^2\le\frac{2M}{\eta}$, and
$T-t\ge\frac{\eta}{2M}$. 
Upper semicontinuity therefore yields a maximizer
$(t_{\alpha,\beta},x_{\alpha,\beta},
 s_{\alpha,\beta},z_{\alpha,\beta})$,
at which
\[
\Psi_{\alpha,\beta}>m,
\qquad
|x_{\alpha,\beta}-z_{\alpha,\beta}|^2\le4M\alpha,
\qquad
|t_{\alpha,\beta}-s_{\alpha,\beta}|^2\le4M\beta.
\]

Along any sequence $\alpha,\beta\to0$, a subsequence
of the maximizers converges to $
(\bar t,\bar x,\bar t,\bar x)$, with
$\varepsilon\le\bar t<T$, $\bar x\le0$.
By semicontinuity and nonnegativity of the quadratic penalties,
\[
m\le
w(\bar t,\bar x)-v(\bar t,\bar x)
-2\eta\bar x^2-\frac{\eta}{T-\bar t}.
\]
If $\bar t=\varepsilon$, the right-hand side is at most 
$-2\eta\bar x^2-\frac{\eta}{T-\varepsilon}<0$. 
If $\bar x=0$, it equals to
$-d_\varepsilon-\kappa(\bar t-\varepsilon)
-\frac{\eta}{T-\bar t}<0$.
Both cases are impossible. Consequently, for all sufficiently
small $\alpha,\beta$, both maximizing points lie in
$
(\varepsilon,T)\times(-\infty,0)$.

At these maximizers, abbreviate the coordinates by $(t,x,s,z)$.
Since $P_{\alpha,\beta}\ge0$, we have 
$w(t,x)-v(s,z)>m$.
Using $w(t,x)\le p(t)$, we obtain
$v(s,z)-p(s)
<
p(t)-p(s)-m$.
Let
\[
\omega_{\varepsilon,T}(r)
:=
\sup_{\substack{a,b\in[\varepsilon,T]\\|a-b|\le r}}
|p(a)-p(b)|.
\]
Then
$v(s,z)-p(s)
<
\omega_{\varepsilon,T}(|t-s|)-m$. 
Since $|t-s|\to0$, for all sufficiently small $\alpha,\beta$, we have
\begin{equation}\label{eq:cmp4-active}
v(s,z)-p(s)<-\frac m2<0.
\end{equation}

\medskip
\noindent\emph{Step 3: jet inequalities and contradiction.}
Apply the elliptic theorem of sums
\cite[Theorem~3.2]{CIL1992} to $w$ and $-v$,
regarding $(t,x)$ and $(s,z)$ as two-dimensional
Euclidean variables, and choose the parameter of that theorem to be $\alpha$.
There exist symmetric $2\times2$ matrices $B,C$ such that
\[
\bigl(D_{(t,x)}P_{\alpha,\beta},B\bigr)
\in\overline J^{2,+}w(t,x),
\qquad
\bigl(-D_{(s,z)}P_{\alpha,\beta},C\bigr)
\in\overline J^{2,-}v(s,z),
\]
and
\[
\begin{pmatrix}
B&0\\
0&-C
\end{pmatrix}
\le A+\alpha A^2,
\qquad
A:=D^2P_{\alpha,\beta}.
\]
Here the jets are taken in the space--time variables.
The viscosity inequalities apply to these closed jets:
$C^2$ space--time test functions are $C^{1,2}$, and the
inequalities pass to the jet limits.

Denote 
$X:=B_{22}$ and $Y:=C_{22}$.
The time components of the jet gradients are
\begin{equation}\label{eq:cmp4-time}
\begin{aligned}
a&:=\partial_tP_{\alpha,\beta}
=\frac{t-s}{\beta}+\frac{\eta}{(T-t)^2},\\
b&:=-\partial_sP_{\alpha,\beta}
=\frac{t-s}{\beta},\\
a-b&=\frac{\eta}{(T-t)^2}>0.
\end{aligned}
\end{equation}

In coordinate order $(t,x,s,z)$, we have
\[
A=
\begin{pmatrix}
\beta^{-1}+2\eta(T-t)^{-3}&0&-\beta^{-1}&0\\
0&\alpha^{-1}+2\eta&0&-\alpha^{-1}\\
-\beta^{-1}&0&\beta^{-1}&0\\
0&-\alpha^{-1}&0&\alpha^{-1}+2\eta
\end{pmatrix}.
\]
For $e=(0,1,0,1)^\top$, we have
\[
Ae=(0,2\eta,0,2\eta)^\top,
\qquad
e^\top Ae=4\eta,
\qquad
e^\top A^2e=|Ae|^2=8\eta^2.
\]
Testing the matrix inequality against $e$ yields
\begin{equation}\label{eq:cmp4-matrix}
\begin{aligned}
X-Y
=
e^\top
\begin{pmatrix}
B&0\\
0&-C
\end{pmatrix}e
&\le e^\top(A+\alpha A^2)e
=4\eta+8\alpha\eta^2.
\end{aligned}
\end{equation}

By \eqref{eq:cmp4-strict}, we have 
$a-\frac12X\le-\kappa$. 
The supersolution property and \eqref{eq:cmp4-active} imply that
\[
\max\left\{
b-\frac12Y,\,
v(s,z)-p(s)
\right\}\ge0,
\qquad
v(s,z)-p(s)<0,
\]
and hence $b-\frac12Y\ge0$. 
Subtracting these inequalities yields
\(
a-b\le\frac12(X-Y)-\kappa
\). 
Combining this with
\eqref{eq:cmp4-time}--\eqref{eq:cmp4-matrix}, we obtain
\[
0<
\frac{\eta}{(T-t)^2}
=a-b
\le\frac12(X-Y)-\kappa
\le2\eta+4\alpha\eta^2-\kappa
\le2\eta+4\eta^2-\kappa
<0,
\]
which is a contradiction. Therefore $w\le v$.

\medskip
\noindent\emph{Step 4: removal of the perturbations.}
We have proved that, for every $\kappa>0$, $t\ge\varepsilon$, $x\le0$, it holds that
$u(t,x)-v(t,x)
\le d_\varepsilon+\kappa(t-\varepsilon)$. 
Letting $\kappa\to0$, we get \eqref{eq:cmp4-strip}. For each fixed $t>0$ and $x\le0$, we may then let
$\varepsilon\to0$, with $\varepsilon<t$, and then obtain
\[
u(t,x)-v(t,x)
\le\lim_{\varepsilon\to0}
\bigl(1-p(\varepsilon)\bigr)=0.
\]
The initial values  of $u,v$ agree for $t=0$, $x<0$. Applying comparison in both directions, we obtain the uniqueness.
\end{proof}

We finally establish the spatial monotonicity of $V$, the semicontinuity
of the associated boundary, and the stopping-time property of its first
entry time.

\begin{lemma}\label{lem:V-contact-boundary}
For every $t>0$, $x\mapsto V(t,x)$ is nonincreasing and
$\lim_{x\to-\infty}V(t,x)=p(t)$.
Let $\hat b(t):=\sup\{x\le0:V(t,x)=p(t)\}$, with $\sup\varnothing=-\infty$.
For each $t>0$, this set is empty or $(-\infty,\hat b(t)]$.
The stopping region
\[
 \mathcal S:=\{(t,x)\in(0,\infty)\times(-\infty,0]:V(t,x)=p(t)\}
\]
is relatively closed, and $\hat b$ is upper semicontinuous on $(0,\infty)$.
For any nonpositive continuous adapted process $Y$ on a filtered probability
space satisfying the usual conditions,
$\tau_{\hat b}=\inf\{t>0:Y_t\le \hat b(t)\}$ is a stopping time.
\end{lemma}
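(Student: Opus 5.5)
The plan is to read every claim off the probabilistic representation \eqref{eq:backward-value}, using a pathwise coupling of the absorbed Brownian motions started from different points.

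\emph{Monotonicity.} Fix $t>0$ and $x'<x\le0$, and drive both $X^{x}$ and $X^{x'}$ by the same $W$. Then $\sigma_0^{x'}\ge\sigma_0^{x}$ pathwise, so $\1_{\{\theta<\sigma_0^{x'}\}}\ge\1_{\{\theta<\sigma_0^x\}}$ for every admissible $\theta$. The admissible class $\{\theta\in\cT:0\le\theta\le t\}$ does not depend on the starting point, and $p\ge0$. Taking expectations and infima therefore gives $V(t,x')\ge V(t,x)$.

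\emph{Limit at $-\infty$.} Lemma~\ref{lem:corner-bounds} gives
\[
p(t)H(t,x)\le V(t,x)\le p(t),
\]
and $H(t,x)=2\Phi(-x/\sqrt t)-1\to1$ as $x\to-\infty$. Hence $V(t,x)\to p(t)$.

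\emph{Shape of the contact set.} Since $V\le p$ and $V(t,\cdot)$ is nonincreasing, $V(t,x)=p(t)$ implies $V(t,x')=p(t)$ for every $x'\le x$. So the contact set $\{x\le0:V(t,x)=p(t)\}$ is a down-ray. It is closed because $V(t,\cdot)$ is continuous on $(-\infty,0]$ for $t>0$, by Lemma~\ref{lem:V-continuity}. Thus the set is empty or equals $(-\infty,\hat b(t)]$, with $\hat b(t)\le0$. If $p(t)=0$ the set is all of $(-\infty,0]$ and $\hat b(t)=0$; this case is consistent and needs no separate treatment.

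\emph{Closedness of $\mathcal S$.} Lemma~\ref{lem:V-continuity} shows that $V-p$ is continuous on $(0,\infty)\times(-\infty,0]$, so $\mathcal S=\{V-p=0\}$ is relatively closed there.

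\emph{Upper semicontinuity of $\hat b$.} Take $t_n\to t>0$ with $\hat b(t_n)\to\ell>-\infty$ along a subsequence; the aim is $\hat b(t)\ge\ell$. For any $y<\ell$ we have $y\le\hat b(t_n)$ for large $n$, so $(t_n,y)\in\mathcal S$. Closedness of $\mathcal S$ then gives $(t,y)\in\mathcal S$, i.e.\ $y\le\hat b(t)$. Letting $y\uparrow\ell$ yields $\hat b(t)\ge\ell$. The point of using $y<\ell$ is to stay away from any issue at $\ell=0$.

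\emph{Stopping-time property.} Set $A:=\{(t,y):t>0,\ y\le\hat b(t)\}=\mathcal S$, which by the ray structure is relatively closed in $(0,\infty)\times(-\infty,0]$. The process $t\mapsto(t,Y_t)$ is continuous and adapted, so $\tau_{\hat b}$ is its hitting time of $A$ after time $0$. I expect this to be the main (mild) obstacle, since $A$ is only closed relative to $t>0$ and may accumulate at $t=0$. I would handle it by showing that $\{\tau_{\hat b}\le t\}$ is in $\mathcal F_t$ for each $t$.

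To do this, write $\{\tau_{\hat b}\le t\}$ as the union over $k$ of the events that the path meets the closed set $A\cap([1/k,t]\times(-\infty,0])$ on $[1/k,t]$, together with the event $\{\tau_{\hat b}=0\}$. For each $k$, the path meets this compact-time closed set if and only if $\inf_{s\in[1/k,t]\cap\mathbb Q}\mathrm{dist}\bigl((s,Y_s),A_k\bigr)=0$, which is $\mathcal F_t$-measurable. The event $\{\tau_{\hat b}=0\}$ lies in $\mathcal F_{0+}=\mathcal F_0$ by the usual conditions. Alternatively, one can invoke the début theorem for the progressively measurable set $\{(t,\omega):(t,Y_t(\omega))\in A\}$, again using the usual conditions.
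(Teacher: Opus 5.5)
Your proof is correct and follows the paper's argument: coupling through a common $W$ for monotonicity, the corner bounds for the limit at $-\infty$, continuity of $V$ for the ray structure and the relative closedness of $\mathcal S$, and a sequential closedness argument for upper semicontinuity. The only variation is the stopping-time step. There you take rational infima of the distance to the closed truncations $A_k$, whereas the paper uses the continuous process $Z_t=p(t)-V(t,Y_t)$, the approximations $\tau_n$ and right-continuity of the filtration; both routes are valid.
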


\begin{proof}
For $x_1<x_2\le0$, use the same Brownian motion in $X_s^{x_i}=x_i+W_s$.
Then $\sigma_0^{x_1}\ge\sigma_0^{x_2}$, so, for every $0\le\theta\le t$,
\[
 \E[p(t-\theta)\1_{\{\theta<\sigma_0^{x_1}\}}]
 \ge \E[p(t-\theta)\1_{\{\theta<\sigma_0^{x_2}\}}].
\]
Taking infima gives $V(t,x_1)\ge V(t,x_2)$.
Moreover,
\[
 0\le p(t)-V(t,x)\le p(t)(1-H(t,x))\longrightarrow0
 \mbox{ when } x\to-\infty.
\]
Fix $t>0$ and write $S_t=\{x\le0:V(t,x)=p(t)\}$.
Continuity makes $S_t$ closed. If $x\in S_t$ and $z\le x$, then
$p(t)\ge V(t,z)\ge V(t,x)=p(t)$, hence $z\in S_t$.
If $S_t\ne\varnothing$, choose $x_j\in S_t$ with $x_j\to \hat b(t)$;
continuity gives $\hat b(t)\in S_t$ and $S_t=(-\infty,\hat b(t)]$.
Otherwise $\hat b(t)=-\infty$.

Joint continuity implies the relative closedness of $\mathcal S$. If $t_j\to t>0$ and $\ell=\limsup_j\hat b(t_j)>-\infty$, pass to a
subsequence with $\hat b(t_j)>-\infty$ and $\hat b(t_j)\to\ell$. Then $(t_j,\hat b(t_j))\in\mathcal S$ implies $(t,\ell)\in\mathcal S$, so
$\ell\le \hat b(t)$. If $\ell=-\infty$, this inequality is automatic. Thus $\hat b(t)\ge\limsup_j\hat b(t_j)$.

For the stopping-time assertion, set $Z_t=p(t)-V(t,Y_t)$ for $t>0$.
This is a nonnegative continuous adapted process, and
$\{Z_t=0\}=\{Y_t\le \hat b(t)\}$.
Under the usual conditions, exceptional null sets for the continuous
versions belong to $\mathcal F_0$.
Let $\tau_n=\inf\{s\ge1/n:Z_s=0\}$.
For $t\ge1/n$,
\[
 \{\tau_n\le t\}
 =\left\{\min_{s\in[1/n,t]}Z_s=0\right\}\in\mathcal F_t.
\]
Indeed the minimum equals the infimum over $(\mathbb Q\cap[1/n,t])\cup\{1/n,t\}$; for $t<1/n$ the event is empty.
Thus each $\tau_n$ is a stopping time, and $\tau_n\to\tau_{\hat b}$,  $\{\tau_{\hat b}<t\}=\bigcup_n\{\tau_n<t\}\in\mathcal F_t$.
Now by the right-continuity of the filtration, we have
\[
 \{\tau_{\hat b}\le t\}=\bigcap_{m\ge1}\{\tau_{\hat b}<t+1/m\}
 \in\bigcap_{m\ge1}\mathcal F_{t+1/m}=\mathcal F_t.
 \vspace{-8mm}
\]
\end{proof}

\vspace{2mm}
\begin{proof}[Proof of Theorem~\ref{thm:wellposedness}]
Combining Lemma~\ref{lem:V-continuity}, Proposition~\ref{prop:V-viscosity}, Definition~\ref{def:viscosity} and Lemma~\ref{lem:V-contact-boundary}, we can draw the conclusion.
\end{proof}

\section{Proof of the law recovery} \label{sec:proof2}

\subsection{Landmark approximations and characterization of the surviving distribution}

We first prove Proposition~\ref{prop:closed-strict-crossing}.
\begin{proof}[Proof of Proposition~\ref{prop:closed-strict-crossing}]
Let \(W\) be a standard Brownian motion starting from zero and let \(\widetilde Y_t:=-|W_t|\). Since \(Y\) and \(\widetilde Y\) have the
same law on continuous path space, it suffices to prove the assertion
with \(Y\) replaced by \(\widetilde Y\). For \(\varepsilon\in\{-1,1\}\), define
\[
 \sigma_\varepsilon
 :=\inf\{t>0:\varepsilon W_t\le b(t)\},
 \qquad
 \sigma_\varepsilon^\circ
 :=\inf\{t>0:\varepsilon W_t<b(t)\},
\]
with \(\inf\varnothing=\infty\). Because \(b(t)\le0\), for every \(t>0\), we have
\[
 \{\widetilde Y_t\le b(t)\}
   =\{W_t\le b(t)\}\cup\{-W_t\le b(t)\}, \ \ 
 \{\widetilde Y_t<b(t)\}
   =\{W_t<b(t)\}\cup\{-W_t<b(t)\}.
\]
Taking the infimum over \(t>0\), we have
 $\tau_b(\widetilde Y)=\sigma_1\wedge\sigma_{-1}$ 
and $\tau_b^\circ(\widetilde Y)
       =\sigma_1^\circ\wedge\sigma_{-1}^\circ$.
Each process \(\varepsilon W\) is a standard Brownian motion. Applying
\cite[Proposition~2]{CCCS} to the boundary \(b\) for each value of
\(\varepsilon\), we obtain
 $\mathbb P(\sigma_\varepsilon
              =\sigma_\varepsilon^\circ)=1$, for
$\varepsilon\in\{-1,1\}$.
Hence
\[
 \begin{aligned}
 \mathbb P\bigl(
   \tau_b(\widetilde Y)\ne\tau_b^\circ(\widetilde Y)
 \bigr)
 &\le
 \sum_{\varepsilon\in\{-1,1\}}
 \mathbb P(\sigma_\varepsilon
              \ne\sigma_\varepsilon^\circ)
 =0.
 \end{aligned}
\]
The equality in law of \(Y\) and \(\widetilde Y\) now yields
\(\tau_b(Y)=\tau_b^\circ(Y)\) almost surely.
\end{proof}

In the following, we adapt the landmark construction of \cite{CCCS} to the
upper-semicontinuous lower boundary $b$. For the selection of landmarks, we can extend $b$ by $b(0)=0$.
 We introduce $I_i^n=[i2^{-n},(i+1)2^{-n}]$ for $n\ge1$ and $i\ge0$, and define
$b(t_i^n)=\sup_{s\in I_i^n}b(s)$ for $t_i^n\in I_i^n$. 
The maximum is attained by upper semicontinuity; and if $b\equiv-\infty$ on $I_i^n$, we choose its left endpoint. Define
\[
 \A_n(b):=\left(\bigcup_{m=1}^n\{t_i^m:i\ge0\}\right)\cap(0,\infty),
 \qquad
 \tau_b^{(n)}:=\inf\{t\in\A_n(b):Y_t\le b(t)\}.
\]
The intersection with $(0,\infty)$ ensures that the condition $Y_t\le b(t)$ is checked only at strictly positive times,
as in the definition of $\tau_b$. Since $\A_n(b)\subseteq\A_{n+1}(b)$, we have
$\tau_b^{(n+1)}\le\tau_b^{(n)}$.

The monotonicity of \(\tau_b^{(n)}\) alone does not identify its limit. We use the equality of weak and strict crossing times proved
above to show that the landmark observations recover \(\tau_b\). This also allows us to pass from the landmark functions \(G_n\) to
\(G^b\).

\begin{lemma}\label{lem:landmark}
For every upper-semicontinuous $b:(0,\infty)\to[-\infty,0]$,
\[
 \tau_b^{(n)}\downarrow\tau_b\quad\hbox{almost surely}.
\]
Consequently, for every $t>0$ and $x\le0$,
\[
 G_n(t,x):=\Pbb(\tau_b^{(n)}\ge t,Y_t>x)
 \longrightarrow G^b(t,x):=\Pbb(\tau_b\ge t,Y_t>x).
\]
\end{lemma}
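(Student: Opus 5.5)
The plan is to sandwich the limit of the landmark times between the closed and strict crossing times, and then use Proposition~\ref{prop:closed-strict-crossing}.

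\emph{Monotone limit and lower bound.} Since $\A_n(b)\subseteq(0,\infty)$, every time at which $\tau_b^{(n)}$ can occur satisfies $Y_t\le b(t)$. Hence $\tau_b^{(n)}\ge\tau_b$ pathwise. We also know $\tau_b^{(n)}$ is nonincreasing in $n$. So $\tau^\infty:=\lim_n\tau_b^{(n)}$ exists and $\tau^\infty\ge\tau_b$.

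\emph{Upper bound by the strict crossing time.} Fix a path with $\tau_b^\circ<\infty$ and fix $\delta>0$. By definition of $\tau_b^\circ$ there is $s\in(\tau_b^\circ,\tau_b^\circ+\delta)$ with $Y_s<b(s)$, so $b(s)>-\infty$. Choose $\epsilon>0$ with $Y_s<b(s)-\epsilon$ and set $\eta:=b(s)-\epsilon-Y_s>0$. By continuity of $Y$ there is $r>0$ such that $|Y_u-Y_s|<\eta$ for $|u-s|\le r$, hence
\[
Y_u<b(s)-\epsilon\quad\text{for }|u-s|\le r .
\]
For $m$ large, the dyadic interval $I_i^m\ni s$ has length $2^{-m}<r$ and lies in $(\tau_b^\circ,\tau_b^\circ+\delta)$, away from $0$. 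Its landmark $t_i^m$ maximizes $b$ over $I_i^m$, so $b(t_i^m)\ge b(s)>-\infty$. Since $|t_i^m-s|\le r$,
\[
Y_{t_i^m}<b(s)-\epsilon<b(t_i^m).
\]
Thus $\tau_b^{(n)}\le t_i^m<\tau_b^\circ+\delta$ for all $n\ge m$. Letting $n\to\infty$ and then $\delta\to0$ gives $\tau^\infty\le\tau_b^\circ$.

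\emph{Conclusion and the main subtlety.} The key point is that landmarks are chosen as maximizers of $b$. This guarantees that a strict crossing at $s$ is detected at a nearby landmark even though $b$ may be very irregular; only upper semicontinuity is needed for the maximizer to exist. Combining the two bounds, $\tau_b\le\tau^\infty\le\tau_b^\circ$. Proposition~\ref{prop:closed-strict-crossing} gives $\tau_b=\tau_b^\circ$ almost surely, so $\tau_b^{(n)}\downarrow\tau_b$ almost surely.

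\emph{Convergence of $G_n$.} Because $\tau_b^{(n)}$ decreases to $\tau_b$, the events $\{\tau_b^{(n)}\ge t\}$ decrease in $n$. Their intersection is $\{\tau_b^{(n)}\ge t\ \forall n\}=\{\tau^\infty\ge t\}$. Indeed, $\tau^\infty\ge t$ together with monotonicity gives $\tau_b^{(n)}\ge\tau^\infty\ge t$ for all $n$; conversely, limits of quantities $\ge t$ are $\ge t$. This set equals $\{\tau_b\ge t\}$ up to a null set. Continuity of probability from above, applied to the events $\{\tau_b^{(n)}\ge t,\,Y_t>x\}$, then yields $G_n(t,x)\to G^b(t,x)$.
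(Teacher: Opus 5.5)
Your proof is correct and follows the same route as the paper's: the pathwise sandwich $\tau_b\le\lim_n\tau_b^{(n)}\le\tau_b^\circ$ using landmarks that maximize $b$ on each cell, closed by Proposition~\ref{prop:closed-strict-crossing}, and then continuity of probability from above, which the paper writes as $0\le G_n-G^b\le\Pbb(\tau_b^{(n)}\ge t>\tau_b)\to0$. One small slip: because $b$ is only upper semicontinuous, the strict-crossing set can reduce to the single point $\tau_b^\circ$, so there may be no $s\in(\tau_b^\circ,\tau_b^\circ+\delta)$ with $Y_s<b(s)$; instead take $s\in[\tau_b^\circ,\tau_b^\circ+\delta)$ with $s>0$ and require only that the cell $I_i^m\ni s$ lie in $(0,\tau_b^\circ+\delta)$, after which your argument goes through unchanged.
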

\begin{proof}
First, we have $\tau_b\le\lim_n\tau_b^{(n)}$ pathwise.  If a continuous path satisfies $Y_r<b(r)$ at some $r>0$, choose its dyadic landmark $t_i^n$ in a cell
containing $r$.  Then $t_i^n\to r$ and $b(t_i^n)\ge b(r)$, so $Y_{t_i^n}<b(t_i^n)$ for all sufficiently large $n$.  These landmarks
are positive for large $n$.  Taking the infimum over such $r$, we have
$\lim_n\tau_b^{(n)}\le\tau_b^\circ$.  Now by Proposition \ref{prop:closed-strict-crossing} we can draw the conclusion the first assertion.  The second assertion
follows directly from
\[
 0\le G_n(t,x)-G^b(t,x)
 \le\Pbb(\tau_b^{(n)}\ge t>\tau_b)\longrightarrow0.
  \vspace{-8mm}
\]
\end{proof}

 \vspace{2mm}

We remind here how $G_n$ evolves between landmark times and how it changes at a landmark. Let \(\mathsf K_h\) denote the semigroup of Brownian motion
absorbed at zero on \((-\infty,0)\):
\[
 (\mathsf K_h f)(x)
 =\int_{-\infty}^0
   \bigl(g_h(z-x)-g_h(z+x)\bigr)f(z)\,dz,
 \qquad
 g_h(z):=(2\pi h)^{-1/2}e^{-z^2/(2h)}.
\]
The transition density of the reflected process \(Y\) is $q_h(y,z)=g_h(z-y)+g_h(z+y)$, for $y,z \le0$.
If \(\nu\) is a finite measure on \((-\infty,0]\) and
\(f(x):=\nu((x,0])\), then Fubini's theorem implies that for $x<0$, we have
\begin{equation}\label{eq:reflected-absorbed-tail}
 \int_{(-\infty,0]}\nu(dy)\int_x^0q_h(y,z)\,dz
   =(\mathsf K_h f)(x). 
\end{equation}
Thus, when a distribution is propagated by the reflected process, the function \(x\mapsto\nu((x,0])\) evolves according to \(\mathsf K\). In particular, before the first
landmark, \(G_n(t,\cdot)=\mathsf K_t1=H(t,\cdot)\). If \(s\) is a landmark
and no further landmark lies in \((s,t)\), then
\[
 G_n(t,\cdot)=\mathsf K_{t-s}G_n(s+,\cdot).
\]

At a landmark \(s\in \A_n(b)\), we distinguish the value
\(G_n(s,\cdot)\), which includes paths stopping at \(s\), from the value
immediately afterward. By the definition of \(\tau_b^{(n)}\),
\begin{equation} \label{eq:landmark-update}
 G_n(s+,x)
 =\mathbb P\bigl(\tau_b^{(n)}>s,\ Y_s>x\bigr)
 =\mathbb P\bigl(\tau_b^{(n)}\ge s,\
                   Y_s>x\vee b(s)\bigr)
 =G_n(s,x\vee b(s)).
\end{equation}
When \(b(s)=-\infty\), the update leaves \(G_n(s,\cdot)\) unchanged.
Moreover, \(G_n(t,0)=0\) for all \(t>0\).

Between consecutive landmarks, \(G_n\) satisfies the heat equation.
At a landmark \(s\), its change 
 $G_n(s+,x)-G_n(s,x)$
is nonpositive and vanishes whenever \(x>b(s)\). Since only finitely
many landmarks meet any compact time interval, these observations imply
the distributional identity
\[
 \bigl(\partial_t-\tfrac12\partial_{xx}\bigr)G_n
 =\sum_{s\in \A_n(b)}
   \bigl[G_n(s+,x)-G_n(s,x)\bigr]\,
   \delta_s(dt)\,dx.
\]
Consequently, we have
\begin{equation}\label{eq:landmark-heat}
 \bigl(\partial_t-\tfrac12\partial_{xx}\bigr)G_n\le0
 \quad\text{on }(0,\infty)\times(-\infty,0),
 \qquad
 \bigl(\partial_t-\tfrac12\partial_{xx}\bigr)G_n=0
 \quad\text{in }Q_b.
\end{equation}

We can now prove Proposition \ref{prop:direct}.

\begin{proof}[Proof of Proposition~\ref{prop:direct}]
Write $\tau=\tau_b$ and $q(t)=\Pbb(\tau\ge t)$.
For a fixed $t\in(0,T]$, on $\{\tau=t\}$ continuity of the sample path and upper semicontinuity of $b$ imply $Y_t\le b(t)$.  On the other hand, we have
$Y_s>b(s)$ for $0<s<t$, consequently
\[
 Y_t\ge\limsup_{s\uparrow t}b(s)=b(t).
\]
Hence $\{\tau=t\}\subseteq\{Y_t=b(t)\}$, which is an event of probability zero.  The conclusion is also immediate when $b(t)=-\infty$.  Since
$\Pbb(\tau=0)=0$, $q$ is continuous on $[0,T]$ and $q(0)=1$.

Using the same argument, we can justify that $\{\tau\ge t\}\subseteq\{Y_t\ge b(t)\}$.  As the distribution of $Y_t$ is atomless, we  
know that $G^b(t,\cdot)$ is constant below $b(t)$ and $G^b(t,b(t))=q(t)$. Using the definition, we can directly obtain  $0\le G^b\le1$ and
the spatial monotonicity.  Joint continuity follows from sample-path continuity, the absence of atoms of $\tau$ at positive deterministic times, and the atomless marginals of $Y$.  Furthermore, for $x \le 0$, we have
\begin{equation}\label{eq:direct-initial-uniform}
 0\le H(t,x)-G^b(t,x)
   =\Pbb(\tau<t,Y_t>x)\le1-q(t).
\end{equation}
This proves (D1).  Now by Lemma~\ref{lem:landmark} and bounded convergence we can pass to the limit in \eqref{eq:landmark-heat}, and prove (D3)--(D4).  In particular, interior regularity for the heat equation implies that \(G^b\) is smooth in \(Q_b\).

We provide the comparison result, noticing that the boundary at zero and the initial corner must both be respected.  Let $U,V$ be two viscosity
solutions for the same $b$, and suppose
$U(t_0,x_0)-V(t_0,x_0)=8\delta>0$.  By continuity we may take
$0<t_0<T$ and $x_0<0$.  Choose $t_1\in(0,t_0)$ small enough such that
\[
 \sup_{x\le0}|U(t_1,x)-H(t_1,x)|<\delta,
 \qquad
 \sup_{x\le0}|V(t_1,x)-H(t_1,x)|<\delta.
\]
We then choose \(\eta>0\) sufficiently small such that $\eta<-x_0$, $\sup_{t\in[t_1,t_0]}b(t)<-\eta$ and $\sup_{t\in[t_1,t_0]}U(t,-\eta)<\delta$.
If necessary, we can further reduce \(\eta\) and also require
\[
 |V(t_0,x_0+\eta)-V(t_0,x_0)|<\delta,
 \qquad
 \sup_{x\le-\eta}
 |H(t_1,x+\eta)-H(t_1,x)|<\delta.
\]
Such a choice is possible by the compact separation of \(b\) from
zero, the continuity of \(U\) at \(x=0\), the continuity of \(V\)
at \((t_0,x_0)\), and the uniform continuity of \(H(t_1,\cdot)\).
Finally, choose \(\varepsilon\in(0,1)\) such that
\[
 \varepsilon(t_0-t_1)+\varepsilon^2x_0^2<\delta.
\]
For \(t_1\le t\le t_0\) and \(x\le-\eta\), let
\[
 \Phi(t,x):=U(t,x)-V(t,x+\eta)
       -\varepsilon(t-t_1)-\varepsilon^2x^2.
\]
At $(t_0,x_0)$ this exceeds $6\delta$; at $t=t_1$ it is at most
$3\delta$, at $x=-\eta$ at most $\delta$, and it tends uniformly to
$-\infty$ as $x\to-\infty$.  It therefore attains a positive maximum
at $(t_*,x_*)$ with $t_*>t_1$ and $x_*<-\eta$.

If $x_*\le b(t_*)-\eta$, both arguments lie in the regions where the corresponding functions are constant in \(x\).  Monotonicity of $V$ implies that 
 $\Phi(t_*,b(t_*))-\Phi(t_*,x_*)
 \ge\varepsilon^2\bigl(x_*^2-b(t_*)^2\bigr)>0$, which
contradicts the   maximality.  This case is empty if $b(t_*)=-\infty$.

If \(x_*+\eta>b(t_*)\), by (D3), the function
\((t,x)\mapsto V(t,x+\eta)\) is smooth in a neighborhood of
\((t_*,x_*)\) and satisfies
\[
 \bigl(\partial_t-\tfrac12\partial_{xx}\bigr)
 V(t,x+\eta)=0
\]
at \((t_*,x_*)\). Since \(U\) satisfies (D4) in the viscosity sense and
\(V(t,x+\eta)\) is a smooth solution of the heat equation,
\(\Phi\) satisfies
\[
 \bigl(\partial_t-\tfrac12\partial_{xx}\bigr)\Phi
 \le -\varepsilon+\varepsilon^2<0
\]
in the viscosity sense in a neighborhood of \((t_*,x_*)\).
This contradicts the maximum principle: a function satisfying
\[
\bigl(\partial_t-\tfrac12\partial_{xx}\bigr)\Phi<0
\]
near \((t_*,x_*)\) cannot attain its maximum there when \(t_*>t_1\)
and \(x_*\) is spatially interior. The conclusion also holds when
\(t_*=t_0\), since the maximum principle applies up to the final time.  Thus $U\le V$;
interchanging the two functions proves uniqueness.  The probability
identities in the proposition were established above.
\end{proof}


\subsection{Proof of Theorem~\ref{thm:recovery_law}}


Fix $T>0$ such that $p>0$ on $[0,T]$.  Throughout this subsection $F=V$ and $b=\hat b$.  Recall that \(Q_b=\{(t,x):t>0,\ b(t)<x<0\}\). By Theorem~\ref{thm:wellposedness} and \(F\le p\) give, on \(0<t\le T\), we have
\[
 Q_b=\{(t,x):F(t,x)<p(t),\ x<0\}.
\]
As $F(t,0)=0<p(t)$, upper semicontinuity implies for $0<\epsilon<T$,
\begin{equation}\label{eq:boundary-away-zero}
 \sup_{t\in[\epsilon,T]}b(t)<0.
\end{equation}
The endpoint $b(t)=-\infty$ is allowed in this assertion.

To apply Proposition~\ref{prop:direct}, we also need \(b(t)=b^-(t)\). The upper semicontinuity obtained from the previous section implies only
\(b(t)\ge b^-(t)\). The next lemma rules out a strict inequality by using the positivity of \(-F_x\) in \(Q_b\).

\begin{lemma}\label{lem:left-regular-boundary}
For $0<t\le T$, we have
\[
 b(t)=\limsup_{s\uparrow t}b(s).
\]
\end{lemma}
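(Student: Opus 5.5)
Upper semicontinuity of $b=\hat b$ (Lemma~\ref{lem:V-contact-boundary}) already gives $b(t)\ge b^-(t)$, so the task is to exclude $b(t)>b^-(t)$. The plan is to argue by contradiction and exploit the fact that $F=V$ solves the heat equation in $Q_b$ with $-F_x>0$ there. This strict negativity of $F_x$ is what the paper alludes to, and I would first establish it.

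\emph{Step 1: strict monotonicity in the continuation region.} On $Q_b$ the function $F$ solves $F_t=\frac12F_{xx}$ classically. This follows from Proposition~\ref{prop:V-viscosity} together with interior regularity, since there $F<p$ and the viscosity equation is just the heat equation. Moreover $x\mapsto F(t,x)$ is nonincreasing and $F_x\le 0$. I would show $F_x<0$ in $Q_b$ by the strong maximum principle applied to the caloric function $F_x\le 0$. If $F_x(t_0,x_0)=0$ at some point of $Q_b$, then $F_x\equiv0$ on the connected component of $Q_b\cap\{t\le t_0\}$ that is backward-connected to $(t_0,x_0)$. That component contains points near $x=0$ at earlier times, since by \eqref{eq:boundary-away-zero} a strip $(-\eta,0)$ lies in $Q_b$ on $[\epsilon,t_0]$. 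There $F$ would equal $F(t,0)=0$, which contradicts the lower bound $F\ge p(t)H(t,x)>0$ of Lemma~\ref{lem:corner-bounds}.

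\emph{Step 2: contradiction at an upward jump.} Suppose $b(t)>b^-(t)$ for some $t\in(0,T]$. Pick $c$ with $b^-(t)<c<b(t)$ and $\delta>0$ such that $b(s)<c'$ for $s\in(t-\delta,t)$, where $c<c'<b(t)$ is arranged by the definition of $\limsup$. Then the rectangle $R=(t-\delta,t)\times(c',0)$ is contained in $Q_b$, so $F$ is caloric and strictly decreasing in $x$ there. At time $t$, however, $F(t,x)=p(t)$ for all $x\le b(t)$, in particular on $[c',b(t)]$. By joint continuity of $F$ (Lemma~\ref{lem:V-continuity}), $F(s,x)\to p(t)$ uniformly for $x\in[c',b(t)]$ as $s\uparrow t$.

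To get a contradiction I would use the heat equation inside $R$ up to its top edge. Fix $x_1<x_2$ in $(c',b(t))$. By the parabolic Harnack inequality, or by representing $F$ in a slightly smaller box through the heat kernel with its parabolic boundary data, the drop $F(s,x_1)-F(s,x_2)$ is bounded below by a positive quantity at $s=t$. Concretely, the difference $D(s)=F(s,x_1)-F(s,x_2)$ is positive for $s<t$, and $D(t)=0$. The function $w(s,x)=F(s,x)-F(s,x+h)$ with $h=x_2-x_1$ is caloric and $\ge0$ on a box $(t-\delta,t]\times(c',b(t)-h)$, continuous up to $s=t$ where it vanishes. Using $F\le p$ together with the obstacle, the standard argument is to test the supersolution inequality at time $t$: the function $F$ equals $p(t)$ on $[c',b(t)]$ at time $t$ while it is strictly below $p$ just before. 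Alternatively one uses the Hopf/strong maximum principle applied to $w\ge0$, which is caloric in the box and vanishes at an interior point of the top edge. The strong maximum principle for caloric functions gives $w\equiv0$ on the earlier part of the box, contradicting Step 1.

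\emph{Main obstacle.} The delicate point is justifying that $F$ is caloric up to and including the top time slice $s=t$ on $(c',b(t))$. The strong minimum principle needs the zero of $w$ to be attained at a point of the parabolic interior, which does include the top edge, but only if the equation holds there. Points $(t,x)$ with $x\le b(t)$ lie in the stopping set, not in $Q_b$. I would handle this by working with the open box below time $t$ and using continuity. Since $w(s,\cdot)\to0$ uniformly on $[c',b(t)-h]$ while $w>0$ inside, I would compare $w$ with a positive caloric barrier: the solution in the box with lateral data $\min w>0$ on a compact sub-box, for which the barrier stays bounded below at time $t$. This forces $w(t,\cdot)>0$ and contradicts $w(t,\cdot)=0$. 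Making this barrier argument rigorous, together with the uniform positivity of $w$ on the lateral sides near time $t$ from Step 1 and continuity, is where I expect the real work.
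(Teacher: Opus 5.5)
Your Step~1 matches the paper's argument. Upper semicontinuity of $b$ places a rectangle $(t_0-\delta,t_0]\times(a_0,0)$ inside $Q_b$, the strong maximum principle for the caloric function $F_x\le 0$ then makes $F(s,\cdot)$ constant up to $x=0$, and this contradicts $F\ge pH>0$.

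The gap is in Step~2. The route you commit to is a caloric barrier for $w(s,x)=F(s,x)-F(s,x+h)$ with lateral data $\min w>0$, justified by ``uniform positivity of $w$ on the lateral sides near time $t$ from Step 1 and continuity''. This cannot work. Any box around $x_1$ has a left side $x=\alpha<x_1$ with $\alpha+h<x_2\le b(t)$. Hence $w(t,\alpha)=p(t)-p(t)=0$, and continuity forces $w(s,\alpha)\to0$ as $s\uparrow t$. Step~1 only gives positivity at each point with $s<t$. A lower bound uniform up to $s=t$ is exactly what you are trying to prove, so the argument is circular. Testing the supersolution inequality at time $t$ also gives nothing, because on the stopping set that inequality holds trivially through the obstacle branch.

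The missing idea is a barrier that needs only nonnegativity on the lateral sides and strict positivity at one earlier time slice. The paper proceeds as follows.
\begin{itemize}
\item Fix $a<c<d<r<b(t)$ with $b(s)<a$ just before $t$, and start at a time $s_1<t$.
\item Use the decaying Dirichlet mode $m\,e^{-\lambda(s-s_1)}\sin(\pi(y-a)/(r-a))$, where $\lambda=\pi^2/(2(r-a)^2)$ and $m=\min_{[a,r]}(-F_x)(s_1,\cdot)>0$.
\item Compare it with $-F_x\ge0$ on $[s_1,s_0]\times[a,r]$ for every $s_0<t$. This gives $-F_x\ge c_0>0$ on $[s_1,t)\times[c,d]$, hence $F(s,c)-F(s,d)\ge c_0(d-c)$.
\item Let $s\uparrow t$, using only continuity of $F$, to contradict $F(t,c)=F(t,d)=p(t)$.
\end{itemize}
The same barrier works for your $w$.

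The ``main obstacle'' you flag is also not real, because the top edge is not part of the parabolic boundary. Take $w$ continuous on $[s_1,t]\times[\alpha,\beta]$ and caloric inside. Solve the Cauchy--Dirichlet problem with the lateral data extended constantly past $t$. The weak maximum principle on $[s_1,s']\times[\alpha,\beta]$ for $s'<t$, plus continuity, identifies $w$ with that solution, so $w$ is caloric on the top edge. The strong minimum principle at $(t,x_1)$ then gives $w\equiv0$, contradicting Step~1. The parabolic Harnack inequality, whose later cylinder may reach the top time, closes the argument in the same way. As written, however, you dismiss the first of these routes and only name the second without carrying it out.
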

\begin{proof}
Since \(F<p\) in \(Q_b\), by the variational inequality, we have \(F_t-\frac12F_{xx}=0\) in \(Q_b\) in the viscosity sense.
Using the interior regularity for the heat equation, we know  \(F\in C^\infty(Q_b)\). Hence \(u:=-F_x\) is well defined in \(Q_b\). In addition,
it follows from the spatial monotonicity that \(u\ge0\), and from direct differentiation of the
heat equation that \(u_t-\frac12u_{xx}=0\) in \(Q_b\).

We now justify that $u>0$ throughout $Q_b$. Indeed, suppose $u(t,x)=0$ at an interior point. Choose a finite $a_0$ with $b(t)<a_0<x$.
By upper semicontinuity of $b$, there exists $\delta\in(0,t)$ such that $b(s)<a_0$ for $t-\delta<s\le t$. The strong maximum principle on the rectangle
$(t-\delta,t]\times(a_0,0)$ then implies that $u(s,y)=0$ for all $t-\delta<s<t$ and $a_0<y<0$. Thus $F(s,\cdot)$ would be constant on $(a_0,0)$. Its continuous value at zero is zero, contradicting $F(s,y)\ge p(s)H(s,y)>0$ for $a_0<y<0$.

Upper semicontinuity already implies that $b(t)\ge b^-(t)$, with equality when $b(t)=-\infty$.  Suppose now $b^-(t)<b(t)>-\infty$.  Choose
finite numbers $a<c<d<r<b(t)$ with $b^-(t)<a$.  There is $h>0$ with $t-2h>0$ and $b(s)<a$ for $t-2h\le s<t$.  Therefore \(u\) is strictly positive and satisfies \(u_t-\frac12u_{xx}=0\) in a neighborhood of every point of \([t-h,t)\times[a,r]\).
At time \(t-h\), let $m:=\min_{y\in[a,r]}u(t-h,y)>0$ and $\lambda:=\frac{\pi^2}{2(r-a)^2}$. For $t-h\le s<t, a\le y\le r$, define 
\[
 w(s,y):=m e^{-\lambda(s-t+h)}
          \sin\!\left(\frac{\pi(y-a)}{r-a}\right).
\]
The function \(w\) satisfies \(w_s-\frac12w_{yy}=0\). At the initial time, \(w(t-h,y)\le m\le u(t-h,y)\); on the spatial boundaries, \(w(s,a)=w(s,r)=0\le u(s,a),u(s,r)\). For any \(s_0<t\), the parabolic comparison principle on \([t-h,s_0]\times[a,r]\) therefore implies \(u\ge w\).
As \(s_0<t\) is arbitrary, this holds throughout \([t-h,t)\times[a,r]\).

In particular, for \(y\in[c,d]\) and \(t-h\le s<t\), we have
\[
 u(s,y)\ge c_0:=
 m e^{-\lambda h}
 \min_{z\in[c,d]}
 \sin\!\left(\frac{\pi(z-a)}{r-a}\right)>0.
\]
Consequently, we obtain $
 F(s,c)-F(s,d)=\int_c^d u(s,y)\,dy\ge c_0(d-c)>0$. 
By continuity of \(F\) and letting \(s\uparrow t\), we have
\[
 F(t,c)-F(t,d)\ge c_0(d-c)>0.
\]
But \(c<d<b(t)\), and \(F(t,x)=p(t)\) for every \(x\le b(t)\).
Thus \(F(t,c)=F(t,d)=p(t)\), which is a contradiction.  This proves the lemma.
\end{proof}

A further hypothesis of Proposition~\ref{prop:direct} is that \(\Pbb(\tau_b>0)=1\). The condition that \(b(t)<0\) for $t>0$
does not directly rule out \(\tau_b=0\), since the boundary may approach zero as \(t\downarrow0\). We establish the required
initial survival by comparing \(F\) with the landmark functions.

\begin{lemma}\label{lem:initial-survival}
The constructed boundary satisfies $\Pbb(\tau_b=0)=0$.
\end{lemma}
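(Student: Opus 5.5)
The plan is to show that the constructed solution $F=V$ lies below every landmark function $G_n$ from Lemma~\ref{lem:landmark}, built with $b=\hat b$. Granting that, we let $n\to\infty$ and then $x\to-\infty$. This gives $\Pbb(\tau_b\ge t)\ge p(t)$ for small $t>0$. Since $\{\tau_b=0\}\subseteq\{\tau_b<t\}$, it follows that $\Pbb(\tau_b=0)\le 1-p(t)$, and the right-hand side tends to $0$ as $t\downarrow0$ by continuity of $p$ with $p(0)=1$.

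\emph{Step 1: the ordering $G_n\ge F$ on $(0,T]\times(-\infty,0]$.} Fix $n$. The set $\A_n(b)\cap(0,T]$ is finite: each dyadic cell meets it at most once per level $m\le n$, and a landmark at $0$ is excluded by the intersection with $(0,\infty)$. Label these points $0<s_1<\dots<s_k\le T$ and argue by induction over the intervals between them.
\begin{itemize}
\item On $(0,s_1]$ we have $G_n=\mathsf K_t1=H$, and Lemma~\ref{lem:corner-bounds} gives $F\le H$.
\item At a landmark $s$, suppose $G_n(s,\cdot)\ge F(s,\cdot)$. By \eqref{eq:landmark-update}, $G_n(s+,x)=G_n(s,x\vee b(s))\ge F(s,x\vee b(s))$. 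For $x\le b(s)$, Theorem~\ref{thm:wellposedness}(iii) gives $F(s,x\vee b(s))=p(s)=F(s,x)$. For $x>b(s)$ the update changes nothing. Hence $G_n(s+,\cdot)\ge F(s,\cdot)$; when $b(s)=-\infty$ this is trivial.
\item On $(s_j,s_{j+1}]$, the function $G_n=\mathsf K_{t-s_j}G_n(s_j+,\cdot)$ is a bounded classical solution of the heat equation. It vanishes at $x=0$ and is continuous up to $t=s_j$ from the right, at least at continuity points of the initial datum. Its initial datum has the monotone form $x\mapsto\nu((x,0])$, with at most one jump, at $b(s_j)$.
\item Because $\max\{F_t-\frac12F_{xx},F-p\}=0$, the function $F$ is a bounded continuous viscosity subsolution of $F_t-\frac12F_{xx}\le0$, with $F(t,0)=0$.
\end{itemize}
To close the induction we need a comparison principle on the strip $(s_j,s_{j+1}]\times(-\infty,0)$ for bounded sub- and supersolutions of the heat equation. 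I would run the argument of Proposition~\ref{prop:V-comparison} with only the heat operator: perturb by $\kappa(t-s_j)$, penalize with $\eta x^2+\eta/(T'-t)$ to handle the unbounded domain, and double variables. Here the obstacle branch is absent, so that argument simplifies.

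\emph{Step 2: passage to the limit.} For $0<t\le T$ and $x\le0$, Step 1 and Lemma~\ref{lem:corner-bounds} give
\[
 \Pbb(\tau_b^{(n)}\ge t,\,Y_t>x)=G_n(t,x)\ge F(t,x)\ge p(t)H(t,x).
\]
Lemma~\ref{lem:landmark} lets $n\to\infty$, so that
\[
 \Pbb(\tau_b\ge t)\ge G^b(t,x)\ge p(t)H(t,x).
\]
Letting $x\to-\infty$, where $H(t,x)\to1$, yields $\Pbb(\tau_b\ge t)\ge p(t)$. This finishes the argument as explained in the first paragraph.

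\emph{Main obstacle.} I expect the delicate point to be the comparison in Step 1 at the initial time $t=s_j$ of each strip. The datum $G_n(s_j+,\cdot)$ may jump at $b(s_j)$, so $G_n$ is not continuous up to $t=s_j$ there. I would handle this with semicontinuous envelopes: $\liminf_{t\downarrow s_j}G_n(t,y)$ at $y\to x$ is at least the lower envelope of $G_n(s_j+,\cdot)$. That envelope still dominates $F(s_j,\cdot)$, because $F(s_j,\cdot)$ is continuous and lies below $G_n(s_j+,\cdot)$ on both sides of the jump. Alternatively, one can compare directly through the representation $G_n=\mathsf K_{t-s_j}G_n(s_j+,\cdot)$, together with the fact that $F$ is a subsolution and hence $F(t,\cdot)\le\mathsf K_{t-s_j}F(s_j,\cdot)$. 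That inequality follows from the sub-martingale form of \eqref{eq:stopped-waiting-inequality} with $\rho=(t-s_j)\wedge\sigma_0^x$, which gives $V(t,x)\le\mathsf K_{t-s_j}V(s_j,\cdot)(x)$. This second route avoids the PDE comparison entirely, and I would try it first.
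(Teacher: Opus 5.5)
Your proposal is correct and is essentially the paper's proof. The paper also runs an induction over the finitely many landmarks to show $F\le G_n$, using $F\le H$ before the first landmark, the constancy $F(s,\cdot)=p(s)$ below $\hat b(s)$ at each truncation, and the DPP inequality $F(t,\cdot)\le\mathsf K_{t-s}F(s,\cdot)$ between landmarks (your preferred second route), before passing to $G^b$ via Lemma~\ref{lem:landmark} and letting $x\to-\infty$. The PDE-comparison detour is unnecessary, and the jump you worry about does not occur: $G_n(s,\cdot)$ is continuous because the law of $Y_s$ is atomless, so the truncated datum $G_n(s,\cdot\vee b(s))$ has only a kink at $b(s)$.
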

\begin{proof}
By the dynamic programming inequality, we have that for $0<s<t$, 
\begin{equation}\label{eq:inverse-semigroup-subsolution}
 F(t,\cdot)\le\mathsf K_{t-s}F(s,\cdot).
\end{equation}
Also, it holds that $F(t,x)\le H(t,x)=(\mathsf K_t1)(x)$.
Apply the landmark construction to this boundary.  Before its first
positive landmark, $G_n=H$, so $F\le G_n$.  If this inequality holds
just before a landmark $s$, then for $x\le b(s)>-\infty$,
\[
 G_n(s+,x)=G_n(s,b(s))\ge F(s,b(s))=p(s)=F(s,x).
\]
For $x>b(s)$ the truncation changes nothing; if $b(s)=-\infty$
there is no truncation.  Between landmarks, by
\eqref{eq:inverse-semigroup-subsolution} and positivity of
$\mathsf K$, we have that the inequality is still valid.  Using Induction over the finitely many
landmarks in each compact time interval, we obtain that $F\le G_n$ everywhere.
Finally, by Lemma~\ref{lem:landmark}, we have
\[
 F(t,x)\le G^b(t,x),\qquad
 p(t)=\lim_{x\to-\infty}F(t,x)
       \le\Pbb(\tau_b\ge t)\le1.
\]
Letting $t\downarrow0$, we can draw the conclusion.  
\end{proof}

We now check that the variational-inequality solution \(F\) satisfies (D1)--(D4) for
this same boundary.

\begin{lemma}\label{lem:inverse-is-direct}
The function $F$ is a viscosity solution associated with $b$
on $[0,T]$.
\end{lemma}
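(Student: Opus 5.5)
We check (D1)--(D4) one at a time for $F=V$ and $b=\hat b$ on $[0,T]$, using Theorem~\ref{thm:wellposedness} and the lemmas of this subsection.

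Continuity of $F$ on $((0,T]\times(-\infty,0])\cup(\{0\}\times(-\infty,0))$ is Lemma~\ref{lem:V-continuity}. For (D1), the bounds $0\le p(t)H\le F\le H\le 1$ together with $F(t,0)=0$ and $F(0,x)=1$ are in Lemma~\ref{lem:corner-bounds} and Theorem~\ref{thm:wellposedness}. The uniform initial condition follows from \eqref{eq:corner-control}, or directly from
\[
0\le H(t,x)-F(t,x)\le (1-p(t))H(t,x)\le 1-p(t),
\]
which holds for every $x\le0$ and tends to $0$ as $t\downarrow0$ because $p$ is continuous with $p(0)=1$. For (D2), monotonicity in $x$ is Lemma~\ref{lem:V-contact-boundary}. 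That lemma also identifies $\{x:F(t,x)=p(t)\}=(-\infty,b(t)]$, so $F(t,\cdot)\equiv p(t)$ is constant below $b(t)$. When $b(t)=-\infty$, the constancy requirement is empty and we interpret $F(t,b(t))=\lim_{x\to-\infty}F(t,x)=p(t)$.

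For (D3), on $0<t\le T$ we have the identity $Q_b=\{F<p,\ x<0\}$. At any interior point of $Q_b$, the obstacle branch of \eqref{eq:VI-F} is strictly negative. The subsolution inequality of Proposition~\ref{prop:V-viscosity} then gives $\varphi_t-\frac12\varphi_{xx}\le0$. The supersolution inequality forces $\varphi_t-\frac12\varphi_{xx}\ge0$ because the obstacle term cannot be the active one. Together these say $F$ solves the heat equation in the viscosity sense in $Q_b$, which is exactly (D3); as in Lemma~\ref{lem:left-regular-boundary}, $F$ is then smooth there. For (D4), the subsolution half of Proposition~\ref{prop:V-viscosity} directly gives $\varphi_t-\frac12\varphi_{xx}\le \max\{\varphi_t-\frac12\varphi_{xx},F-p\}\le0$ at every interior touching point of $(0,T)\times(-\infty,0)$. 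This is the viscosity inequality $F_t-\frac12F_{xx}\le0$ on the whole domain.

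The step needing most care is the precise sense of (D4) on the stopping set. Definition~\ref{def:direct} asks for the inequality in the viscosity sense. The subsolution property of $V$ in Proposition~\ref{prop:V-viscosity} was proved for arbitrary interior points, not only in the continuation region, so it covers the contact set as well, including points where $\hat b$ is irregular. No extra argument is needed there; one only has to note that test functions touching from above are allowed at such points. Finally, the hypotheses used later in Proposition~\ref{prop:direct} must hold: $b=b^-$ is Lemma~\ref{lem:left-regular-boundary}, the bound $\sup_K b<0$ is \eqref{eq:boundary-away-zero}, and $\Pbb(\tau_b>0)=1$ is Lemma~\ref{lem:initial-survival}. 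This lets the uniqueness part of Proposition~\ref{prop:direct} be applied to conclude $F=G^b$.
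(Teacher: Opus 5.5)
Your proposal is correct and follows essentially the same route as the paper: the uniform initial condition comes from $0\le H-F\le(1-p(t))H\le 1-p(t)$, which is the direct inequality you give rather than \eqref{eq:corner-control}, since the latter only applies where $H\le p$. Beyond that, (D1)--(D2) come from Theorem~\ref{thm:wellposedness} and the definition of $b$, (D3) holds because $F<p$ on $Q_b$ makes the obstacle branch inactive, and (D4) is the subsolution inequality on all of $(0,T)\times(-\infty,0)$; your closing check of the hypotheses of Proposition~\ref{prop:direct} is not needed for this lemma but matches how the paper uses it in the proof of Theorem~\ref{thm:recovery_law}.
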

\begin{proof}
By the previously proved bounds, we have
\[
 0\le H(t,x)-F(t,x)\le(1-p(t))H(t,x)\le1-p(t),
\]
which proves the uniform initial condition.  The remaining conditions in (D1)--(D2) follow from
Theorem~\ref{thm:wellposedness} and the definition of \(b\). When \(b(t)=-\infty\), the stopping region at time \(t\) is empty,
so there is no constancy condition to check in (D2). In \(Q_b\), we have \(F<p\), so the variational inequality implies
\(F_t-\frac12F_{xx}=0\) in the viscosity sense, hence proves (D3).
Its subsolution inequality implies \(F_t-\frac12F_{xx}\le0\) on \((0,T)\times(-\infty,0)\), which proves (D4).
\end{proof}

\begin{proof}[Proof of Theorem~\ref{thm:recovery_law}]
For $T>0$ with $p>0$ on $[0,T]$, Lemmas
\ref{lem:left-regular-boundary} and \ref{lem:initial-survival}, together
with \eqref{eq:boundary-away-zero}, verify every boundary hypothesis of
Proposition~\ref{prop:direct}.  Lemma~\ref{lem:inverse-is-direct} and
uniqueness therefore yield
\[
 F(t,x)=G^b(t,x)=\Pbb(\tau_b\ge t,Y_t>x),\qquad0<t\le T.
\]
Letting $x\to-\infty$, we have that  $\Pbb(\tau_b\ge t)=p(t)$. Using the
absence of atoms, we get $\Pbb(\tau_b>t)=p(t)$.  The same equality at
$t=0$ comes from Lemma~\ref{lem:initial-survival}.
If $T_*:=\inf\{t:p(t)=0\}<\infty$, apply the preceding argument
for every $T<T_*$.  By the continuity of $p$, we have
\[
 \Pbb(\tau_b\ge T_*)=\lim_{t\uparrow T_*}\Pbb(\tau_b\ge t)
 =\lim_{t\uparrow T_*}p(t)=0.
\]
Both $p$ and $F$ vanish from $T_*$ onward, so all claimed identities
extend to these times as well.
\end{proof}

The preceding theorem provides the candidate stopping time directly:
\begin{equation}\label{eq:candidate-tau}
 \hat\tau=\tau_{\hat b}=\inf\{t>0:Y_t\le \hat b(t)\}\sim\mu.
\end{equation}

\section{Proof of the optimality} \label{sec:proof3}

\subsection{Discrete case}

Fix $0=t_0<t_1<\cdots<t_N=H$, choose thresholds
$b_1,\ldots,b_{N-1}\in[-\infty,0]$, set $b_N=0$, and define the stopping time.
\[
 \tau^\pi:=\min\{t_k:1\le k\le N,\ Y_{t_k}\le b_k\}.
\]
The condition $b_N=0$ forces every path to stop by $H$. Let us denote by $\mathcal{T}^\pi$ the set of stopping times that take values in $\{t_0,\dotso,t_N\}$. The following lemma proves that $\tau^\pi$ is a solution of an optimal stopping problem. The argument is inspired by the construction of the Snell envelop. 

\begin{lemma}\label{lem:grid}
The stopping time $\tau^\pi$ maximizes $\E[M_\sigma]$ over all stopping times $\sigma$ such that 
$ \sigma \in \mathcal{T}^\pi$ and $ \sigma  \sim \tau^\pi$.
\end{lemma}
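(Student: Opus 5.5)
The plan is to follow the Lagrangian (dual) heuristic of Remark~\ref{rmk:duality} in discrete time. I will construct deterministic multipliers $\psi_0,\dots,\psi_N\in\R$, indexed by the grid times, such that $\tau^\pi$ maximizes the \emph{unconstrained} discrete problem $\sup_{\sigma\in\mathcal T^\pi}\E[M_\sigma+\psi(\sigma)]$, where $\psi(t_k):=\psi_k$. Once this holds, any $\sigma\in\mathcal T^\pi$ with $\sigma\sim\tau^\pi$ satisfies $\E[\psi(\sigma)]=\E[\psi(\tau^\pi)]$, because $\psi$ is a bounded deterministic function of the stopping time. Subtracting this term from both sides gives $\E[M_\sigma]\le\E[M_{\tau^\pi}]$. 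Integrability is harmless, since $0\le M_\sigma\le M_H$ and $\E[M_H]<\infty$.

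\emph{Backward construction of $\psi$.} I use the Markov structure from Remark~\ref{rmk:duality}. Given $Y_{t_k}=y$, the increment $M_{t_{k+1}}-M_{t_k}$ and the next state $Y_{t_{k+1}}$ have the joint law of $\bigl(L^y_{\Delta_k},Y^y_{\Delta_k}\bigr)$, where $\Delta_k:=t_{k+1}-t_k$ and
\[
L^y_s=(y+W^*_s)^+,\qquad Y^y_s=y+W_s-L^y_s .
\]
I therefore look for a Snell envelope of the form $Z_k=M_{t_k}+v_k(Y_{t_k})$, with reduced value functions $v_k$ defined backward as follows.
\begin{itemize}
\item[$\circ$] Set $\psi_N:=0$ and $v_N:\equiv0$; since $b_N=0$, every path stops at $t_N=H$.
\item[$\circ$] For $k<N$, define the continuation value $c_k(y):=\E\bigl[L^y_{\Delta_k}+v_{k+1}(Y^y_{\Delta_k})\bigr]$.
\item[$\circ$] If $b_k$ is finite, set $\psi_k:=c_k(b_k)$. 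If $b_k=-\infty$, set $\psi_k:=\inf_{y\le0}c_k(y)=\lim_{y\to-\infty}c_k(y)$. This limit is finite because $c_k$ is bounded, as $v_{k+1}$ is bounded by induction.
\item[$\circ$] Set $v_k:=\max\{\psi_k,c_k\}$.
\end{itemize}
The key structural claim is that every $c_k$, and hence every $v_k$, is nondecreasing in $y$. I prove it by backward induction, driving all starting points $y$ with the same $W$. The maps $y\mapsto L^y_s$ and $y\mapsto Y^y_s$ are then pathwise nondecreasing, so monotonicity of $v_{k+1}$ gives monotonicity of $c_k$, and taking the maximum with a constant preserves it. Monotonicity then yields $c_k(y)\le\psi_k$ for $y\le b_k$ and $c_k(y)\ge\psi_k$ for $y\ge b_k$. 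Thus stopping is optimal on $\{y\le b_k\}$ and continuing is optimal on $\{y>b_k\}$; ties, where $c_k=\psi_k$, are harmless because either action attains $v_k$.

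\emph{Verification via the Snell envelope.} With $Z_k:=M_{t_k}+v_k(Y_{t_k})$, the recursion gives
\[
Z_k=\max\bigl\{M_{t_k}+\psi_k,\ \E[Z_{k+1}\mid\mathcal F_{t_k}]\bigr\}.
\]
This uses the strong Markov property of $(B,M)$ together with the decomposition $M_{t_{k+1}}=M_{t_k}+(M_{t_{k+1}}-M_{t_k})$. Hence $(Z_k)$ is a supermartingale dominating the reward $M_{t_k}+\psi_k$, so $\E[M_\sigma+\psi(\sigma)]\le Z_0$ for every $\sigma\in\mathcal T^\pi$. On the other hand, $(Z_{k\wedge\kappa})$ is a martingale, where $\kappa$ is the grid index with $\tau^\pi=t_\kappa$. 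Before $\tau^\pi$ we have $Y_{t_k}>b_k$, so $Z_k=\E[Z_{k+1}\mid\mathcal F_{t_k}]$; at $\tau^\pi$ we have $Y_{t_\kappa}\le b_\kappa$, so $Z_\kappa=M_{\tau^\pi}+\psi(\tau^\pi)$. The index $k=0$ never stops, which is consistent with the landmark convention starting at $k=1$ and with $Y_0=0$. This gives $\E[M_{\tau^\pi}+\psi(\tau^\pi)]=Z_0$ and completes the argument.

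The main obstacle is the monotonicity induction together with the careful handling of the edge thresholds. When $b_k=-\infty$, the choice $\psi_k=\inf c_k$ must make continuation optimal everywhere, which holds because then $c_k\ge\psi_k$ everywhere. When $b_k=0$ for some $k<N$, the choice $\psi_k=c_k(0)=\max c_k$ must make stopping optimal everywhere, which holds because then $c_k\le\psi_k$ everywhere. I would also check that $c_k$ needs no continuity: $\psi_k$ is chosen as the value at, or the limit toward, the threshold, and only monotonicity is used to compare $c_k$ with $\psi_k$ on each side.
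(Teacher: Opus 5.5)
Your proposal is correct and follows the paper's proof: the same backward Snell-envelope recursion, with your $c_k$ playing the role of the paper's $w_k$, multipliers $\psi_k=c_k(b_k)$ chosen in reverse order, and cancellation of $\E[\psi(\sigma)]=\E[\psi(\tau^\pi)]$ by equality of laws. Your version is a slightly more careful rendering of that argument: it uses only the weak monotonicity of $c_k$ obtained from the pathwise coupling, and it treats $b_k=-\infty$ and ties explicitly, whereas the paper appeals to strict monotonicity and continuity of $w_k$ without addressing those cases.
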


\begin{proof}
We claim that there exists a sequence of non-negative numbers $\psi_{t_0}, \dotso,\psi_{t_{N-1}}, \psi_{t_N}=0$, so that the stopping time $\tau^{\pi}$ solves the unconstrained optimal stopping problem 
\begin{align}\label{eq:unconstrained_OS}
    \max_{\sigma \in \mathcal{T}^{\pi}} \E [M_{\sigma}+\psi_{\sigma}].
\end{align}
More precisely, the Snell envelope of  $\bar M_{t_k}:= M_{t_k} + \psi_{t_k}$, defined by
\begin{align*}
    S_{t_k}:= \esssup_{\sigma \in \mathcal{T}^\pi, \, \sigma \geq t_k} \mathbb E[\bar M_{\sigma} \mid \mathcal{F}_{t_k}], \quad k=0,\dotso, N,
\end{align*}
is a super-martingale.  The hitting time 
\begin{align*}
\tau:=\inf\{k \in \{0,\dotso,N\} : \, S_{t_k}=\bar M_{t_k} \}
\end{align*}
solves the optimal stopping problem \eqref{eq:unconstrained_OS}, 
and the stopped process $\bar M^{\tau}_{t_k}$ is a martingale. We will construct a sequence of $\{\psi_{t_k}\}_{k \in 0,\dotso, N\}}$ such that $\tau^\pi=\tau$. Therefore according to the optional stopping theorem, for any $\sigma \in \mathcal{T}^\pi$ with $\sigma \sim \tau^\pi$
\begin{align*}
 \E[ M_{\tau^\pi}+\psi_{\tau^\pi}]=\E [M_{t_0}+\psi_{t_0}] \geq \E [M_{\sigma}+\psi_{\sigma}].
\end{align*}
Since $\tau^\pi \sim \sigma$, it yields $\E[M_{\tau^\pi} ]\geq \E[M_{\sigma}]$, which proves the lemma. In the rest, we prove the existence of such $\psi_{t_0},\dotso, \psi_{t_N}=0$ that $\tau^\pi$ solves \eqref{eq:unconstrained_OS}.

For an arbitrary sequence of non negative numbers $\psi_{t_0}, \dotso, \psi_{t_{N-1}}, \psi_{t_N}=0$, the Snell envelope of \eqref{eq:unconstrained_OS} can be constructed iteratively 
\begin{align*}
    S_{t_N}&=\bar M_{t_N}=M_{t_N}, \\
    S_{t_k}&=\max \{\bar M_{t_k}, \E[S_{t_{k+1}} \mid \mathcal{F}_{t_k}] \}=\max \{M_{t_k}+\psi_{t_k}, \E[S_{t_{k+1}} \mid \mathcal{F}_{t_k}] \}, \quad k=N-1,\dotso, 0. 
\end{align*}
Let $(W_t)$ be a Brownian motion with $W_0=0$ that is independent of $(B_t)$. For any $k \in \{0,\dotso,N-1\}$ and $y \leq 0$, let us define 
\begin{align}\label{eq:conditional_maximum_increment}
    L_k(y):=\mathbb E\left[ \left(y+ W^*_{t_{k+1}-t_{k}}\right)^+ \right],
\end{align}
which is clearly a positive strictly increasing continuous function. By the Markovian property of $(Y_t,M_t)$, it can be easily verified that 
$\E[M_{t_{k+1}} \mid \mathcal{F}_{t_k}]= M_{t_k}+L_k(Y_{t_k})$.

Let us prove by backward induction that for $k=N-1,\dotso, 0$, there exists a strictly increasing continuous function $w_k:(-\infty, 0] \to (0,\infty)$ such that 
\begin{align}\label{eq:induction}
\mathbb E[S_{t_{k+1}} | \, \mathcal{F}_{t_k}]=M_{t_k}+w_k(Y_{t_k}).
\end{align}
For $k=N-1$, we simply take $w_{N-1}=L_{N-1}$, which satisfies the property due to the explicit formula \eqref{eq:conditional_maximum_increment}. Suppose \eqref{eq:induction} holds for $k$, and let us prove it for $k-1$. According to the induction hypothesis and the construction of the Snell envelope, 
\begin{align*}
    \mathbb E\left[S_{t_k} | \, \mathcal{F}_{t_{k-1}} \right]& = \E \left[\max\{M_{t_k}+\psi_{t_k}, M_{t_k}+w_k(Y_{t_k})\}    \,  | \, \mathcal{F}_{t_{k-1}} \right] \\
    & = \E[M_{t_k}  \,| \, \mathcal{F}_{t_{k-1}}]+ \E[\max \{\psi_{t_k},w_k(Y_{t_k}) \} | \, \mathcal{F}_{t_{k-1}}] \\
    &= M_{t_{k-1}}+L_{k-1}(Y_{t_{k-1}})+ \E[\max \{\psi_{t_k},w_k(Y_{t_k}) \} | \, \mathcal{F}_{t_{k-1}}].
\end{align*}
Note that $Y_{t_k}$ and $\mathcal{F}_{t_{k-1}}$ are conditionally independent given $Y_{t_{k-1}}$. Therefore we have  
\begin{align*}
\hat L_{k-1}(y):=&\E\left[\max \left\{\psi_{t_k},w_k\left(y+W_{t_k-t_{k-1}} -(y+W^*_{t_k-t_{k-1}}\right)^+  )  \right\} \right], \, \, \,  y\leq 0, \\
  \hat L_{k-1}(Y_{t_{k-1}})=&  \E[\max \{\psi_{t_k},w_k(Y_{t_k}) \} | \, \mathcal{F}_{t_{k-1}}].
\end{align*}
From the explicit formula, $\hat L_{k-1}: \mathbb R_{-} \to [0,\infty]$ is continuous non-decreasing function. Hence  $w_{k-1}(y):=L_{k-1}(y)+\hat L_{k-1}(y)$ is positive, strictly increasing, and continuous. So that $\mathbb E\left[S_{t_k} | \, \mathcal{F}_{t_{k-1}} \right]=M_{t_{k-1}}+w_{k-1}(Y_{t_{k-1}})$ completes the induction. 

By the classical theory of optimal stopping, the optimal stopping 
\begin{align*}
    \hat \tau:= \inf \{t_k : \,M_{t_k}+\psi_{t_k} \geq S_{t_k}\}
\end{align*}
solves \eqref{eq:unconstrained_OS}. It remains to choose $\psi_{t_1}, \dotso, \psi_{t_{N-1}}, \psi_{t_N}$ so that $\hat \tau=\tau^\pi$. Note that $M_{t_k} + \psi_{t_k} \geq S_{t_k}$ is equivalent to $\psi_{t_k} \geq w_k(Y_{t_k})$. As the construction of $w_{k-1}$ depends on $\psi_{t_k}$ and $w_k$, we choose $\psi_{t_{N-1}}=w_{N_1}(b_{N-1}), \dotso, \psi_{t_k}=w_k(b_k)$ in reverse order. Then as $w_k$ is continuous strictly increasing, $\psi_{t_k} \geq w_k(Y_{t_k})$ is equivalent to $Y_{t_k} \leq b_k$, which completes the proof. 
\end{proof}

\begin{remark}[Relation to martingale and geometric methods]
The preceding proof establishes optimality using a discrete-time Snell envelope, in the spirit of the martingale arguments for optimal Skorokhod embedding in \cite{COT} and the dual approach of \cite{GHLT}. The key distinction lies in the constraint: prescribing the distribution of the stopped position, $B_\sigma \sim \mu$, fixes the expectation of the Lagrange multiplier term $\lambda(B_\sigma)$, whereas prescribing the stopping-time distribution, $\sigma \sim \mu$, fixes the expectation of $\psi(\sigma)$. The latter observation allows us to transfer optimality for the modified reward $M_\sigma+\psi(\sigma)$ to the original constrained problem.

In the next subsection, we extend optimality to continuous time using convergence of the stopping times and uniform integrability,
without requiring convergence of the functions $\psi$.  This also differs from the geometric approach of \cite{BEES2018},
which uses a monotonicity principle and the exclusion of stop-go pairs to identify the structure of optimizers. Here the candidate
boundary is constructed separately and its optimality is then verified directly.
\end{remark}

\subsection{Passage to the rough barrier}

Fix $T>0$, put $\theta:=\hat\tau\wedge T$, and let
\[
 g(t):=\Pbb(\theta>t),\qquad
 H:=\sup\{t:g(t)>0\}\le T.
\]
Atomlessness of $\mu$ implies $\theta>0$ almost surely, so $H>0$.  Define the
capped boundary
\[
 b^T(t):=
 \begin{cases}
  \hat b(t),&0<t<H,\\
  0,&t=0\ \text{or}\ t\ge H.
 \end{cases}
\]
The boundary $b^T$ is upper semicontinuous in $t$, and its hitting time equals to $\theta$ almost surely.
Indeed, if $H=T$, the condition $b^T(T)=0$ stops every path that has survived
until $T$.  If $H<T$, right-continuity of $g$ and the atomlessness of $\mu$ imply $\hat \tau < H$  $a.s.$, and hence $\theta=\hat \tau$ equals to the hitting time of $b^T$. 

For each $n\ge1$, let $\Delta_n:=H2^{-n}$ and define the dyadic grid
$\{t_k^n=k\Delta_n:0\le k\le2^n\}$. Choose  the deterministic thresholds
$b_k^n\in[-\infty,0)$ recursively, so that
\[
\Pbb(Y_{t^n_1}>b_1^n)=g(t_1^n), \quad  \Pbb\left(Y_{t_k^n}>b_k^n\mid
 Y_{t_j^n}>b_j^n,\ 1\le j<k\right)
 =\frac{g(t_k^n)}{g(t_{k-1}^n)}, \, \,  \, k=1,\dotso, 2^n-1,
\]
and put $b_{2^n}^n=0$.  The existence of such $b_k^n$ is implied by the fact that 
\[ [-\infty,0) \ni y \mapsto  \Pbb\left(Y_{t_k^n}>y\mid
 Y_{t_j^n}>b_j^n,\ 1\le j<k\right) \in (0,1]\]
is continuous and decreasing. 
Define the deterministic boundary $b_n:[0,\infty]\to[-\infty,0]$ by
\[
 b_n(t):=\begin{cases}
 b_k^n,&t=t_k^n,\quad 1\le k<2^n,\\
 -\infty,&0<t<H,\quad t\notin\pi_n,\\
 0,&t=0\ \text{or}\ H\le t\le\infty.
 \end{cases}
\]
This boundary $b_n(t)$ is upper semicontinuous, and its hitting time is
\begin{align}\label{eq:tau_n}
 \tau_n:=\inf\{t>0:Y_t\le b_n(t)\}
       =\min\{t_k^n:1\le k\le2^n,\ Y_{t_k^n}\le b_k^n\}.
\end{align}

Since $0<\theta\le H$ almost surely, define
\[
 \theta_n:=\min\{t_k^n:1\le k\le2^n,\ \theta\le t_k^n\}
          =\Delta_n\left\lceil\frac{\theta}{\Delta_n}\right\rceil
          \quad\text{almost surely}.
\]
By Induction in $k$, we have
\[
 \Pbb(\tau_n>t_k^n)=g(t_k^n)=\Pbb(\theta_n>t_k^n),
 \qquad 0\le k<2^n,
\]
and hence
$\tau_n\stackrel d=\theta_n$.  As
$\theta_n \to \theta$ almost surely, $\theta_n$ and $\tau_n$ converge weakly to $\theta$.

Now the key to proving Theorem~\ref{thm:main} is the following stability property of inverse first-passage time problem. 
\begin{lemma}
    With $\tau_n$ defined in \eqref{eq:tau_n} and $\theta=\hat \tau \wedge T$, it holds that $\tau_n\to \theta$ in probability.
\end{lemma}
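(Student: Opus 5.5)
The plan is a compactness-plus-identification argument: every subsequence of $(\tau_n)$ should have a further subsequence converging almost surely to $\theta$, which gives convergence in probability. Write $\mathcal R_n:=\{(t,y)\in[0,\infty)\times(-\infty,0]:y\le b_n(t)\}$ for the closed stopping regions. Each $\mathcal R_n$ is closed because $b_n$ is upper semicontinuous, and it is "downward closed" in $y$. By compactness of the space of closed sets for Kuratowski convergence (on the compactification $[0,\infty]\times[-\infty,0]$), we extract a subsequence with $\mathcal R_n\to\mathcal R$. The limit $\mathcal R$ is again closed and downward closed, so $\mathcal R=\{y\le\bar b(t)\}$ for some upper semicontinuous $\bar b:(0,\infty)\to[-\infty,0]$. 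Moreover $\bar b=0$ on $[H,\infty)$.

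\textbf{Step 1 (pathwise sandwich).} We show, for every continuous path,
\[
\tau_{\bar b}\le\liminf_n\tau_n\le\limsup_n\tau_n\le\tau^\circ_{\bar b}.
\]
For the lower bound, take $s_n=\tau_n$ along a further subsequence converging to $s$. Then $(s_n,Y_{s_n})\in\mathcal R_n$, so $(s,Y_s)$ lies in the upper Kuratowski limit $\mathcal R$, and hence $\tau_{\bar b}\le s$. Some care is needed at $s=0$: there we use that the landmarks are positive and that $\theta>0$ almost surely.

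For the upper bound, suppose $Y_r<\bar b(r)$. By the lower Kuratowski limit there are points $(s_n,y_n)\in\mathcal R_n$ with $(s_n,y_n)\to(r,\bar b(r))$. By continuity, $Y_{s_n}<y_n\le b_n(s_n)$ for large $n$, so $\tau_n\le s_n\to r$.

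Proposition~\ref{prop:closed-strict-crossing} gives $\tau_{\bar b}=\tau^\circ_{\bar b}$ almost surely. Therefore $\tau_n\to\tau_{\bar b}$ almost surely along the subsequence. Since $\tau_n\stackrel d=\theta_n\Rightarrow\theta$, we also get $\tau_{\bar b}\stackrel d=\theta$.

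\textbf{Step 2 (identification $\tau_{\bar b}=\theta$ a.s.).} This is the main obstacle: we must show that a barrier hitting time of $Y$ is determined by its law. My first attempt uses the variational inequality on $[0,T']$ for each $T'<H$, where $g=p$ is continuous, positive and nonincreasing. The goal is to show $G^{\bar b}$ is an admissible viscosity solution of \eqref{eq:VI-F}--\eqref{eq:VI-data} with obstacle $g$. Then Proposition~\ref{prop:V-comparison} gives $G^{\bar b}=V=G^{\hat b}$ (the latter by Theorem~\ref{thm:recovery_law}), and therefore $\bar b=\hat b=b^T$ on $(0,H)$ and $\tau_{\bar b}=\theta$ almost surely.

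\emph{Bounds and subsolution.} The bounds in Definition~\ref{def:viscosity} follow exactly as in \eqref{eq:direct-initial-uniform}. The heat-equation subsolution inequality and the equation in $Q_{\bar b}$ come from passing to the limit in \eqref{eq:landmark-heat} for the discrete functions $G_n$, using $G_n\to G^{\bar b}$ from the almost sure convergence in Step 1. The obstacle inequality $G^{\bar b}\le g$ is immediate from $\Pbb(\tau_{\bar b}>t)=g(t)$.

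\emph{Supersolution.} The delicate part is the supersolution property on the contact set. This requires $G^{\bar b}(t,x)=g(t)$ for $x\le\bar b(t)$, which in turn needs $\Pbb(\tau_{\bar b}=t)=0$. That holds because $\tau_{\bar b}\stackrel d=\theta$ and $\mu$ is atomless on $(0,H)$. It also requires the contact set to coincide with $\{x\le\bar b(t)\}$, where I would reuse the strict positivity of $-\partial_xG$ in $Q_{\bar b}$ from Lemma~\ref{lem:left-regular-boundary}.

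\emph{Fallback.} If the viscosity route stalls, I would argue pathwise with the two barriers $\bar b\vee b^T$ and $\bar b\wedge b^T$. Their hitting times are $\tau_{\bar b}\wedge\theta$ and at least $\tau_{\bar b}\vee\theta$. Comparing $G$-functions via the landmark induction of Lemma~\ref{lem:initial-survival} should show that the minimum already has law $g$. Then $\tau_{\bar b}\wedge\theta$ and $\tau_{\bar b}$ have the same law while one dominates the other, which forces equality almost surely; the same applies to $\theta$.

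\textbf{Step 3 (conclusion).} The limit $\theta$ does not depend on the subsequence. Since every subsequence of $(\tau_n)$ has a further subsequence converging almost surely to $\theta$, we conclude $\tau_n\to\theta$ in probability.
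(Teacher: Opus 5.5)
Your argument is correct in outline, and it has the same skeleton as the paper's. The paper's proof is a two-line import of two results from \cite{KlumpKolb2023}:
\begin{itemize}
\item Lemma~4.4 there: weak convergence $\tau_n\Rightarrow\theta$ forces Hausdorff convergence of the barriers $b_n$ to $b^T$. This is where uniqueness of the inverse first-passage barrier enters.
\item Lemma~3.13(ii) there: Hausdorff convergence of barriers gives convergence in probability of the hitting times.
\end{itemize}
Your Step~1 (Kuratowski compactness plus the pathwise sandwich, closed by Proposition~\ref{prop:closed-strict-crossing}) is an in-house version of the second result. Your Step~2 genuinely differs: it replaces the cited uniqueness by the paper's own machinery, namely the comparison principle of Proposition~\ref{prop:V-comparison} together with Theorem~\ref{thm:recovery_law}. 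The key new observation is simple and correct. Because $\tau_{\bar b}$ has no atom at $t\in(0,H)$, one has $\Pbb(\tau_{\bar b}\ge t,\,Y_t\le\bar b(t))\le\Pbb(\tau_{\bar b}=t)=0$. Hence $G^{\bar b}=g$ on $\{x\le\bar b(t)\}$, so $G^{\bar b}<g$ only inside $Q_{\bar b}$, where it is caloric, and $G^{\bar b}$ is a viscosity solution of the variational inequality. The paper's route is much shorter and yields full Hausdorff convergence $b_n\to b^T$. However, it leaves implicit how results formulated for Brownian motion transfer to the reflected process $Y$. Yours is longer but uses only results proved in the paper and works directly for $Y$.

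A few points need tightening.
\begin{itemize}
\item \emph{The case $s=0$.} Here you get convergence in probability along the subsequence, not almost sure convergence. The pathwise lower bound shows that $\{\delta\le\tau_n<\tau_{\bar b}-\epsilon\}$ occurs only finitely often. Also $\limsup_n\Pbb(\tau_n<\delta)\le\Pbb(\theta\le\delta)\to0$ as $\delta\downarrow0$. Together with the pathwise upper bound, this is enough for Step~3.
\item \emph{The admissibility bounds.} The lower bound $gH\le G^{\bar b}$ does not follow from \eqref{eq:direct-initial-uniform}, which only gives $G^{\bar b}\ge H-(1-g)$. (The stronger bound does hold, by positive association of reflected Brownian motion.) The weaker bound is all the comparison proof actually uses, since it needs only $u,v\le\min\{p,H\}$ and $u-v\le1-p(\varepsilon)$ at $t=\varepsilon$.
\item \emph{Localization.} The comparison must be run on $(0,T')$ with $T'<H$, because $g$ and $p$ may differ after $H$. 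The doubling-of-variables proof is local in time, so this causes no difficulty.
\item \emph{Identification.} You only need the inclusion above. With $G^{\bar b}=V$ it gives $\bar b\le b^T$, hence $\tau_{\bar b}\ge\theta$ pathwise. Equality in law then forces $\tau_{\bar b}=\theta$ almost surely. So neither the strict positivity of $-\partial_xG^{\bar b}$ nor your fallback argument is needed.
\end{itemize}
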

This lemma can be deduced from \cite{KlumpKolb2023}. Indeed, using the fact that $\tau_n$ converge weakly to $\theta$, and applying \cite[Lemma 4.4]{KlumpKolb2023} to the corresponding inverse first-passage boundaries yields the Hausdorff convergence of $b_n$ to $b^T$. Since both $b_n$ and $b^T$ are set equal to $0$ on $[H, \infty)$, this extends to the full boundaries. Hence,  \cite[Lemma 3.13(ii)]{KlumpKolb2023} then implies
\[
    \tau_n \to \theta
    \qquad \text{in probability}.
\]

\begin{proof}[Proof of Theorem~\ref{thm:main}]
Let $\rho$ be any stopping time with distribution $\mu$. We prove that $\E[M_{\rho}] \leq \E[M_{\hat \tau}]$. 

Fix $T>0$ and set $\sigma:=\rho\wedge T$.  Then
$\sigma\stackrel d=\theta$ and $\sigma\le H$ almost surely. For each $n$, defining the stopping time
\[  \sigma_n:=\min\{t_k^n:1\le k\le2^n,\ \sigma\le t_k^n\},
          \]
we have that 
$\sigma_n\stackrel d=\theta_n\stackrel d=\tau_n$.  Lemma~\ref{lem:grid}
therefore implies
\[
 \E[M_{\sigma_n}]\le\E[M_{\tau_n}].
\]
As $\sigma_n \to \sigma$ almost surely, continuity
of $M$ and domination by $M_T\in L^1$ imply
$M_{\sigma_n}\to M_\sigma$ in $L^1$.  Likewise,
$\tau_n\to\theta$ in probability and uniform continuity of every path of
$M$ on $[0,T]$ imply that $M_{\tau_n}\to M_\theta$ in probability.  Since
$0\le M_{\tau_n}\le M_T$ with $M_T \in L^1$, the family $(M_{\tau_n})_n$ is uniform integrability. Hence, by Vitali's convergence
theorem, $M_{\tau_n} \to M_{\theta}$ in $L^1$.  Passing to the limit yields
\[
 \E[M_{\rho\wedge T}]\le\E[M_{\hat\tau\wedge T}].
\]
Finally let $T\to \infty$.  Since
$M_{\rho\wedge T}\to M_\rho$ and
$M_{\hat\tau\wedge T}\to M_{\hat\tau}$, monotone convergence
proves the result, with both expectations interpreted in $[0,\infty]$.  
\end{proof}

\end{document}